\definecolor{VeryLightBlue}{rgb}{0.9,0.9,1}
\definecolor{LightBlue}{rgb}{0.8,0.8,1}
\definecolor{MidBlue}{rgb}{0.5,0.5,1}
\definecolor{DarkBlue}{rgb}{0,0,0.6}
\definecolor{Blue}{rgb}{0,0,1}
\definecolor{Gold}{rgb}{1,0.843,0}
\definecolor{LightGreen}{rgb}{0.88,1,0.88}
\definecolor{MidGreen}{rgb}{0.6,1,0.6}
\definecolor{DarkGreen}{rgb}{0,0.6,0}
\definecolor{VeryLightYellow}{rgb}{1,1,0.9}
\definecolor{LightYellow}{rgb}{1,1,0.6}
\definecolor{MidYellow}{rgb}{1,1,0.5}
\definecolor{DarkYellow}{rgb}{1,1,0.2}
\definecolor{DarkPurple}{rgb}{.6,0,1}
\definecolor{Red}{rgb}{1,0,0}
\definecolor{VeryLightRed}{rgb}{1,0.9,0.9}
\definecolor{LightRed}{rgb}{1,0.8,0.8}
\definecolor{MidRed}{rgb}{1,0.55,0.55}
\long\def\delete#1{}
\newtheorem{theorem}{Theorem}[section]
\newtheorem{lemma}[theorem]{Lemma}
\theoremstyle{definition}
\newtheorem{example}{Example}[section]
\newcommand{\be}{\begin{equation}}
\newcommand{\ee}{\end{equation}}
\newcommand{\bea}{\begin{eqnarray}}
\newcommand{\eea}{\end{eqnarray}}
\newcommand{\bean}{\begin{eqnarray*}}
\newcommand{\eean}{\end{eqnarray*}}
\def\qed{\hfill$\Box$\vspace{11pt}}
\def\la{\langle}
\def\ra{\rangle}
\def\ZZZ{\mathbb{Z}}
\def\PP{{\cal P}}
\def\b0{{\bf 0}}
\def\Ga{\Gamma}
\def\Si{\Sigma}
\def\b{\beta}
\def\l{\lambda}
\def\z{\zeta}
\def\Cay{{\rm Cay}}
\def\SL{{\rm SL}}
\def\Cay{{\rm Cay}}
\begin{document}
\openup 0.3\jot

\title{Perfect codes in Cayley graphs of abelian groups}

\delete
{
\author[1]{Peter J. Cameron \thanks{E-mail: \texttt{pjc20@st-andrews.ac.uk}}}
\author[2]{Roro Sihui Yap\thanks{E-mail: \texttt{roro\_@outlook.sg}}}
\author[2]{Sanming Zhou\thanks{E-mail: \texttt{sanming@unimelb.edu.au}}}
\affil[1]{\small School of Mathematics and Statistics, University of St Andrews, St Andrews, Fife KY16 9SS, UK}
\affil[2]{\small School of Mathematics and Statistics, The University of Melbourne, Parkville, VIC 3010, Australia}
}

\author{\renewcommand{\thefootnote}{\arabic{footnote}}Peter J. Cameron\footnotemark[1] , Roro Sihui Yap\footnotemark[2] , Sanming Zhou\footnotemark[2]}

\footnotetext[1]{School of Mathematics and Statistics, University of St Andrews, St Andrews, Fife KY16 9SS, UK}

\footnotetext[2]{School of Mathematics and Statistics, The University of Melbourne, Parkville, VIC 3010, Australia}

\renewcommand{\thefootnote}{}
\footnotetext{{\em E--mail addresses}: \texttt{pjc20@st-andrews.ac.uk} (Peter J. Cameron), \texttt{roro\_@outlook.sg} (Roro Sihui Yap), \texttt{sanming@unimelb.edu.au} (Sanming Zhou)}

\date{}

\maketitle

\begin{abstract}
A perfect code in a graph $\Ga = (V, E)$ is a subset $C$ of $V$ such that no two vertices in $C$ are adjacent and every vertex in $V \setminus C$ is adjacent to exactly one vertex in $C$. A total perfect code in $\Ga$ is a subset $C$ of $V$ such that every vertex of $\Ga$ is adjacent to exactly one vertex in $C$. In this paper we prove several results on perfect codes and total perfect codes in Cayley graphs of finite abelian groups. 

{\em Key words}: perfect code; total perfect code; efficient dominating set; tiling of finite groups; Cayley graph

{\em AMS Subject Classification (2010)}: 05C25, 05C69, 94B25 
\end{abstract}


\section{Introduction}
\label{sec:int}

Perfect codes have long been important objects of study in information theory \cite{Heden, vanLint}. Examples of perfect codes include the well-known Hamming and Golay codes. The notion of perfect codes can be generalized to graphs \cite{Biggs73, Kra} in such a way that $q$-ary perfect $t$-codes of length $n$ under the Hamming metric are precisely perfect $t$-codes in the Hamming graph $H(n, q)$ and those under the Lee metric are exactly perfect $t$-codes in the Cartesian product $L(n, q)$ of $n$ cycles of length $q$. Since both $H(n, q)$ and $L(n, q)$ are Cayley graphs, perfect codes in Cayley graphs can be viewed as a generalization of perfect codes in the classical setting. As such there has been considerable interest in studying perfect codes in Cayley graphs; see, for example, \cite{DS03, DSLW16, E87, FHZ17, HXZ, L01, MBG07, T04, Z15, Z16}. In this paper we focus on  perfect codes in Cayley graphs of abelian groups. 
 
All graphs considered in this paper are finite, undirected and simple, and all groups considered are finite. 
Let $\Ga = (V, E)$ be a graph and $t \ge 1$ an integer. The \emph{ball} with radius $t$ and centre $u \in V$ (that is,  the closed $t$-neighbourhood of $u$ in $\Ga$) is the set of vertices of $\Ga$ with distance at most $t$ to $u$ in $\Ga$. A subset $C$ of $V$ is called \cite{Biggs73, Kra} a \emph{perfect $t$-code} in $\Ga$ if the balls with radius $t$ and centres in $C$ form a partition of $V$. It is readily seen that $q$-ary perfect $t$-codes of length $n$ under the Hamming or Lee metric are precisely perfect $t$-codes in $H(n, q)$ or $L(n, q)$, respectively. In graph theory, perfect $1$-codes in graphs have long been studied under the name of efficient dominating sets \cite{DS03} and independent perfect dominating sets \cite{L01}. Since we only consider perfect $1$-codes in this paper, from now on we simply call them \emph{perfect codes}. A subset $C$ of $V$ is called \cite{AD14, CHKS, D08, GHT, KPY} a \emph{total perfect code} in $\Ga$ if every vertex of $\Ga$ has exactly one neighbour in $C$. (Thus, the subgraph of $\Ga$ induced by a total perfect code $C$ is a matching and so $|C|$ must be even.) As noticed in \cite{Z16}, total perfect codes in $L(n, q)$ and in the infinite $n$-dimensional grid graph (which gives rise to the Manhattan distance) are exactly diameter perfect codes \cite{AAK01, Etzion11} with minimum distance $4$. Also, perfect codes and total perfect codes in regular graphs are essentially two special kinds of equitable partitions of the graph into two parts (see, for example, \cite{WXZ22}).  

Given a group $G$ with identity element $e$ and an inverse-closed subset $S$ of $G \setminus \{e\}$, the \emph{Cayley graph} $\Cay(G, S)$ of $G$ with respect to the \emph{connection set} $S$ is the graph with vertex set $G$ such that $x, y \in G$ are adjacent if and only if $yx^{-1} \in S$. In particular, Cayley graphs of cyclic groups are called \emph{circulant graphs}. Clearly, $\Cay(G, S)$ is connected if and only if $S$ is a generating set of $G$. As mentioned above, perfect codes in Cayley graphs can be considered as a generalization of perfect codes in coding theory. They are also closely related to factorizations and tilings of groups, where a \emph{factorization} \cite{SS} of a group $G$ into $n$ factors (where $n \ge 2$) is an $n$-tuple of subsets $(A_1, \ldots, A_n)$ of $G$ such that each element of $G$ can be uniquely expressed as $a_1 \cdots a_n$ with $a_i \in A_i$ for $1 \le i \le n$, and a \emph{tiling} \cite{Dinitz06} of $G$ is a factorization $(A, B)$ of $G$ with $e \in A \cap B$. The study of factorizations and tilings of groups has a long history (see, for example, \cite{Dinitz06,RT1966,Sands62,Szab2006,SS}). In particular, G. Haj\'{o}s \cite{Hajos1942} proved that, for any factorization $(A_1, \ldots, A_n)$ of an abelian group $G$ with each $A_i$ an arithmetic progression, at least one $A_j$ must be a coset of a subgroup of $G$. It can be easily verified that $(A, B)$ is a tiling of a group $G$ such that $A$ is inverse-closed if and only if $B$ is a perfect code in $\Cay(G, A \setminus \{e\})$ with $e \in B$. Thus, the study of perfect codes in Cayley graphs is essentially the study of tilings of groups. From the perspective of perfect codes, the basic problems in this area of study are the existence, classification and characterization of perfect codes in a given Cayley graph, or equivalently that of the ``complement'' $B$ of a given inverse-closed subset $A$ containing $e$ such that $(A, B)$ is a tiling of $G$. 

Perfect codes in Cayley graphs of abelian groups deserve special attention due to the following reasons: (i) in the study of factorizations and tilings, abelian groups are the original and most important case; (ii) since any finite abelian group is of the form $G = \mathbb{Z}_{q_1} \times \cdots \times \mathbb{Z}_{q_n}$, the elements of a perfect code in a Cayley graph of $G$ are codewords in the usual sense with $i$-th bit from the alphabet $\mathbb{Z}_{q_i}$, and such a perfect code is a mixed perfect code \cite{Heden} if not all $q_1, \ldots, q_n$ are equal. Perfect codes in circulant graphs have received considerable attention in recent years \cite{YPD14, KM13, OPR07, Z15}, but no classification of such perfect codes has been achieved even in this innocent-looking case. In this paper we study perfect codes in Cayley graphs of abelian groups. Our main results are as follows. In Section \ref{sec:good}, we prove that perfect codes in Cayley graphs of good abelian groups with degree $p-1$ are cosets of a subgroup (Theorem \ref{thm:APCsubgp}), where $p$ is a prime. We also obtain the counterpart of this result for total perfect codes (Theorem \ref{thm:ATPCsubgp}). In Section \ref{sec:abelian}, we give a necessary and sufficient condition for the existence of perfect codes in Cayley graphs of abelian groups with degree $p^i - 1$ (Theorems \ref{thm:APCforP} and \ref{thm:APCforPl}), where $i$ is $1$ or the exponent of $p$ in the order of the group involved. In Section \ref{sec:abelian}, we also obtain a similar necessary and sufficient condition for total perfect codes (Theorems \ref{thm:ATPCforP} and \ref{thm:ATPCforPl}). These four results in Section \ref{sec:abelian} generalize the main results in \cite{FHZ17} from cyclic groups to general abelian groups. In addition, in Section \ref{sec:abelian} we give a lemma (Lemma \ref{lem:redn}) which reduces the existence of perfect codes in a Cayley graph $\Cay(G, S)$ of an abelian group $G$ to the case when $S \cup \{0\}$ is aperiodic ( where $0$ is the identity element of $G$), and use this reduction  to prove a few known results. In Section \ref{sec:tpc}, we prove several results on total perfect codes in circulant graphs (see Table \ref{tab2} for a summary) which are parallel to some existing results on perfect codes in circulant graphs.


\section{Preliminaries}
\label{sec:lem}

The following lemma gives equivalent definitions of perfect codes and total perfect codes in Cayley graphs in terms of factorizations and tilings (see, for example, \cite{YPD14, HXZ}).   

\begin{lemma}
\label{lem:DirP}
Let $G$ be a group, $S$ an inverse-closed subset of $G \setminus \{e\}$, and $C$ a subset of $G$.  
\begin{itemize} 
\item[\rm (a)] $C$ is a perfect code in $\Cay(G,S)$ if and only if $(S \cup \{e\}, C)$ is a factorization of $G$ (see \cite{YPD14});
\item[\rm (b)] $C$ is a total perfect code in $\Cay(G,S)$ if and only if $(S, C)$ is a factorization of $G$.
\end{itemize}
\end{lemma}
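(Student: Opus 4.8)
The plan is simply to unwind both definitions and read them off as unique-factorisation statements, using that adjacency in $\Cay(G,S)$ is controlled by left multiplication by $S$. The first step is to record that, for any $c \in G$, the (open) neighbourhood of $c$ in $\Cay(G,S)$ is $Sc = \{sc : s \in S\}$, since $y$ is adjacent to $c$ exactly when $yc^{-1} \in S$, i.e.\ when $y = sc$ for some $s \in S$; as $e \notin S$, the ball of radius $1$ (the closed neighbourhood) centred at $c$ is therefore $(S \cup \{e\})c$, a set of $|S|+1$ elements.

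For part (b): by definition $C$ is a total perfect code in $\Cay(G,S)$ iff every $g \in G$ has exactly one neighbour in $C$. By the previous paragraph this says that for every $g \in G$ there is exactly one pair $(s,c) \in S \times C$ with $g = sc$, which is precisely the statement that $(S,C)$ is a factorization of $G$. Nothing more is needed here.

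For part (a): $C$ is a perfect code iff the balls $(S\cup\{e\})c$, $c \in C$, form a partition of $G$, i.e.\ iff every $g \in G$ is expressible as $g = ac$ with $a \in S \cup \{e\}$ and $c \in C$, and this expression is unique; that is exactly the assertion that $(S \cup \{e\}, C)$ is a factorization of $G$. If one prefers the ``independent dominating set'' formulation of a perfect code stated in the introduction, I would add the short observation that the ``covering'' half of the partition condition says every vertex either lies in $C$ or has a neighbour in $C$, while the ``packing'' half forces $C$ to be independent (two adjacent elements of $C$ would each lie in the other's ball) and forces every vertex outside $C$ to have at most one neighbour in $C$; the two descriptions of a perfect code are thus equivalent, and either may be used.

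The argument uses nothing about $G$ beyond the group axioms, so it holds for arbitrary finite groups. There is no real obstacle: the only points requiring care are the side on which $S$ acts — with the convention $yx^{-1}\in S$ used here one obtains the left translates $(S\cup\{e\})c$, and hence the ordered factorization with $S\cup\{e\}$ (resp.\ $S$) as first factor and $C$ as second factor — together with the routine verification that the ``partition of balls'' definition of a perfect code matches the independence-plus-domination definition.
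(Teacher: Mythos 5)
Your proof is correct. The paper itself gives no proof of this lemma (it is stated as known, with citations to the references), and your argument --- identifying the closed neighbourhood of $c$ as $(S\cup\{e\})c$, the open neighbourhood as $Sc$, and observing that for fixed $c$ the element $a=gc^{-1}$ is uniquely determined so that ``unique ball'' is the same as ``unique pair'' --- is exactly the routine verification the authors are relying on.
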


Thus, if $C$ is a perfect code (respectively, total perfect code) in $\Cay(G, S)$, then $(|S|+1) |C| = |G|$ (respectively, $|S| |C| = |G|$).  

\begin{lemma}
[{\cite[Corollary 2.3]{HXZ}}]
\label{lem:subgpPC}
Let $G$ be a group and $H$ a normal subgroup of $G$.  
\begin{itemize}
\item[\rm (a)] If either $|H|$ or $|G|/|H|$ is odd, then there exists a Cayley graph of $G$ which admits $H$ as a perfect code.
\item[\rm (b)] If $|H|$ is even and $|G|/|H|$ is odd, then there exists a Cayley graph of $G$ which admits $H$ as a total perfect code.
\end{itemize}
\end{lemma}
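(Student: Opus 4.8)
The plan is to prove both parts by constructing a suitable Cayley graph explicitly, exploiting the fact that $H$ is normal so that $G/H$ is a group on which we have freedom to build connection sets. The key observation is that a factorization $(A, H)$ of $G$ with $e \in A$ and $A$ inverse-closed is exactly what Lemma~\ref{lem:DirP}(a) needs, and $(A, H)$ is a factorization precisely when $A$ is a complete set of coset representatives for $H$ in $G$; similarly for (b) we want $A$ a complete set of coset representatives with the connection set taken to be $A$ itself (no identity adjoined), which forces $e \notin A$, i.e. $A$ must be a transversal avoiding the trivial coset's representative being $e$—this is where the parity hypotheses enter.

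For part (a), pick any transversal $T = \{t_1, \dots, t_m\}$ of $H$ in $G$ with $t_1 = e$, where $m = |G|/|H|$. I want to replace $T$ by an inverse-closed transversal $A$. Since $tH \mapsto t^{-1}H$ is an involution on $G/H$, the nontrivial cosets pair up into $2$-cycles and fixed points; for a $2$-cycle $\{gH, g^{-1}H\}$ put $g$ and $g^{-1}$ into $A$, and for the trivial coset put $e$ into $A$. The only difficulty is a fixed coset $gH$ with $gH \neq H$ but $g^2 \in H$: here I need to choose a representative $a \in gH$ with $a^{-1} \in gH$ as well, i.e. $a^{-1} = ah$ for some $h \in H$, equivalently $a^2 \in H$ and $a$ is its own ``inverse modulo'' — more carefully, I need an element of order dividing $2$ in the coset, or a pair $\{a, a^{-1}\}$ both lying in $gH$, which happens iff $a^{-1}a^{-1}\cdot\text{(something)}$... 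The clean way: such fixed cosets exist only when $|H|$ is even or $m$ is even, and the hypothesis that at least one of $|H|$, $m$ is odd lets me handle them. If $m$ is odd there are no nontrivial fixed cosets with both the coset self-paired AND an obstruction, since the number of involutions in $G/H$ is odd forces... — more simply, if $m$ is odd then $G/H$ has odd order, so the only solution of $gH = g^{-1}H$ is $gH = H$, and then $A = \{e\} \cup \bigcup \{g, g^{-1}\}$ over the $2$-cycles works immediately with $S = A \setminus \{e\}$. If instead $|H|$ is odd (so $m$ may be even), then for a self-paired coset $gH$ with $g^2 \in H$, the element $g^2$ lies in a group of odd order, so $g^2$ has odd order, hence $g$ has odd order times a power of $2$; picking $a = g^{2^k}$ appropriately gives an element of $gH$ of odd order... actually I should instead note $|gH| = |H|$ is odd, and the map $x \mapsto x^{-1}$ on $gH$ is a fixed-point-free involution unless $gH$ contains an element equal to its own inverse, but a set of odd size cannot admit a fixed-point-free involution, so $gH$ contains such an element $a = a^{-1}$; put this $a$ in $A$. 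Thus $A$ is an inverse-closed transversal, $(A, H)$ is a factorization, $e \in A$, and by Lemma~\ref{lem:DirP}(a), $H$ is a perfect code in $\Cay(G, A \setminus \{e\})$.

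For part (b), with $|H|$ even and $m$ odd, I do the same construction but now I must arrange $e \notin A$, so that $S = A$ is a legitimate connection set and $(S, H) = (A, H)$ is a factorization, giving a total perfect code via Lemma~\ref{lem:DirP}(b). For the trivial coset $H$ itself I cannot use $e$; instead, since $|H|$ is even, $H$ contains an element $a$ of order $2$ (Cauchy), and I put this $a$ into $A$ — note $a^{-1} = a \in H$, consistent. For all nontrivial cosets I argue exactly as above; since $m = |G/H|$ is odd, there are no nontrivial self-paired cosets, so every nontrivial coset pairs with a distinct partner and I pick $\{g, g^{-1}\}$ from the pair $\{gH, g^{-1}H\}$. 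The resulting $A$ is inverse-closed, avoids $e$, and is a transversal of $H$ in $G$, so $(A, H)$ is a factorization and $\Cay(G, A)$ admits $H$ as a total perfect code.

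The main obstacle is the bookkeeping around self-paired cosets in $G/H$ — cosets $gH$ with $gH = g^{-1}H$ but $gH \neq H$ — and verifying that one can always choose an inverse-closed representative from such a coset; the parity hypotheses are exactly calibrated to kill the bad case (either no such cosets exist because $G/H$ has odd order, or the coset has odd size and hence contains a self-inverse element). Everything else is routine: translating between transversals, inverse-closed sets, and factorizations via Lemma~\ref{lem:DirP}.
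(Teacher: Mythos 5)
Your proposal is correct. Note that the paper itself gives no proof of this lemma --- it is quoted verbatim from \cite[Corollary 2.3]{HXZ} --- so the only comparison available is with that source, whose argument is essentially the one you reconstruct: a normal subgroup $H$ is a perfect (resp.\ total perfect) code in some Cayley graph of $G$ exactly when there is an inverse-closed transversal of $H$ containing $e$ (resp.\ avoiding $e$), and the parity hypotheses are precisely what guarantee such a transversal exists. Your handling of the two delicate points is sound: when $|G|/|H|$ is odd the quotient has no elements of order $2$, so no nontrivial coset is self-paired under $gH\mapsto g^{-1}H$; and when $|H|$ is odd, a self-paired coset $gH$ has odd size, so the involution $x\mapsto x^{-1}$ on it must fix some point, yielding an involution of $G$ in that coset to serve as its representative. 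For part (b) the Cauchy-theorem involution in $H$ correctly replaces $e$ as the representative of the trivial coset. The only blemish is the hesitant, self-correcting passage in the middle of part (a) (``more carefully, I need \dots actually I should instead note \dots''), which should be cleaned up to state the fixed-point argument directly; also make explicit that normality of $H$ is what makes the involution $gH\mapsto g^{-1}H$ well defined on cosets. With that tidying, the proof stands on its own.
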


The family of perfect codes in a graph is invariant under the automorphisms of the graph. Since $\Cay(G, S)$ admits the right regular representation of $G$ as a group of automorphisms, we can see that a subset $C$ of $G$ is a perfect code in $\Cay(G, S)$ if and only if $Cg$ is a perfect code in $\Cay(G, S)$ for every $g \in G$ (see, for example, \cite{L01}). Similarly, $C$ is a total perfect code in $\Cay(G,S)$ if and only if $Cg$ is a total perfect code in $\Cay(G,S)$ for every $g \in G$ (see \cite[Lemma 3.1]{Z16}). In particular, if $C$ is a (total) perfect code in $\Cay(G, S)$, then for any $g \in C$, $Cg^{-1}$ is a (total) perfect code in $\Cay(G, S)$ containing $e$. Therefore, any (total) perfect code in $\Cay(G, S)$ can be obtained from some (total) perfect code in $\Cay(G, S)$ containing $e$ by right multiplication by some element of $G$.

From now on we consider abelian groups only and use additive notation for their operation. We use $0$ to denote the identity element of an abelian group $G$. In view of the discussion above, $C$ is a (total) perfect code in $\Cay(G,S)$ if and only if the ``translation'' $C+g$ is also a (total) perfect code in $\Cay(G, S)$ for every $g \in G$, and any (total) perfect code in $\Cay(G,S)$ is a translation of some (total) perfect code in $\Cay(G, S)$ containing $0$. For subsets $X, Y$ and a subgroup $H$ of $G$, set $X+Y = \{x+y: x \in X, y \in Y\}$, $X_0 = X \cup \{0\}$, $X/H = \{H+x: x \in X\} = \{H+g \in G/H: (H+g) \cap X \ne \emptyset\}$ and $G_X = \{g \in G: X + g = X\}$. We also write $X-Y = \{x - y: x \in X, y \in Y\}$ and in particular $X - X = \{x-y: x, y \in X\}$. Write $G = X \oplus Y$ if $(X, Y)$ is a factorization of $G$. Since $X$ is invariant under $G_X$, $X$ is the union of some cosets of $G_X$ in $G$. In particular, if $0 \in X$, then $G_X \subseteq X$. In \cite[Section 2.2]{SS}, each element of $G_X$ is called a \emph{period} of $X$ and $G_X$ is called the \emph{subgroup of periods} of $X$. (Indeed, $G_X$ is a subgroup of $G$ as it is the setwise stabilizer of $X$ under the action of $G$ on itself by addition.) A nonempty subset $X$ of $G$ is called \emph{periodic} if $G_X \ne \{0\}$ and \emph{aperiodic} otherwise \cite{SS}. 

\begin{lemma} 
[{\cite[Proposition 2.1]{Vuza}}]
\label{lem:DirPprop}
Let $G$ be an abelian group, and let $X$ and $Y$ be nonempty subsets of $G$. Then $G = X \oplus Y$ if and only if any two of the following conditions are satisfied:  
\begin{itemize} 
\item[\rm (a)] $G = X + Y$;  
\item[\rm (b)] $(X - X) \cap (Y - Y) = \{0\}$;  
\item[\rm (c)] $|G| = |X| |Y|$.
\end{itemize}
\end{lemma}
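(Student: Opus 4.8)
The plan is to prove the equivalence of the factorization condition $G = X \oplus Y$ with ``any two of (a), (b), (c)'' by showing a cycle of implications among the three pairs, using the trivial observation that $G = X \oplus Y$ is by definition equivalent to the statement that the addition map $X \times Y \to G$, $(x,y) \mapsto x+y$, is a bijection. First I would record that $G = X \oplus Y$ trivially implies each of (a), (b), (c): surjectivity gives (a); a collision $x - x' = y' - y$ with $x,x' \in X$, $y,y' \in Y$ contradicts injectivity unless it is the trivial one, giving (b); and counting the $|X||Y|$ pairs against $|G|$ gives (c). So it suffices to show that each of the three pairs of conditions implies $G = X \oplus Y$.

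The three cases are:
\begin{itemize}
\item[(i)] (a) $+$ (b) $\Rightarrow$ $G = X \oplus Y$: Given $G = X + Y$, every $g \in G$ has at least one representation $g = x + y$. If $g = x + y = x' + y'$ were two representations, then $x - x' = y' - y \in (X - X) \cap (Y - Y) = \{0\}$, forcing $x = x'$ and $y = y'$. Hence the representation is unique, i.e.\ the addition map is a bijection.
\item[(ii)] (a) $+$ (c) $\Rightarrow$ $G = X \oplus Y$: The addition map $X \times Y \to G$ is surjective by (a); since $|X \times Y| = |X||Y| = |G|$ by (c) and all sets are finite, a surjection between finite sets of equal cardinality is a bijection.
\item[(iii)] (b) $+$ (c) $\Rightarrow$ $G = X \oplus Y$: Condition (b) says the addition map is injective (a collision would produce a nonzero element of $(X-X) \cap (Y-Y)$), and with $|X||Y| = |G|$ from (c), an injection between finite sets of equal size is a bijection.
\end{itemize}

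Once these three implications are in place, the equivalence follows: $G = X \oplus Y$ implies all of (a), (b), (c), hence any two of them; conversely any two of (a), (b), (c) is one of the three pairs handled in (i)--(iii), each of which yields $G = X \oplus Y$. I do not expect any real obstacle here — the only thing to be a little careful about is the translation between the set-theoretic conditions on $X - X$ and $Y - Y$ and the injectivity of the addition map, and the use of finiteness of $G$ (which is standing throughout the paper) to upgrade surjections/injections to bijections in cases (ii) and (iii). The argument is entirely elementary; the content of the lemma is really just the packaging of ``bijective $=$ surjective $+$ injective $=$ surjective $+$ cardinality $=$ injective $+$ cardinality'' in the language of sumsets.
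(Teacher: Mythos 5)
Your proof is correct and complete: recasting $G = X \oplus Y$ as bijectivity of the addition map $X \times Y \to G$, identifying (a) with surjectivity, (b) with injectivity, and (c) with the cardinality count, and then using finiteness of $G$ to upgrade in the two cases that need it, is exactly the right argument, and all three pairwise combinations are handled. The paper itself offers no proof of this lemma --- it is quoted directly from \cite[Proposition 2.1]{Vuza} --- so there is nothing to compare against; your elementary argument is the standard one and would serve as a self-contained justification.
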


Using Lemma \ref{lem:DirPprop}, the following lemma was proved in \cite{YPD14} under the condition that $0 \in X$. This result and its proof in \cite{YPD14} remain valid without this condition.  

\begin{lemma} 
[{\cite[Lemma 2.4]{YPD14}}]
\label{lem:perodiv}
Let $G$ be an abelian group. Let $X$ and $C$ be nonempty subsets of $G$ and let $H = G_{X}$. Then $G = X \oplus C$ if and only if $G/H = (X/H) \oplus (C/H)$ and $H \cap (C - C) = \{0\}$.
\end{lemma}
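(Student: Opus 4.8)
The plan is to establish both factorizations by checking two of the three equivalent conditions in Lemma~\ref{lem:DirPprop} (existence of a product expression, trivial intersection of difference sets, matching cardinalities) rather than by arguing directly about uniqueness of representations. Two structural observations carry most of the weight. First, as already noted, $X$ is a union of cosets of $H = G_X$; hence $X$ is $H$-periodic, $|X/H| = |X|/|H|$, and any set of the form $X + Z$ with $Z \subseteq G$ is again a union of cosets of $H$. Second, writing $\pi : G \to G/H$ for the quotient homomorphism (so $\pi(X) = X/H$, $\pi(C) = C/H$), the condition $H \cap (C - C) = \{0\}$ says exactly that $\pi$ is injective on $C$, equivalently $|C/H| = |C|$: if $c_1, c_2 \in C$ and $c_1 - c_2 \in H$ then $c_1 - c_2 \in H \cap (C-C)$. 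Finally, note $H \subseteq X - X$, since for a fixed $x \in X$ and any $h \in H$ we have $x + h \in X$, so $h = (x+h) - x \in X - X$.

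For the forward direction, assume $G = X \oplus C$. Counting representations gives $|G| = |X|\,|C|$, and Lemma~\ref{lem:DirPprop} then gives $(X - X) \cap (C - C) = \{0\}$; since $H \subseteq X - X$ this yields $H \cap (C - C) \subseteq (X-X) \cap (C-C) = \{0\}$, which is one of the two desired conclusions and also gives $|C/H| = |C|$. It remains to prove $G/H = (X/H) \oplus (C/H)$, for which we check conditions (a) and (c) of Lemma~\ref{lem:DirPprop} applied to $G/H$. Applying $\pi$ to $G = X + C$ gives $G/H = \pi(X) + \pi(C) = (X/H) + (C/H)$, which is (a). For (c), $|X/H|\,|C/H| = (|X|/|H|)\,|C| = |X|\,|C|/|H| = |G|/|H| = |G/H|$. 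Hence Lemma~\ref{lem:DirPprop} gives $G/H = (X/H) \oplus (C/H)$.

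For the reverse direction, assume $G/H = (X/H) \oplus (C/H)$ and $H \cap (C - C) = \{0\}$. As above the second hypothesis gives $|C/H| = |C|$, so from $|G/H| = |X/H|\,|C/H|$ we get $|G| = |H|\cdot|G/H| = |H|\cdot(|X|/|H|)\cdot|C| = |X|\,|C|$, which is condition (c) of Lemma~\ref{lem:DirPprop} for $G$. For condition (a), applying $\pi$ to $X + C$ gives $\pi(X + C) = (X/H) + (C/H) = G/H$, so $X + C$ meets every coset of $H$; but $X + C$ is a union of cosets of $H$ because $X$ is, so in fact $X + C = G$. Therefore Lemma~\ref{lem:DirPprop} gives $G = X \oplus C$.

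The argument is largely bookkeeping with $\pi$, and the only genuinely non-formal point is the last step: surjectivity of $X + C$ onto $G/H$ upgrades to $X + C = G$ only because $X + C$ is $H$-periodic. The other thing to be careful about is the correct relation among the cardinalities $|X|$, $|X/H|$, $|H|$ (fine since $X$ is a union of $H$-cosets) and between $|C|$ and $|C/H|$ (fine precisely because $H \cap (C-C) = \{0\}$). I do not expect an obstacle beyond keeping these identifications straight; in particular one never needs to verify condition (b) of Lemma~\ref{lem:DirPprop} for $G$ directly, since it comes for free once (a) and (c) are in hand.
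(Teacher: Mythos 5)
Your proof is correct, and it follows the route the paper itself indicates: the paper does not prove this lemma but cites \cite[Lemma 2.4]{YPD14}, remarking that the proof there uses Lemma~\ref{lem:DirPprop} and remains valid without the hypothesis $0 \in X$. Your argument supplies a clean, self-contained version of exactly that approach, with the two key identifications ($|X/H| = |X|/|H|$ since $X$ is a union of $H$-cosets, and $|C/H| = |C|$ precisely when $H \cap (C-C) = \{0\}$) and the $H$-periodicity of $X + C$ all handled correctly.
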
 

\begin{lemma}
[\cite{{Vuza}}]
\label{lem:peripk} 
Let $n \ge 2$ be an integer, and let $X$ and $Y$ be nonempty subsets of $\ZZZ_n$ such that $|X| = p^k$ for a prime $p$ and an integer $k \geq 0$. If $\mathbb{Z}_n = X \oplus Y$, then at least one of $X$ and $Y$ is periodic.  
\end{lemma}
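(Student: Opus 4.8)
\par\noindent\textbf{Proof sketch.}\quad
The plan is to pass to the polynomial encoding of factorisations. For $A\subseteq\ZZZ_n$ set $A(x)=\sum_{a\in A}x^{a}\in\ZZZ[x]/(x^{n}-1)$; then $\ZZZ_n=X\oplus Y$ is equivalent to $X(x)\,Y(x)\equiv 1+x+\cdots+x^{n-1}\pmod{x^{n}-1}$, and $1+x+\cdots+x^{n-1}=\prod_{d\mid n,\,d>1}\Phi_{d}(x)$, where $\Phi_{d}$ is the $d$-th cyclotomic polynomial. For each divisor $d>1$ of $n$ one has $\Phi_{d}(x)\mid x^{n}-1$, so $\Phi_{d}(x)$ actually divides $X(x)Y(x)$ in $\ZZZ[x]$, and by irreducibility $\Phi_{d}(x)$ divides $X(x)$ or $Y(x)$. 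I will also use a cyclotomic description of periodicity: for a prime $q$ with $q^{c}\parallel n$, the unique subgroup of $\ZZZ_n$ of order $q$ is contained in $G_{A}$ if and only if $\dfrac{x^{n}-1}{x^{n/q}-1}=\prod\{\Phi_{d}(x):q^{c}\mid d\mid n\}$ divides $A(x)$, that is, if and only if $\Phi_{d}(x)\mid A(x)$ for \emph{every} divisor $d$ of $n$ with $q^{c}\mid d$ (immediate from $G_{A}\supseteq\langle n/q\rangle\iff (x^{n/q}-1)A(x)\equiv0\pmod{x^{n}-1}$ and the squarefreeness of $x^{n}-1$). Hence $A$ is periodic precisely when, for some prime $q\mid n$, the whole ``face'' $\{d:q^{c}\mid d\mid n\}$ of the divisor lattice of $n$ lies inside $\{d:\Phi_{d}(x)\mid A(x)\}$.

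First I would dispose of the trivial case and fix notation: if $k=0$ then $|X|=1$, hence $|Y|=n$ and $Y=\ZZZ_n$ is periodic because $n\ge2$; so assume $k\ge1$ and write $n=p^{a}m$ with $p\nmid m$ and $a\ge k\ge1$. A mass argument (essentially the $(\mathrm{T1})$ observation of Coven--Meyerowitz) then pins down which prime-power cyclotomics divide $X$: for a prime $\ell$ with $\ell^{c}\parallel n$, the distinct factors $\Phi_{\ell}(x),\dots,\Phi_{\ell^{c}}(x)$ of $X(x)Y(x)$ each divide $X(x)$ or $Y(x)$; if $r$ of them divide $X(x)$ then $\ell^{r}=\prod\Phi_{\ell^{j}}(1)$ divides $X(1)=|X|$, while at least $c-r$ of them divide $Y(x)$ so that $\ell^{\,c-r}$ divides $|Y|$, whence $v_{\ell}(|X|)+v_{\ell}(|Y|)=c$ forces $r=v_{\ell}(|X|)$. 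With $|X|=p^{k}$ this yields: for every prime $\ell\ne p$ and every $j\ge1$ with $\ell^{j}\mid n$, $\Phi_{\ell^{j}}(x)$ divides $Y(x)$; and exactly $k$ of $\Phi_{p}(x),\dots,\Phi_{p^{a}}(x)$ divide $X(x)$, the remaining $a-k$ dividing $Y(x)$. In particular $\Phi_{p^{a}}(x)$ divides $X(x)$ or $Y(x)$, so when $m=1$ (that is, $n=p^{a}$) the only face of the divisor lattice is the singleton $\{p^{a}\}$ and the criterion above instantly gives that $X$ or $Y$ is periodic.

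It remains to treat $m>1$, and this is where the real work lies. I would proceed by induction on $n$. Fix a prime $q\mid m$ and let $Q\le\ZZZ_n$ be the subgroup of order $q$; if $Q\subseteq G_{X}$ or $Q\subseteq G_{Y}$ then $X$ or $Y$ is periodic and we are done, so assume otherwise. Reducing $X(x)Y(x)\equiv 1+x+\cdots+x^{n-1}\pmod{x^{n}-1}$ modulo $x^{n/q}-1$ (which divides $x^{n}-1$) gives $\widetilde X(x)\widetilde Y(x)\equiv q\,(1+x+\cdots+x^{\,n/q-1})\pmod{x^{n/q}-1}$, where $\widetilde A(x)=A(x)\bmod(x^{n/q}-1)$ is the multiplicity-generating function of the image of $A$ in $\ZZZ_n/Q\cong\ZZZ_{n/q}$ (a polynomial with non-negative integer coefficients summing to $|A|$); thus the projection is a factorisation ``with multiplicity $q$'', not a genuine one, and the induction hypothesis does not apply directly. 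The crux --- essentially the content of de Bruijn's argument, and of Vuza's exposition, which I would not reproduce --- is to show that, the two periodicity conclusions having been excluded, this multiplicity tiling can be \emph{rectified} into a genuine factorisation of a proper cyclic quotient (or subgroup) of $\ZZZ_n$ in which the distinguished factor still has $p$-power order, so that the induction closes. In cyclotomic terms this is exactly the task of controlling the non-prime-power divisibilities $\Phi_{p^{a}e}(x)\mid X(x)$ or $\Phi_{p^{a}e}(x)\mid Y(x)$ for $e\mid m$ --- the ones not already forced by the mass argument --- and showing they cannot be split between $X$ and $Y$ so as to leave every face of the divisor lattice bichromatic. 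I expect this to be the main obstacle: by that point the ``evaluate at $x=1$'' information is exhausted (each $\Phi_{p^{a}e}(x)$ with $m>1$ takes the value $1$ at $x=1$), and one must genuinely exploit the $\{0,1\}$-coefficient structure of $X(x)$, which is the delicate core of the argument.
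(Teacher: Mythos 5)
The paper offers no proof of this lemma: it is quoted from \cite{Vuza}, and the result itself goes back to de Bruijn's work on factorisations of cyclic groups. So there is nothing in the paper to compare your argument against, and your submission has to stand on its own. It does not, because it is a sketch whose central step is explicitly left unproved. What you do establish is correct and is the standard frame: the passage to $X(x)Y(x)\equiv 1+x+\cdots+x^{n-1}\pmod{x^n-1}$, the observation that each $\Phi_d$ ($d\mid n$, $d>1$) must divide $X(x)$ or $Y(x)$, the characterisation of periodicity of $A$ as ``some face $\{d: q^{c}\mid d\mid n\}$ lies entirely in the set of cyclotomic divisors of $A(x)$'', the evaluation-at-$1$ mass argument pinning down the prime-power cyclotomic factors, and the easy cases $k=0$ and $n=p^{a}$. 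But all of that is routine bookkeeping; none of it touches the cyclotomic polynomials $\Phi_d$ with $d$ divisible by two distinct primes, which are exactly the ones that decide periodicity once $n$ has a second prime factor.

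The theorem's entire content is the step you defer: showing that these remaining factors cannot be split between $X(x)$ and $Y(x)$ so as to leave every face bichromatic. Your own text concedes this (``the crux \ldots which I would not reproduce'', ``I expect this to be the main obstacle''). Moreover, the inductive scheme you propose does not close as stated: reducing modulo $x^{n/q}-1$ yields only a multiset factorisation of $\ZZZ_{n/q}$ with multiplicity $q$, and you supply no mechanism for rectifying it into a genuine factorisation of a smaller cyclic group in which one factor still has $p$-power order --- that rectification \emph{is} de Bruijn's argument, not a detail one can wave at. As it stands the proposal is an accurate map of where the difficulty lies, but it is not a proof; to be acceptable it would need either the missing combinatorial step carried out in full or an honest citation of the lemma to de Bruijn, Sands, or \cite{Vuza}, which is what the paper itself does.
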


\begin{lemma}
\label{lem:shortpf}
Let $n \ge 2$ be an integer. Let $X$ be a subset of $G = \mathbb{Z}_n$ with $|X| \ge 2$ and let $H = G_{X}$. If $|X|$ and $n/|X|$ are coprime and $|X| = |H|$, then $x \not \equiv x'$ (mod $|X|$) for any two distinct elements $x, x' \in X$.
\end{lemma}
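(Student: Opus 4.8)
The plan is to first show that the hypothesis $|X| = |H|$ forces $X$ to be a single coset of $H$, and then to read off its arithmetic structure inside $\ZZZ_n$. Since $H = G_{X}$ consists of the periods of $X$, we have $X + H = X$, so $X$ is a union of cosets of $H$; as each such coset has size $|H| = |X|$, it follows that $X = a + H$ for some $a \in \ZZZ_n$. Set $m = |X| = |H|$. By Lagrange's theorem $m \mid n$, and $\ZZZ_n$ has a unique subgroup of order $m$, namely the cyclic group $\langle n/m \rangle$ generated by $d := n/m$; thus $H = \{0, d, 2d, \ldots, (m-1)d\}$ and $X = \{a + id : 0 \le i \le m - 1\}$.

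Next I would bring in the hypothesis $\gcd(|X|, n/|X|) = 1$, i.e.\ $\gcd(m, d) = 1$. Since $m \mid n$, reduction modulo $m$ is a well-defined homomorphism $\ZZZ_n \to \ZZZ_m$, so the statement ``$x \not\equiv x' \pmod{|X|}$'' is meaningful for $x, x' \in \ZZZ_n$. Take two distinct elements $a + id, a + jd \in X$ with $0 \le i < j \le m - 1$. Their difference is $(j - i)d$ with $1 \le j - i \le m - 1$, and because $\gcd(d, m) = 1$ we have $(j - i)d \not\equiv 0 \pmod m$. Hence $a + id \not\equiv a + jd \pmod m$, which is exactly the claim. (Equivalently, the map $\ZZZ_m \to \ZZZ_m$ sending $i$ to the class of $a + id$ is a bijection, precisely because $d$ is a unit modulo $m$.)

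The argument is short and I do not expect a genuine obstacle; the points needing a little care are the reduction to the single-coset case, the identification of $H$ with $\langle n/m \rangle$ inside $\ZZZ_n$, and the observation that $|X| \mid n$ (so that congruences modulo $|X|$ even make sense). The coprimality hypothesis is exactly what is needed, and only what is needed, to make multiplication by $d = n/|X|$ invertible modulo $|X|$.
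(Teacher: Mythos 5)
Your proof is correct and follows essentially the same route as the paper's: reduce to the case $X = a + H$ with $H = (n/|X|)\ZZZ_n$ the unique subgroup of order $|X|$, then use coprimality of $|X|$ and $n/|X|$ to see that the differences $(j-i)\,n/|X|$ are nonzero modulo $|X|$. The only (welcome) addition is your explicit remark that $|X| \mid n$ makes the congruence modulo $|X|$ meaningful, which the paper leaves implicit.
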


\begin{proof}
Set $k = |X|$. Since $|X| = |H|$ and $H$ is a subgroup of $G$, $k$ divides $n$ and $H = (n/k) \ZZZ_n$. Since $H = G_{X}$, $X$ is the union of some cosets of $H$. This together with $|X| = |H|$ implies that $X$ is a coset of $H$, say, $X = H+a$ for some $a \in X$. Thus, for any two distinct elements $x, x' \in X$, we have $x = (cn/k)+a$ and $x' = (c'n/k)+a$ for some distinct integers $c, c'$ with $0 \leq c, c' < k$. Since $k$ is not a divisor of $c - c'$ and $k$ and $n/k$ are coprime, it follows that $x - x' =  (c - c')n/k \not \equiv 0$ (mod $k$). 
\qed
\end{proof}

\begin{lemma}
\label{lem:rmvperi}
Let $G$ be an abelian group. Let $X$ be a nonempty subset of $G$ and let $H = G_X$. Then $X/H$ is an aperiodic subset of $G/H$.
\end{lemma}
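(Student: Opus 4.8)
The plan is to verify directly that the period subgroup $(G/H)_{X/H}$ of $X/H$ in $G/H$ is trivial. Since $X$ is invariant under $H = G_X$, it is a union of cosets of $H$ (as already noted in the text); write $X = \bigcup_{H+x \in X/H} (H+x)$. Suppose $H+g$ is a period of $X/H$, i.e.\ $(X/H) + (H+g) = X/H$; the goal is to deduce that $g \in H$, so that $H+g$ is the identity element of $G/H$.

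The main step is to lift this equation from $G/H$ back to $G$. Adding $g$ to the displayed union gives $X+g = \bigcup_{H+x \in X/H}(H+x+g)$. The hypothesis $(X/H)+(H+g) = X/H$ says precisely that the family of cosets $\{H+x+g : H+x \in X/H\}$ coincides, as a set of cosets, with the family $X/H$; since distinct cosets of $H$ are disjoint, taking the union of each of these two equal families of cosets yields $X + g = X$. Hence $g \in G_X = H$, so $H+g$ is the identity of $G/H$. Therefore $(G/H)_{X/H} = \{H\}$, which is exactly the assertion that $X/H$ is aperiodic.

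There is essentially no obstacle here; the only point needing a little care is the bookkeeping in the lifting step — namely that a bijection of the indexing family of cosets under translation by $H+g$ yields equality of the corresponding unions of pairwise disjoint cosets. Everything else follows immediately from the definition of $G_X$ and the fact that $X$ is a union of cosets of $G_X$. (If one preferred, the same conclusion could instead be extracted from Lemma \ref{lem:perodiv}, but the direct argument above is shorter.)
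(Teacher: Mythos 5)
Your proof is correct and follows essentially the same route as the paper's: lift the relation $(X/H)+(H+g)=X/H$ to $X+g=X$ in $G$ and conclude $g\in G_X=H$. The only difference is cosmetic — the paper argues element-wise, obtaining $X+g\subseteq X$ and then invoking equal cardinalities (finiteness) to get equality, whereas your coset-union bookkeeping yields $X+g=X$ directly.
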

	
\begin{proof}
Suppose that $H+g \in G/H$ satisfies $(X/H)+(H+g) = X/H$. Then for any $x \in X$ we have $H+(x + g) = (H+x) + (H+g) \in X/H$. So there exist $x' \in X$ and $h \in H$ such that $x + g = x' + h$. Since $H$ fixes $X$ setwise, we have $x + g = x' + h \in X$. Since this holds for every $x \in X$, we conclude that $X + g \subseteq X$. Since $X+g$ and $X$ have the same cardinality, it follows that $X+g = X$ and hence $g \in H$. Therefore, $H+g = H$. So the subgroup of periods of $X/H$ in $G/H$ is the trivial subgroup $\{H\}$ of $G/H$. In other words, $X/H$ is an aperiodic subset of $G/H$. 
\qed
\end{proof}


\section{Cayley graphs of good abelian groups}
\label{sec:good}

An abelian group is \emph{good} if for every factorization of it into two factors at least one factor is periodic \cite{SS}. In 1962, Sands \cite{Sands62} proved that good abelian groups are precisely the abelian groups of the following types and their subgroups: \{$p^k$, $q$\}, \{$p^2$, $q^2$\}, \{$p^2$, $q$, $r$\}, \{$p$, $q$, $r$, $s$\}, \{$p^3$, $2$, $2$\}, \{$p^2$, $2$, $2$, $2$\}, \{$p$, $2^2$, $2$\}, \{$p$, $2$, $2$, $2$, $2$\}, \{$p$, $q$, $2$, $2$\}, \{$p$, $3$, $3$\}, \{$3^2$, $3$\}, \{$2^\lambda$, $2$\}, \{$2^2$, $2^2$\}, \{$p$, $p$\}, where $p$, $q$, $r$ and $s$ are distinct primes. It is known (see, for example, \cite{Tij95}) that good cyclic groups are precisely $\mathbb{Z}_{p^k q}, \mathbb{Z}_{p^2q^2}, \mathbb{Z}_{p^2qr}, \mathbb{Z}_{pqrs}$ and their subgroups, where $p$, $q$, $r$ and $s$ are distinct primes and $k$ is a positive integer. 

\begin{theorem}
\label{thm:APCsubgp}
Let $G$ be a good abelian group with order $n$, and let $p$ be a proper odd prime divisor of $n$. The following statements hold:
\begin{itemize} 
\item[\rm (a)] 
every coset of any subgroup of $G$ with order $n/p$ is a perfect code in some Cayley graph of $G$ with degree $p-1$; 
\item[\rm (b)] 
every perfect code of size $n/p$ in any connected Cayley graph of $G$ with degree $p-1$ is a coset of some subgroup of $G$ with order $n/p$.  
\end{itemize}
\end{theorem}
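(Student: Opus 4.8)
The plan is to handle the two statements separately, using Lemma~\ref{lem:DirP} to translate everything into the language of factorizations and tilings, and then exploiting the definition of ``good abelian group'' to force periodicity of one factor.

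For part (a), let $H$ be a subgroup of $G$ with $|H| = n/p$, and let $C = H + g$ be a coset. Since $p$ is odd, either $|H|$ or $|G|/|H| = p$ is odd (indeed the latter is), so by translating we may assume $g = 0$, i.e.\ $C = H$. We want to exhibit an inverse-closed $S \subseteq G \setminus \{0\}$ with $|S| = p - 1$ such that $(S \cup \{0\}, C)$ is a factorization of $G$. The quotient $G/H$ has order $p$, hence is cyclic of prime order, so $G/H = \mathbb{Z}_p$; pick a generator $H + a$ and let $A = \{0, a, 2a, \ldots, (p-1)a\}$ be a transversal of $H$ in $G$ that maps to $\mathbb{Z}_p$. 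Then $(A, H)$ is a factorization of $G$ with $|A| = p$. Set $S = A - A$ restricted appropriately; more carefully, one should check that $A$ can be chosen to be a ``symmetric'' transversal, i.e.\ $A = -A$. Since $\langle a\rangle$ maps onto $\mathbb{Z}_p$, the subgroup $\langle a \rangle$ has order divisible by $p$; replacing $a$ by a suitable multiple (using that $p$ is odd, so $2$ is invertible mod $p$) we can arrange that the $p$ coset representatives are closed under negation. Then $S = A \setminus \{0\}$ is inverse-closed of size $p-1$, and Lemma~\ref{lem:subgpPC}(a) (or a direct verification that $(A, H)$ is a factorization) shows $H$ is a perfect code in $\Cay(G, S)$; this graph is connected iff $S$ generates $G$, but part (a) does not require connectedness. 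This direction is essentially Lemma~\ref{lem:subgpPC}(a) specialized to index-$p$ subgroups, so it should be short.

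For part (b), suppose $C$ is a perfect code of size $n/p$ in a connected Cayley graph $\Cay(G, S)$ with $|S| = p - 1$; translating, assume $0 \in C$. By Lemma~\ref{lem:DirP}(a), $(S \cup \{0\}, C)$ is a factorization of $G$, with $|S \cup \{0\}| = p$ and $|C| = n/p$. Since $G$ is good, at least one of $S \cup \{0\}$ and $C$ is periodic. I would argue that $C$ must be the periodic one: if instead $S \cup \{0\}$ were periodic with period subgroup $K = G_{S_0} \ne \{0\}$, then $S \cup \{0\}$ is a union of cosets of $K$, so $|K|$ divides $|S \cup \{0\}| = p$, forcing $|K| = p$, so $S \cup \{0\}$ is a single coset of $K$ containing $0$, i.e.\ $S \cup \{0\} = K$ is a subgroup of order $p$; but then $\Cay(G, S)$ is disconnected (its components are the cosets of $K$) unless $G = K$, contradicting that $p$ is a \emph{proper} divisor. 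Hence $C$ is periodic. Let $H = G_C$; since $0 \in C$ we have $H \subseteq C$, and by Lemma~\ref{lem:perodiv}, $G/H = (S_0/H) \oplus (C/H)$ with $H \cap (C - C) = \{0\}$. The last condition combined with $H \subseteq C$ (so $H - H \subseteq C - C$) gives $H \cap (H - H) = H = \{0\}$ unless\,\ldots\ wait — this forces $H = \{0\}$, a contradiction, \emph{unless} I instead conclude $|H| = |C|$, i.e.\ $C$ itself is a single coset of $H$. Indeed: $H \subseteq C$ and $H \cap (C-C) = \{0\}$; but $H \subseteq C$ means $H = H - 0 \subseteq C - C$, so $H = H \cap (C - C) = \{0\}$ — that is wrong reasoning since $0 \in C$ gives $H \subseteq C - C$ literally. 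The correct route: $C/H$ is aperiodic (Lemma~\ref{lem:rmvperi}), and $|G/H| = |S_0/H| \cdot |C/H|$; one then shows $|C/H| = 1$, forcing $C = H + c$ a coset, hence (after the initial translation) $C$ is a subgroup of order $n/p$. To get $|C/H| = 1$: from $H \cap (C - C) = \{0\}$ and $H \subseteq C$, any two elements of $C$ lying in the same coset of $H$ differ by an element of $H \cap (C-C) = \{0\}$, so $C$ meets each coset of $H$ at most once; combined with $H \subseteq C$ this means $C = H$.

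The main obstacle I anticipate is precisely pinning down in part (b) why the \emph{correct} factor is periodic and why periodicity of $C$ upgrades to ``$C$ is a full coset of its period subgroup'' rather than merely ``$C$ is a union of cosets.'' The connectedness hypothesis is doing real work here — it is exactly what rules out the degenerate case where $S \cup \{0\}$ is the periodic factor — and the size constraint $|S \cup \{0\}| = p$ prime is what makes the period subgroup, if it exists on the $S$-side, all of $S \cup \{0\}$. On the $C$-side, the interplay $0 \in C \Rightarrow G_C \subseteq C$ together with $G_C \cap (C - C) = \{0\}$ from Lemma~\ref{lem:perodiv} is what collapses $C$ to a single coset. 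A secondary technical point in part (a) is producing an \emph{inverse-closed} transversal of $H$ of size $p$; this uses that $p$ is odd so that $\{0, \pm a, \pm 2a, \ldots\}$ can be taken symmetric, and it is the only place the oddness of $p$ is genuinely needed for (a).
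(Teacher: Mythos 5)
Part (a) of your proposal is essentially the paper's argument: both reduce to Lemma~\ref{lem:subgpPC}(a) applied to a subgroup of index $p$ (the explicit symmetric-transversal construction you sketch is not needed and is shakier than simply citing that lemma, but you do fall back on the lemma, so this part is fine). Your Claim that $C$, not $S_0$, must be the periodic factor is also correct and matches the paper's Claim~1, including the correct use of connectedness and the properness of $p$.

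The gap is in the final collapse step of part (b). With $H = G_C$, Lemma~\ref{lem:perodiv} (applied with $X = C$) yields $G/H = (C/H) \oplus (S_0/H)$ together with $H \cap (S_0 - S_0) = \{0\}$ --- the intersection condition involves the \emph{other} factor $S_0$, not $C$. Your version, $H \cap (C - C) = \{0\}$, is not only a misreading of the lemma but is outright false here, since $0 \in C$ and $H \subseteq C$ force $H \subseteq C - C$; you half-notice this (``this forces $H = \{0\}$, a contradiction'') but then reuse the same false condition in your ``correct route,'' and the resulting deduction (``$C$ meets each coset of $H$ at most once; combined with $H \subseteq C$ this means $C = H$'') is self-contradictory when $|H| > 1$. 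The step you are missing is the one that actually does the work in the paper: the correct condition $H \cap (S_0 - S_0) = \{0\}$ gives $|S_0/H| = p$; since $G/H$ is again a good abelian group and $C/H$ is aperiodic by Lemma~\ref{lem:rmvperi}, the factorization $G/H = (C/H) \oplus (S_0/H)$ forces $S_0/H$ to be periodic, and as $|S_0/H| = p$ is prime and $S_0/H$ contains the identity of $G/H$, it must equal its own period subgroup, i.e.\ $S_0/H$ is a subgroup of $G/H$ of order $p$. Connectedness is then used a \emph{second} time, at the quotient level: $G/H = \langle S_0/H \rangle = S_0/H$, so $|G/H| = p = |S_0/H|\,|C/H|$ gives $|C/H| = 1$ and hence $C = H$. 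Without this second application of connectedness and the periodicity argument for $S_0/H$, there is no way to rule out $C$ being a union of several cosets of $G_C$, so your proof of (b) does not go through as written.
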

	
\begin{proof}
(a) Let $H$ be a subgroup of $G$ with order $|H| = n/p$. Then $|G/H| = p$ as $|G|=n$. Since $p$ is odd, by Lemma \ref{lem:subgpPC}, $H$ is a perfect code in some Cayley graph $\Cay(G,S)$ of $G$. Thus, $|S_0| |H| = n$ and $\Cay(G,S)$ has degree $|S| = |S_0|-1 = p - 1$. Moreover, every coset of $H$ in $G$ is also a perfect code in $\Cay(G,S)$.  

(b) Now suppose that $C$ is a perfect code in some connected Cayley graph $\Cay(G,S)$ with degree $|S| = p - 1$. Then $|S_0| = |G|/|C| = p$ and $G = S_0 \oplus C$ by Lemma \ref{lem:DirP}. Since for any $g \in G$, $C+g$ is also a perfect code in $\Cay(G,S)$, we may assume without loss of generality that $0 \in C$. It suffices to prove that $C$ is a subgroup of $G$. Since $G$ is a good abelian group, either $S_0$ or $C$ is periodic.

\medskip	
\textsf{Claim 1.} $C$ is periodic.

Suppose otherwise. Then $S_0$ must be periodic; that is, $G_{S_0}$ is a nontrivial subgroup of $G$. Since $|G_{S_0}|$ divides $|S_0| = p$, we have $|G_{S_0}| = |S_0| = p$. Since $0 \in S_0$ and $S_0$ is the union of some cosets of $G_{S_0}$ in $G$, it follows that $S_0 = G_{S_0}$ and hence $S_0$ is a proper subgroup of $G$. Thus, $\Cay(G, S)$ is disconnected, but this contradicts our assumption. This proves Claim 1. 

Set $H = G_{C}$. Then by Claim 1, $H$ is a nontrivial subgroup of $G$, and $C$ is the union of some cosets of $H$. Moreover, $H \subseteq C$ as $0 \in C$. 

\medskip	
\textsf{Claim 2.} $S_0/ H$ is a subgroup of $G / H$.

In fact, by Lemma \ref{lem:perodiv}, $G / H = (C / H) \oplus (S_0 / H)$ and $H \cap (S_0 - S_0) = \{0\}$. Hence $H+s \neq H+s'$ for distinct $s, s' \in S_0$. Thus, $|S_0 / H| = |S_0| = p$. Since $G$ is a finite abelian group, $G / H$ is isomorphic to a subgroup of $G$. Since $G$ is a good abelian group, we obtain that $G/H$ is also a good abelian group. Since $C / H$ is an aperiodic subset of $G/H$ (Lemma \ref{lem:rmvperi}) and $G / H = (C / H) \oplus (S_0 / H)$, it follows that $S_0 / H$ is periodic. That is, the stabilizer $K$ of $S_0 / H$ in $G/H$ is nontrivial. Since $|K|$ is a divisor of $|S_0 / H| = p$, we must have $|K| = p$. Since $S_0 / H$ contains the identity element of $G/H$, we have $K \subseteq S_0 / H$ and therefore $S_0 / H = K$ is a subgroup of $G/H$, as stated in Claim 2. 
	
Since $\Cay(G,S)$ is connected, we have $G = \langle S_0 \rangle$. This together with Claim 2 implies that $G/H = \langle S_0 / H \rangle = S_0 / H$. However, by Lemma \ref{lem:DirPprop}, $|G/H| = |S_0 / H| |C/H|$. Thus, $|C| = |H|$, which together with $H \subseteq C$ implies $C = H$. 
\qed
\end{proof}

The following example shows that a similar statement as in part (b) of Theorem \ref{thm:APCsubgp} for total perfect codes is not true. More precisely, for a good abelian group $G$ with order $n$ and a proper odd prime divisor $p$ of $n$, a total perfect code in a connected Cayley graph of $G$ with degree $p$ may not be a coset of some subgroup of $G$ with order $n/p$. 

\begin{example}
Let $p \geq 5$ be a prime. Then $G = \mathbb{Z}_2 \times \mathbb{Z}_{2p}$ is a good abelian group. Let 
$$
S = \{(1, p)\} \ \cup \ \{(0, i): i \in \mathbb{Z}_{2p}, i \neq p \text{ is odd}\}
$$ 
and 
$$
C = \{(0,0), (0,1), (1,0), (1,1)\}.
$$ 
Then $\Cay(G,S)$ is connected as $G = \langle S \rangle$. Obviously, $|S| = p$, $|C| = 4$ and $|S| |C| = 4p = |G|$. We have $C - C = \{(0,0), (0, 1), (0, 2p-1), (1,0), (1,1), (1,2p-1)\}$. On the other hand, the second coordinate of each element of $S - S$ is even. Hence $(S - S) \cap (C - C) = \{(0,0)\}$. By Lemma \ref{lem:DirPprop}, $G = S \oplus C$, and by Lemma \ref{lem:DirP}, $C$ is a total perfect code in $\Cay(G,S)$. It can be verified that the only subgroup of $G$ with order $4$ is $H = \{(0,0), (0,p), (1,0), (1,p)\}$.
\delete
{
In fact, let $H$ be a subgroup of $G$ with order $4$. If $H$ contains an element $(x,y)$ such that $y \neq p$ is odd, then $y$ is coprime to $2p$ and therefore $|H| \geq |\langle (x,y) \rangle| \geq 2p > 4$, a contradiction. If $H$ contains an element $(x,y)$ such that $y \neq 0$ is even, then the greatest common divisor of $y$ and $2p$ is $2$ and therefore $|H| \geq |\langle (x,y) \rangle| \geq p > 4$, a contradiction again. 
}
Clearly, $C \ne H+g$ for any $g \in G$, and therefore $C$ is not a coset of any subgroup of $G$ with order $4$. 
\end{example}

The appropriate counterpart of part (b) of Theorem \ref{thm:APCsubgp} for total perfect codes is as follows. 

\begin{theorem}
\label{thm:ATPCsubgp}
Let $G$ be a good abelian group with order $n$, and let $p$ be a proper odd prime divisor of $n$. Let $C$ be a subset of $G$ with size $|C| = n/p$, and let $H = G_C$ be the subgroup of periods of $C$. If $C$ is a total perfect code in some connected Cayley graph $\Cay(G,S)$ of $G$, then $n$ is even and either (i) $|C| = 2$ and $\Cay(G,S) \cong K_{p, p}$, or (ii) $|C| > 2$, $|G/H| = 2p < n$, $H \cap S = \emptyset$, and $C$ is the union of exactly two cosets of $H$ in $G$. If in addition $0 \in C$, then $C = H \cup (H + z)$ for some $z \in G \setminus H$ such that $|(H + z) \cap S| = 1$ and $H + z$ generates the subgroup of periods of $S/H$ in $G/H$.
\end{theorem}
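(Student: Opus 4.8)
The plan is to run an argument parallel to the proof of Theorem~\ref{thm:APCsubgp}(b), exploiting goodness of $G$ and of its quotients together with the involution $S=-S$. By Lemma~\ref{lem:DirP}(b), $C$ is a total perfect code in $\Cay(G,S)$ iff $G=S\oplus C$; hence $|S|\,|C|=n$, so $|S|=n/|C|=p$ and $\Cay(G,S)$ has degree $p$. Since the subgraph induced on a total perfect code is a perfect matching, $|C|$ is even; as $|C|=n/p$ with $p$ odd, $n$ is even. Because total perfect codes are preserved by translation and $G_{C+g}=G_C$, I may assume $0\in C$ (this changes none of the assertions), so $H=G_C\subseteq C$ and $C$ is a union of cosets of $H$. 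Finally, $G$ good together with $G=S\oplus C$ forces at least one of $S,C$ to be periodic, and the proof splits accordingly.

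\textbf{Case $C$ aperiodic.} Then $S$ is periodic, so $K:=G_S\ne\{0\}$; since $|K|$ divides $|S|=p$, $|K|=p=|S|$, so $S$ is a single coset of $K$, and $0\notin S$ gives $S\cap K=\emptyset$. Connectedness yields $G=\langle S\rangle=K+\langle a\rangle$ for any $a\in S$, so $G/K$ is cyclic, generated by $a+K$; and $S=-S$ gives $2a\in K$, so $a+K$ has order $2$. Hence $[G:K]=2$, $n=2p$, $|C|=2$, and $S$ is precisely the non-identity coset of the index-$2$ subgroup $K$, so $x\sim y$ iff $x,y$ lie in different cosets of $K$; that is $\Cay(G,S)\cong K_{p,p}$, which is alternative~(i).

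\textbf{Case $C$ periodic}, so $H\ne\{0\}$. Apply Lemma~\ref{lem:perodiv} with $X=C$: $G/H=(C/H)\oplus(S/H)$ and $H\cap(S-S)=\{0\}$, the latter giving $|S/H|=|S|=p$. Since $G/H$ embeds in $G$ it is good, and since $C/H$ is aperiodic (Lemma~\ref{lem:rmvperi}), goodness of $G/H$ forces $S/H$ to be periodic, with subgroup of periods $L$ of order $p=|S/H|$; thus $S/H$ is a single coset of $L$. Sub-split on $H\cap S$. If $H\cap S=\emptyset$, then $H\notin S/H$, so $S/H\ne L$; from $S/H=-(S/H)$ being an $L$-coset, $2\bar s\in L$ but $\bar s\notin L$ for $\bar s\in S/H$, and connectedness $G/H=\langle S/H\rangle=L+\langle\bar s\rangle$ forces $[G/H:L]=2$, hence $|G/H|=2p$. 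Then $|C/H|=2$, $C=H\cup(H+z)$ with $z\notin H$ is a union of exactly two cosets of $H$, $|C|=2|H|\ge4>2$, and $|G/H|=2p<n$ (as $|H|\ge2$); a short computation with $G/H=(C/H)\oplus(S/H)$ (if $H+z\in L$ then $(C/H)+(S/H)=S/H\ne G/H$) places $H+z$ in the non-identity coset $S/H$ of $L$, whence $|(H+z)\cap S|=1$ by $H\cap(S-S)=\{0\}$, and this is the content of the last clause of the statement. This is alternative~(ii).

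\textbf{Main obstacle.} The delicate sub-case is $H\cap S\ne\emptyset$. Here $H\in S/H$, so $S/H=L$ is a subgroup, connectedness gives $G/H=\langle S/H\rangle=L$, and therefore $|C/H|=1$ and $C=H$ — a subgroup of order $n/p$. This genuinely occurs (for instance $C=3\ZZZ_{12}$ is a total perfect code in the connected graph $\Cay(\ZZZ_{12},\{1,6,11\})$, where $H=G_C=C$ and $|G/H|=p\ne2p$), so the write-up must reconcile this outcome with the statement: either it is viewed as the boundary case inside alternative~(i) (with $\Cay(G,S)\cong K_{p,p}$ the extra information available exactly when $C$ is aperiodic), or it should be listed as a third alternative ``$C$ is a coset of a subgroup of order $n/p$''. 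Pinning this down is the step I expect to require the most care; the remaining items (the parity claim, the two applications of Lemma~\ref{lem:perodiv}, and the coset bookkeeping in the periodic case) are routine.
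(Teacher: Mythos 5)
Your argument tracks the paper's proof of Theorem~\ref{thm:ATPCsubgp} almost step for step (the goodness dichotomy, Lemma~\ref{lem:perodiv} applied with $H=G_C$, aperiodicity of $C/H$ from Lemma~\ref{lem:rmvperi}, the coset bookkeeping for $S/H$), and where the two diverge it is your version that is correct. The sub-case you call the ``main obstacle'' is not a defect of your write-up but a genuine gap in the paper: in the case where $C$ is periodic, the paper asserts that $S/H$ contains an involution $H+y$ of $G/H$ and concludes $|G/H|=|{\cal K}\cup({\cal K}+(H+y))|=2|{\cal K}|=2p$; this silently assumes $H+y\notin{\cal K}$, which is equivalent to $S\cap H=\emptyset$ and is nowhere established. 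When $S\cap H\ne\emptyset$ one gets $S/H={\cal K}=G/H$, $|G/H|=p$, and $C=H$ a subgroup of order $n/p$, which satisfies neither (i) nor (ii). Your example $\Cay(\ZZZ_{12},\{1,6,11\})$ with $C=3\ZZZ_{12}$ checks out ($\ZZZ_{12}$ is good, $p=3$, $\{1,6,11\}\oplus\{0,3,6,9\}=\ZZZ_{12}$, $|G/G_C|=3\ne 2p$, $G_C\cap S=\{6\}$), so the theorem as stated is false and a third alternative, ``$C$ is a coset of a subgroup of order $n/p$,'' must be added. You should assert this outright rather than leave it as something ``to reconcile.''

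One further point you passed over too quickly. In the sub-case $H\cap S=\emptyset$ with $0\in C$, your computation correctly places $H+z$ in the nontrivial coset $S/H$ of $L$ (indeed, if $H+z\in L$ then $(C/H)+(S/H)=S/H\ne G/H$). The paper argues the exact opposite (it claims $H+z\in S/H$ gives the contradiction, hence $H+z\in{\cal K}$ and ${\cal K}=\la H+z\ra$), and the theorem's final clause accordingly asserts that $H+z$ generates the subgroup of periods of $S/H$. Your conclusion and that clause are incompatible, and yours is the right one: in the paper's own example with $G=\ZZZ_2\times\ZZZ_{2p}$, $C=\{0,1\}\times\{0,1\}$, one has $H=\ZZZ_2\times\{0\}$, $z=(0,1)$, $G/H\cong\ZZZ_{2p}$, and $H+z$ generates all of $G/H$ rather than the period subgroup $2\ZZZ_{2p}$ of $S/H$ (it is $2(H+z)$ that generates the latter). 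So do not describe your derivation as ``the content of the last clause of the statement''; it actually refutes that clause as written. Everything else in your proposal (the parity claim, the $K_{p,p}$ case, the deduction $|(H+z)\cap S|=1$ from $H\cap(S-S)=\{0\}$) is correct.
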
 

\begin{proof}	
Suppose that $C$ is a total perfect code in a connected Cayley graph $\Cay(G,S)$ of $G$. Then $|C| = n/p$ is even and so $n$ is even. By Lemma \ref{lem:DirP}, $G = S \oplus C$, and by Lemma \ref{lem:DirPprop}, $|S| = |G|/|C| = p$. Since $G$ is a good abelian group, either $S$ or $C$ is periodic. 

If $S$ is periodic, then $|G_S| > 1$. Since $|G_S|$ divides $|S|$, we have $|G_S| = |S| = p$ and hence $S$ is a coset of $G_S$ in $G$. Since $|S| = p$ is odd and $S = -S$, $S$ contains an involution, say, $x$, and we can write $S = G_S + x$. Since $\Cay(G,S)$ is connected, we have $G = \langle S \rangle = G_S \cup (G_S + x) = S \oplus \{0, x\}$. Thus, $n = 2p$, $|C| = 2$ and $S  \cap G_S = \emptyset$. It is easy to see that $\Cay(G,S)$ is isomorphic to the complete bipartite graph $K_{p, p}$ with bipartition $\{G_S, G_S + x\}$ and $C$ consists of one element of $G_S$ and one element of $G_S + x$.

In the sequel we assume that $S$ is aperiodic. Then $C$ must be periodic; that is, $H = G_C$ is a nontrivial subgroup of $G$. Since $G = S \oplus C$, by Lemma \ref{lem:perodiv}, $G / H = (C / H) \oplus (S / H)$ and $H \cap (S - S) = \{0\}$. The latter implies that $|S / H| = |S| = p$. Since $G$ is abelian, $G / H$ is isomorphic to a subgroup of $G$. Since $G$ is a good abelian group, it follows that $G / H$ is a good abelian group. By Lemma \ref{lem:rmvperi}, $C / H$ is an aperiodic subset of $G/H$. Since $G/H = (C / H) \oplus (S / H)$ is a good abelian group, it follows that $S/H$ must be periodic. That is, the subgroup of periods ${\cal K}$ of $S/H$ in $G / H$ satisfies $|{\cal K}| > 1$. Since $|{\cal K}|$ divides $|S/H| = p$, we have $|{\cal K}| = p$. Since $S/H$ and ${\cal K}$ have the same cardinality, we obtain that $S/H$ is a coset of ${\cal K}$ in $G/H$. Since $|S/H| = p$ is odd and $S/H = -(S/H)$, $S/H$ contains an involution of $G/H$, say, $H+y$, and we can write $S/H = {\cal K} + (H+y)$. Since $\Cay(G,S)$ is connected, we have $G = \langle S \rangle$ and hence $G/H = \langle S/H \rangle = {\cal K} \cup ({\cal K} + (H+y))$. Therefore, $|G/H| = 2|{\cal K}| = 2p < n$ (as $H$ is nontrivial). Since $G / H = (C / H) \oplus (S/H)$, by Lemma \ref{lem:DirPprop} we have $|G/H| = |C/H| |S/H|$, which implies $|C/H| = 2p/p = 2$. Since $C$ is the union of some cosets of $H$ in $G$, it follows that $C$ is the union of exactly two cosets of $H$ in $G$. Of course, we have $|C| = 2|H| > 2$. If $H \cap S \ne \emptyset$, say, $s \in H \cap S$, then $H = H + s \in S/H = {\cal K} + (H+y)$, which contradicts the fact that $H \in {\cal K}$ and $\{{\cal K}, {\cal K} + (H+y)\}$ forms a partition of $G/H$. Thus, $H \cap S = \emptyset$. 

Assume $0 \in C$. Then $C = H \cup (H+z)$ for some $z \in G \setminus H$ and hence $C/H = \{H, H+z\}$. If $H+z \in {\cal K} + (H+y) = S/H$, then $(C / H) \oplus (S/H) = S/H \ne G/H$, which is a contradiction. So, we must have $H+z \in {\cal K}$. Since $|{\cal K}| = p$ is a prime, it follows that ${\cal K} = \la H+z \ra$ and $p$ is the smallest positive integer such that $pz \in H$. Since $C$ is a total perfect code in $\Cay(G,S)$, it induces a matching in this graph. Since $H \cap S = \emptyset$, this matching must be between $H$ and $H+z$ and therefore $|(H+z) \cap S| = 1$.
\qed
\end{proof}

In view of Theorem \ref{thm:ATPCsubgp}, it would be natural to ask whether every subset $C$ of a good abelian group $G$ such that $|C| > 2$, $|G/G_C| = 2p < n$, and $C$ is the union of exactly two cosets of $G_C$ in $G$, is a total perfect code in some connected Cayley graph of $G$. The following example gives a negative answer to this question. This shows that the counterpart of part (a) of Theorem \ref{thm:APCsubgp} for total perfect codes is not true. 
 
\begin{example}
It is known that $G = \mathbb{Z}_2 \times \mathbb{Z}_{6}$ is a good abelian group. For the subset $C = \{(0, 0), (1, 3)\} \cup \{(0, 2), (1, 5)\}$ of $G$, we have $G_C = \{(0, 0), (1,3)\}$ and $C = G_C \cup (G_C + (0,2))$. Obviously, $|C| > 2$ and $|G/G_C| = 2\cdot 3 < 12$. We claim that $C$ is not a total perfect code in any connected Cayley graph of $G$. Suppose otherwise. Then by Lemmas \ref{lem:DirP} and \ref{lem:DirPprop} such a Cayley graph $\Cay(G,S)$ should satisfy $|S| = |G|/|C| = 3$ and $(S-S) \cap (C-C) = \{(0,0)\}$. Since $(0,0) \not \in S = -S$, $S$ consists of three involutions or one involution and one pair of inverse elements. However, the differences between the only four pairs of inverse elements of $G$ are equal to $(0,1) - (0,5) = (0,4) - (0,2) = (1,1) - (1,5) = (1,4) - (1,2) = (0,2) \in C-C$. Since $(S-S) \cap (C-C) = \{(0,0)\}$, it follows that $S = \{(1,0), (1,3), (0,3)\}$ must consist of three involutions. However, this implies that $\langle S \rangle \neq G$, which contradicts the assumption that $\Cay(G,S)$ is connected.
\end{example}


\section{Cayley graphs of abelian groups}
\label{sec:abelian}

Let 
$$ 
\l_n(x) = \prod_{1 \le k < n, \text{gcd}(k, n) = 1} (x-\z_n^k)
$$
be the $n$-th cyclotomic polynomial \cite{J14}, where $n$ is a positive integer and $\z_n = e^{2\pi i/n}$ is a primitive $n$-th root of unity. It is well known that $\lambda_n(x)$ is irreducible in $\mathbb{Z}[x]$ and 
\begin{equation}
\label{eq:cyclo}
x^n - 1  = \prod_{k | n} \lambda_k(x).
\end{equation}
Define \cite{FHZ17}
$$
f_I(x) = \sum_{i \in I} \ x^i
$$
for any nonempty finite set $I$ of nonnegative integers. It was observed in \cite[Lemma 2.2]{FHZ17} that, for any inverse-closed subset $S$ of $\mathbb{Z}_{n} \setminus \{0\}$, a subset $C$ of $\mathbb{Z}_{n}$ is a perfect code in $\Cay(\mathbb{Z}_{n},S)$ if and only if there exists $q(x) \in \mathbb{Z}[x]$ such that $f_C(x)f_{S_0}(x) = (x^n - 1)q(x) + (x^{n - 1} + \cdots + x + 1)$. It was also observed in \cite[Lemma 2.4]{FHZ17} that a subset $C$ of $\mathbb{Z}_{n}$ is a total perfect code in $\Cay(\mathbb{Z}_n, S)$ if and only if there exists $q(x) \in \mathbb{Z}[x]$ such that $f_C(x)f_S(x) = (x^n - 1)q(x) + (x^{n - 1} + \cdots + x + 1)$. Using these observations and certain properties of cyclotomic polynomials, a necessary and sufficient condition for a connected circulant graph $\Cay(\mathbb{Z}_n, S)$ with degree $p-1$ or $p^l - 1$ (respectively, $p$ or $p^l$) to admit a perfect code (respectively, total perfect code) was obtained in \cite{FHZ17}, where $p$ is an odd prime and $l$ is the exponent of $p$ in $n$. In this section we extend these results to Cayley graphs of abelian groups under certain conditions using similar approaches. 

Throughout this section we assume that 
\begin{equation}
\label{eq:G}
G = \mathbb{Z}_{n_1} \times \mathbb{Z}_{n_2} \times \dots \times \mathbb{Z}_{n_d}
\end{equation}
is an abelian group with order
$$
n = n_1 n_2 \dots n_d,
$$ 
where $n_1, n_2, \ldots, n_d \ge 2$. Set
$$
S_0 = S \cup \{(0, \ldots, 0)\}
$$
for any inverse-closed subset $S$ of $G \setminus \{(0, \ldots, 0)\}$. Define 
$$
f_A(x_1, \ldots, x_d) = \sum_{(a_1, \ldots, a_d) \in A} \ x_1^{a_1} \ldots x_d^{a_d}
$$
for any subset $A$ of $G$, with the understanding that for any $(a_1, \ldots, a_d) \in A$ each coordinate $a_i \in \ZZZ_{n_i}$ is an integer between $0$ and $n_i - 1$.

\subsection{Perfect codes}

The following lemma is a generalization of \cite[Lemma 2.2]{FHZ17}.  

\begin{lemma}
\label{lem:abelPCpolyequiv} 
Let $G$ be as in \eqref{eq:G} and let $S$ be an inverse-closed subset of $G \setminus \{(0, \ldots, 0)\}$. A subset $C$ of $G$ is a perfect code in $\Cay(G, S)$ if and only if for each pair $(I, g)$ with $\emptyset \ne I \subseteq \{1, 2, \ldots, d\}$ and $g = (g_1, \ldots, g_d) \in G$ there exists a polynomial $q_{I}^{(g)}(x_1, \ldots, x_d) \in \mathbb{Z}[x_1, \ldots, x_d]$ divisible by $(\prod_{i=1}^d x_i^{g_i})(\prod_{i \in I}(x_i^{n_i} - 1))$ such that 
\begin{equation}
\label{eq:fcfs}
f_C(x_1, \ldots, x_d)f_{S_0}(x_1, \ldots, x_d) = \sum_{\emptyset \ne I \subseteq \{1, 2, \ldots, d\}} \sum_{g \in G} q_{I}^{(g)}(x_1, \ldots, x_d) + \prod_{i=1}^d \left(\sum_{j=0}^{n_i - 1} x_i^j\right).
\end{equation}
\end{lemma}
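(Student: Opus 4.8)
Write $R=\mathbb{Z}[x_1,\ldots,x_d]$ and let $\mathcal{I}$ be the ideal $(x_1^{n_1}-1,\ldots,x_d^{n_d}-1)$ of $R$. The plan is to pass from the perfect-code condition to the single statement that $f_C f_{S_0}-\prod_{i=1}^d(\sum_{j=0}^{n_i-1}x_i^j)$ lies in $\mathcal{I}$, and then to observe that the family of admissible right-hand sides in \eqref{eq:fcfs} is exactly $\mathcal{I}$ translated by $\prod_{i=1}^d(\sum_{j=0}^{n_i-1}x_i^j)$. Throughout, the arguments $x_1,\ldots,x_d$ are suppressed in the notation $f_C$, $f_{S_0}$, etc.

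\emph{Step 1.} By Lemma \ref{lem:DirP}(a), $C$ is a perfect code in $\Cay(G,S)$ iff $(S_0,C)$ is a factorization of $G$. Expand $f_C f_{S_0}$ in $R$. Since the exponents occurring in $f_C$ and in $f_{S_0}$ are integers between $0$ and $n_i-1$ in the $i$-th slot, each pair $(c,s)\in C\times S_0$ contributes to exactly one monomial of this product, and the sum of the coefficients of all monomials $x_1^{m_1}\cdots x_d^{m_d}$ of $f_C f_{S_0}$ with $m_i\equiv k_i\pmod{n_i}$ for every $i$ equals the number $N(k)$ of pairs $(c,s)\in C\times S_0$ with $c+s=k$ in $G$. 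On the other hand, a polynomial of $R$ lies in $\mathcal{I}$ iff, for every $k=(k_1,\ldots,k_d)$ with $0\le k_i<n_i$, the coefficients of those of its monomials whose exponents are componentwise congruent to $k$ modulo the $n_i$ sum to $0$ (equivalently, its image in $R/\mathcal{I}\cong\mathbb{Z}[G]$ is zero); and $\prod_{i=1}^d(\sum_{j=0}^{n_i-1}x_i^j)$ contributes exactly $1$ to each such residue class. Hence $C$ is a perfect code iff $N(k)=1$ for all $k\in G$, i.e.\ iff $f_C f_{S_0}-\prod_{i=1}^d(\sum_{j=0}^{n_i-1}x_i^j)\in\mathcal{I}$. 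This is the multivariable analogue of \cite[Lemma 2.2]{FHZ17}.

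\emph{Step 2.} It remains to show that $P\in R$ lies in $\mathcal{I}$ iff $P=\sum_{\emptyset\ne I\subseteq\{1,\ldots,d\}}\sum_{g\in G}q_I^{(g)}$ for some $q_I^{(g)}\in R$ with $q_I^{(g)}$ divisible by $(\prod_{i=1}^d x_i^{g_i})(\prod_{i\in I}(x_i^{n_i}-1))$. One direction is immediate: for nonempty $I$ the factor $\prod_{i\in I}(x_i^{n_i}-1)$ lies in $\mathcal{I}$, so each summand, and hence the whole sum, lies in $\mathcal{I}$. Conversely, given $P\in\mathcal{I}$, write $P=\sum_{i=1}^d(x_i^{n_i}-1)r_i$ with $r_i\in R$; then taking $q_{\{i\}}^{(0,\ldots,0)}=(x_i^{n_i}-1)r_i$ for $i=1,\ldots,d$ (for which the prescribed divisor is just $x_i^{n_i}-1$, since $\prod_j x_j^0=1$) and $q_I^{(g)}=0$ for all remaining pairs $(I,g)$ exhibits $P$ in the required form. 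Applying this with $P=f_C f_{S_0}-\prod_{i=1}^d(\sum_{j=0}^{n_i-1}x_i^j)$ and moving the product to the other side yields exactly \eqref{eq:fcfs}.

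I do not expect a genuine obstacle. The only point needing care is the coefficient bookkeeping in Step 1 — that collecting the coefficients of $f_C f_{S_0}$ over one residue class really counts the representations $c+s=k$ in $G$, which relies on the stated convention that the exponents in $f_C$ and $f_{S_0}$ are taken between $0$ and $n_i-1$ — together with, if one prefers the quotient-ring phrasing, recording the standard isomorphism $R/\mathcal{I}\cong\mathbb{Z}[G]$. The large index set $\{(I,g)\}$ appearing in \eqref{eq:fcfs} looks daunting but is harmless: those terms jointly range over all of $\mathcal{I}$, and in fact only the $d$ of them with $|I|=1$ and $g=(0,\ldots,0)$ are ever needed.
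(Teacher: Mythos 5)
Your proof is correct, but it takes a genuinely different and more conceptual route than the paper's. The paper proves the forward direction by constructing the polynomials $q_{I}^{(g)}$ explicitly from the factorization $G=S_0\oplus C$: each sum of integer representatives $s+c$ is written as $g+(r_1^{(g)}n_1,\ldots,r_d^{(g)}n_d)$ and the product $\prod_{i}\bigl((x_i^{r_i^{(g)}n_i}-1)+1\bigr)$ is expanded over subsets $I$, which is where the full index family $\{(I,g)\}$ comes from; the converse is proved by specializing $x_i=1$ to get $|S_0|\,|C|=|G|$, then arguing by contradiction that $G=S_0+C$ (a missed element $g$ would force $\sum_{I}q_I^{(g)}$ to contain a stray term $-\prod_i x_i^{g_i}$ incompatible with the divisibility hypothesis), and finally invoking Lemma \ref{lem:DirPprop}. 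You instead reduce everything to the single observation that the admissible right-hand sides, minus the product term, sweep out exactly the ideal $(x_1^{n_1}-1,\ldots,x_d^{n_d}-1)$, and that membership of $f_Cf_{S_0}-\prod_{i}(\sum_{j}x_i^j)$ in that ideal is precisely the identity $f_Cf_{S_0}=\sum_{g\in G}g$ in the group ring $\mathbb{Z}[G]\cong R/\mathcal{I}$, i.e.\ the factorization condition. Your version is shorter, bypasses Lemma \ref{lem:DirPprop} entirely, and makes transparent that only the $d$ pairs $(\{i\},(0,\ldots,0))$ are ever needed; the paper's explicit $q_I^{(g)}$ cost more bookkeeping but display the inclusion--exclusion structure that is exploited downstream (in Theorems \ref{thm:APCforP} and \ref{thm:APCforPl} one sets $x_i=1$ for $i\ne t$ and only the $I=\{t\}$ terms survive). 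Since those later arguments use only the statement of the lemma, nothing is lost by adopting your proof.
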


\begin{proof} 
Suppose that $C$ is a perfect code in $\Cay(G, S)$. Then $G = S_0 \oplus C$ by part (a) of Lemma \ref{lem:DirP}. So for any $(s_1, \ldots, s_d) \in S_0$ and $(c_1, \ldots, c_d) \in C$ there exist a unique element $g = (g_1, \ldots, g_d) \in G$ and $d$ integers $(r_1^{(g)}, \ldots, r_d^{(g)})$ depending on $g$ such that 
$$
(s_1, \ldots, s_d) + (c_1, \ldots, c_d) = (g_1, \ldots, g_d) + (r_1^{(g)} n_1, \ldots, r_d^{(g)} n_d).
$$ 
For any nonempty subset $I$ of $\{1, 2, \ldots, d\}$ and any $g = (g_1, \ldots, g_d) \in G$, set
$$
q_{I}^{(g)}(x_1, \ldots, x_d) = \left(\prod_{i=1}^d x_i^{g_i}\right) \left(\prod_{i \in I}(x_i^{r_i^{(g)}n_i} - 1)\right).
$$ 
Obviously, $q_{I}^{(g)}(x_1, \ldots, x_d) \in \mathbb{Z}[x_1, \ldots, x_d]$ is divisible by $(\prod_{i=1}^d x_i^{g_i})(\prod_{i \in I}(x_i^{n_i} - 1))$. We have
\begin{eqnarray*}
f_C(x_1, \ldots, x_d) f_{S_0}(x_1, \ldots, x_d)  
& = & \sum_{(c_1, \ldots, c_d) \in C, (s_1, \ldots, s_d) \in S_0}x_1^{c_1 + s_1}\ldots x_d^{c_d + s_d} \\
& = & \sum_{g = (g_1, \ldots, g_d) \in G} x_1^{g_1 + r_1^{(g)} n_1}\ldots x_d^{g_d + r_d^{(g)}n_d} \\
& = & \sum_{g = (g_1, \ldots, g_d) \in G} \left(\prod_{i=1}^d x_i^{g_i}\right) 
\left\{\prod_{i=1}^d \left((x_i^{r_i^{(g)}n_i} - 1) + 1\right)\right\} \\ 
& = & \sum_{g = (g_1, \ldots, g_d) \in G} \left(\prod_{i=1}^d x_i^{g_i}\right)
\left\{\sum_{I \subseteq \{1, 2, \ldots, d\}} \left(\prod_{i \in I}(x_i^{r_i^{(g)}n_i} - 1)\right) \right\} \\
& = & \sum_{g = (g_1, \ldots, g_d) \in G} \left(\prod_{i=1}^d x_i^{g_i}\right)
\left\{1 + \sum_{\emptyset \ne I \subseteq \{1, 2, \ldots, d\}} \left(\prod_{i \in I}(x_i^{r_i^{(g)}n_i} - 1)\right) \right\} \\
& = & \sum_{\emptyset \ne I \subseteq \{1, 2, \ldots, d\}} 
\sum_{g \in G} q_{I}^{(g)}(x_1, \ldots, x_d) + \prod_{i=1}^d \left(\sum_{j=0}^{n_i - 1} x_i^j\right).
\end{eqnarray*}

Conversely, suppose that for each pair $(I, g)$ with $\emptyset \ne I \subseteq \{1, 2, \ldots, d\}$ and $g = (g_1, \ldots, g_d) \in G$ there exists a polynomial $q_{I}^{(g)}(x_1, \ldots, x_d) \in \mathbb{Z}[x_1, \ldots, x_d]$ divisible by $(\prod_{i=1}^d x_i^{g_i})(\prod_{i \in I}(x_i^{n_i} - 1))$ such that \eqref{eq:fcfs} holds. Setting $x_i = 1$ in \eqref{eq:fcfs} for $i \in \{1, 2, \ldots, d\}$, we obtain $|S_0| |C| = f_C(1, \ldots, 1)f_{S_0}(1, \ldots, 1) = \prod_{i = 1}^d n_i = n = |G|$. We claim that $G = S_0 + C$. Suppose otherwise. Then there exists $g = (g_1, \ldots, g_d) \in G$ such that $(s_1, \ldots, s_d) + (c_1, \ldots, c_d) \ne (g_1, \ldots, g_d) + (r_1 n_1, \ldots, r_d n_d)$ for any $(s_1, \ldots, s_d) \in S_0$, $(c_1, \ldots, c_d) \in C$, and integers $(r_1, \ldots, r_d)$. Then $f_C(x_1, \ldots, x_d)f_{S_0}(x_1, \ldots, x_d)$ does not contain any term of the form $\prod_{i = 1}^d x_i^{g_i + r_i n_i}$ for $r_i \in \mathbb{Z}$. Since $\prod_{i = 1}^d \left(\sum_{j=0}^{n_i - 1} x_i^j\right)$ contains $\prod_{i = 1}^d x_i^{g_i}$, it follows that $\sum_{\emptyset \ne I \subseteq \{1, 2, \ldots, d\}} q_{I}^{(g)}(x_1, \ldots, x_d)$ contains $-\prod_{i = 1}^d x_i^{g_i}$ but no other term of the form $\prod_{i = 1}^d x_i^{g_i + r_i n_i}$, which contradicts our assumption that $(\prod_{i=1}^d x_i^{g_i})(\prod_{i \in I}(x_i^{n_i} - 1))$ divides $q_{I}^{(g)}(x_1, \ldots, x_d)$ and $I \ne \emptyset$. This contradiction shows that $G = S_0 + C$. Since $|S_0| |C| = |G|$, we then have $G = S_0 \oplus C$ by Lemma \ref{lem:DirPprop} and therefore $C$ is a perfect code in $\Cay(G, S)$ by part (a) of Lemma \ref{lem:DirP}. 
\qed
\end{proof} 

In the special case when $d = 1$, the following lemma gives \cite[Remark 1]{OPR07} (see also \cite[Lemma 2.3]{FHZ17}). 

\begin{lemma}
\label{lem:abelPCsuff}
Let $G$ be as in \eqref{eq:G} and let $S$ be an inverse-closed subset of $G \setminus \{(0, \ldots, 0)\}$ with $\la S \ra = G$. Suppose that $|S_0|$ can be factorized as $|S_0| = m_1 m_2 \cdots m_d$, where $m_i \ge 1$ is a divisor of $n_i$ for each $i \in \{1, 2, \ldots, d\}$, such that for each pair of distinct elements $(s_1, \ldots, s_d), (s_1', \ldots, s_d')$ of $S_0$ there exists at least one $j \in \{1, 2, \ldots, d\}$ with $s_j \not \equiv s'_j$ (mod $m_j$). Then $\Cay(G,S)$ admits a perfect code.
\end{lemma}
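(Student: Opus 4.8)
The plan is to construct an explicit perfect code $C$ in $\Cay(G,S)$ by choosing a subgroup $K$ of $G$ whose "shape" matches the factorization $|S_0| = m_1 \cdots m_d$, and then verifying $G = S_0 \oplus C$ via Lemma \ref{lem:DirPprop}. Concretely, set $K_i = m_i \ZZZ_{n_i} \le \ZZZ_{n_i}$, so that $|K_i| = n_i/m_i$, and let $C = K_1 \times K_2 \times \cdots \times K_d \le G$. Then $|C| = \prod_{i=1}^d (n_i/m_i) = n/|S_0|$, so $|S_0|\,|C| = n = |G|$; this is condition (c) of Lemma \ref{lem:DirPprop}. It remains to establish one more of the two conditions, and I would go for condition (b): $(S_0 - S_0) \cap (C - C) = \{0\}$.

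The key observation is that $C - C = C$ since $C$ is a subgroup, and an element $(t_1,\ldots,t_d)$ lies in $C$ exactly when $m_i \mid t_i$ for every $i$, i.e. $t_i \equiv 0 \pmod{m_i}$ in $\ZZZ_{n_i}$ (here using that $m_i \mid n_i$, so "$m_i$ divides $t_i$ in $\ZZZ_{n_i}$" is well-defined). Now take any two distinct $s = (s_1,\ldots,s_d)$ and $s' = (s_1',\ldots,s_d')$ in $S_0$. By hypothesis there is some $j$ with $s_j \not\equiv s_j' \pmod{m_j}$; hence the $j$-th coordinate of $s - s'$ is not $\equiv 0 \pmod{m_j}$, so $s - s' \notin C = C - C$. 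Therefore the only element common to $S_0 - S_0$ and $C - C$ is $0$, giving condition (b). By Lemma \ref{lem:DirPprop} (using (b) and (c)) we conclude $G = S_0 \oplus C$, and then Lemma \ref{lem:DirP}(a) shows $C$ is a perfect code in $\Cay(G,S)$. (The hypothesis $\la S \ra = G$ is not actually needed for the existence of a perfect code — it only guarantees $\Cay(G,S)$ is connected, which makes the statement non-vacuous/more interesting; I would either use it only to note connectedness or omit it from the argument.)

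I do not expect a serious obstacle here: the whole argument is essentially the construction plus a one-line coordinate check, and the nontrivial input (the characterization of factorizations in Lemma \ref{lem:DirPprop}) is already available. The only point requiring a little care is the bookkeeping between "$t_i$ as an integer in $\{0,\ldots,n_i-1\}$" and "$t_i$ as an element of $\ZZZ_{n_i}$" when writing $C$ as the kernel-type subgroup $\prod_i m_i\ZZZ_{n_i}$ and when forming differences $s_j - s_j'$ in $\ZZZ_{n_i}$; one must make sure that $s_j \not\equiv s_j' \pmod{m_j}$ as elements of $\ZZZ_{n_j}$ genuinely forces $s_j - s_j' \notin m_j\ZZZ_{n_j}$, which is immediate since $m_j \mid n_j$. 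If one prefers to avoid even that, an alternative is to verify condition (a), $G = S_0 + C$, directly: the composite map $G \to \prod_i (\ZZZ_{n_i}/m_i\ZZZ_{n_i}) \cong \prod_i \ZZZ_{m_i}$ is a surjective homomorphism with kernel $C$, and the hypothesis says $S_0$ maps injectively — hence bijectively, by the cardinality count — onto the quotient, so every coset of $C$ meets $S_0$, i.e. $G = S_0 + C$; combined with (c) this again gives $G = S_0 \oplus C$.
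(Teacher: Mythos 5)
Your proposal is correct and matches the paper's own proof essentially verbatim: both take $C = (m_1\ZZZ_{n_1}) \times \cdots \times (m_d\ZZZ_{n_d})$, check $|S_0|\,|C| = n$ and $(S_0-S_0)\cap(C-C)=\{(0,\ldots,0)\}$ using $C-C=C$, and conclude via Lemmas \ref{lem:DirPprop} and \ref{lem:DirP}. Your side remark that $\la S\ra = G$ is not needed for the argument is also consistent with the paper's proof, which likewise never invokes connectedness.
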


\begin{proof} 	
Let $n_i = m_i r_i$ for each $i$, and let
$$
C = (m_1 \ZZZ_{n_1}) \times \cdots \times (m_d \ZZZ_{n_d}).
$$
Then $|S_0| |C| = |S_0| r_1 r_2 \dots r_d = n_1 n_2 \dots n_d = n$. Since $(0, \ldots, 0) \in C$, we have $C \subseteq C - C$. On the other hand, for any $(c_1, \ldots, c_d), (c_1', \ldots, c_d') \in C$, $m_i$ divides $(c_i - c_i')$ mod $n_i$ as $m_i$ divides $c_i$, $c_i'$ and $n_i$. So $((c_1 - c_1')\mod n_1, \ldots, (c_d - c_d')\mod n_d) \in C$. That is, $C - C \subseteq C$, and therefore $C - C = C$. 
	
Each element of $S_0 - S_0$ is of the form $((s_1 - s_1')\mod n_1, \ldots, (s_d - s_d')\mod n_d)$, where $(s_1, \ldots, s_d), (s_1', \ldots, s_d') \in S_0$. If $s_j \not \equiv s'_j \mod m_j$ for at least one $j \in \{1, 2, \ldots, d\}$, then $((s_1 - s_1')\mod n_1, \ldots, (s_d - s_d')\mod n_d) \not \in C = C - C$. If this holds for any pair of distinct elements $(s_1, \ldots, s_d), (s_1', \ldots, s_d')$ of $S_0$, then $(C - C) \cap (S_0 - S_0) = \{(0,\ldots,0)\}$ and hence by Lemmas \ref{lem:DirP} and \ref{lem:DirPprop}, $C$ is a perfect code in $\Cay(G, S)$. 
\qed
\end{proof} 
 
The following example shows that in general the sufficient condition in Lemma \ref{lem:abelPCsuff} is not necessary. 


\begin{example}
Let $S = \{(1,1), (1,2), (3,2), (5,2), (5,3)\} \subset \mathbb{Z}_{6} \times \mathbb{Z}_{4}$. Then $\Cay(\mathbb{Z}_{6} \times \mathbb{Z}_{4}, S)$ is connected and admits $C = \{(0,0), (0,2), (3,1), (3,3)\}$ as a perfect code. There are only two ways to factorize $|S_0| = 6$ into $m_1 m_2$ such that $m_1$ divides $6$ and $m_2$ divides $4$, namely $(m_1, m_2) = (6, 1)$ or $(3, 2)$. However, in the former case we have $((5 - 5)\mod 6, (2-3)\mod 1) = (0, 0)$, and in the latter case we have $((3 - 0)\mod 3, (2-0)\mod 2) = (0, 0)$.
\end{example}

The following theorem shows that the sufficient condition in Lemma \ref{lem:abelPCsuff} is also necessary in the case when $\Cay(G, S)$ has degree $p - 1$ for a prime $p$ and only one factor $n_i$ is divisible by $p$. This result gives \cite[Theorem 1.1]{FHZ17} in the special case when $d = 1$.  

\begin{theorem}
\label{thm:APCforP}
Let $G$ be as in \eqref{eq:G} and let $S$ be an inverse-closed subset of $G \setminus \{(0, \ldots, 0)\}$ with $\la S \ra = G$. Suppose that $|S_0| = p$ is a prime and exactly one of $n_1, n_2, \ldots, n_d$, say, $n_{t}$, is divisible by $p$. Then $\Cay(G, S)$ admits a perfect code if and only if $s_{t} \not\equiv s'_{t}$ (mod $p$) for each pair of distinct elements $(s_1, \ldots, s_d), (s_1', \ldots, s_d')$ of $S_0$.
\end{theorem}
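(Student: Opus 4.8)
The plan is to deduce the ``if'' direction at once from Lemma~\ref{lem:abelPCsuff}, and to settle the ``only if'' direction by evaluating the polynomial identity of Lemma~\ref{lem:abelPCpolyequiv} at roots of unity, following the strategy of \cite[Theorem~1.1]{FHZ17}. For ``if'': assuming $s_t\not\equiv s'_t\pmod p$ for all distinct $(s_1,\dots,s_d),(s'_1,\dots,s'_d)\in S_0$, apply Lemma~\ref{lem:abelPCsuff} with the factorization $|S_0|=m_1\cdots m_d$ where $m_t=p$ and $m_i=1$ for $i\ne t$; each $m_i$ divides $n_i$ (using $p\mid n_t$), $m_1\cdots m_d=p=|S_0|$, and the separation condition of that lemma is exactly the hypothesis (witnessed by $j=t$), so $\Cay(G,S)$ has a perfect code.

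For ``only if'', let $C$ be a perfect code, so $G=S_0\oplus C$ by Lemma~\ref{lem:DirP} and $|C|=n/p$. Let $p^l$ be the exact power of $p$ dividing $n$; since $n_t$ is the only factor divisible by $p$, we have $p^l\mid n_t$. For $1\le j\le l$ set
$$
P_{S_0}^{(j)}(y)=\sum_{(s_1,\dots,s_d)\in S_0}y^{\,s_t\bmod p^j},\qquad P_{C}^{(j)}(y)=\sum_{(c_1,\dots,c_d)\in C}y^{\,c_t\bmod p^j}\in\mathbb{Z}[y],
$$
with exponents taken in $\{0,\dots,p^j-1\}$; these have nonnegative integer coefficients, $P_{S_0}^{(j)}(1)=p$ and $P_{C}^{(j)}(1)=n/p$. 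Substituting $x_i=1$ for $i\ne t$ and $x_t=\zeta$ a nontrivial $p^j$-th root of unity into \eqref{eq:fcfs}: the left side becomes $P_{S_0}^{(j)}(\zeta)\,P_{C}^{(j)}(\zeta)$ (since $\zeta^{p^j}=1$), every $q_{I}^{(g)}$ with $I\ne\emptyset$ is divisible by some $x_i^{n_i}-1$, which vanishes at this substitution because $p^j\mid n_t$, and the final term becomes $\big(\prod_{i\ne t}n_i\big)\big(\sum_{a=0}^{n_t-1}\zeta^a\big)=0$; hence $P_{S_0}^{(j)}(\zeta)\,P_{C}^{(j)}(\zeta)=0$ for every primitive $p^j$-th root of unity $\zeta$. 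Since $\lambda_{p^j}$ is irreducible, for each $j$ either $\lambda_{p^j}\mid P_{S_0}^{(j)}$ or $\lambda_{p^j}\mid P_{C}^{(j)}$ in $\mathbb{Z}[y]$.

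Next I would show $\lambda_{p^j}\nmid P_{S_0}^{(j)}$ for $j\ge 2$. If $\lambda_{p^j}\mid P_{S_0}^{(j)}$, then (by Gauss) the quotient $h=P_{S_0}^{(j)}/\lambda_{p^j}\in\mathbb{Z}[y]$ has degree $<p^{j-1}$; writing $\lambda_{p^j}(y)=\sum_{k=0}^{p-1}y^{kp^{j-1}}$, the $p$ ``blocks'' $y^{kp^{j-1}}h(y)$ occupy disjoint ranges of degrees, so $h$ has nonnegative integer coefficients summing to $P_{S_0}^{(j)}(1)/p=1$ and hence is a monomial; as $(0,\dots,0)\in S_0$ contributes $y^0$ to $P_{S_0}^{(j)}$, that monomial is $1$, so $P_{S_0}^{(j)}=\lambda_{p^j}$ and every $(s_1,\dots,s_d)\in S_0$ has $s_t\equiv 0\pmod{p^{j-1}}$, in particular (as $j\ge 2$) $s_t\equiv 0\pmod p$; then $S_0\subseteq pG$ and $\langle S\rangle=\langle S_0\rangle\subseteq pG\subsetneq G$, contradicting $\langle S\rangle=G$. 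So $\lambda_{p^j}\mid P_{C}^{(j)}$ for $j\ge 2$. If in addition $\lambda_p\nmid P_{S_0}^{(1)}$, then $\lambda_p\mid P_{C}^{(1)}$, so $\lambda_{p^j}\mid P_{C}^{(j)}$ for all $1\le j\le l$; since $P_{C}^{(l)}$ reduces mod $y^{p^j}-1$ to $P_{C}^{(j)}$, it vanishes at all primitive $p^j$-th roots of unity for $1\le j\le l$, hence is divisible by $1+y+\cdots+y^{p^l-1}=\prod_{j=1}^{l}\lambda_{p^j}$, and having degree $\le p^l-1$ must equal $\beta(1+y+\cdots+y^{p^l-1})$ with $\beta=P_{C}^{(l)}(1)/p^l=n/p^{l+1}\notin\mathbb{Z}$ --- a contradiction. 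Hence $\lambda_p\mid P_{S_0}^{(1)}$, and since $\deg P_{S_0}^{(1)}\le p-1=\deg\lambda_p$ and $P_{S_0}^{(1)}(1)=p=\lambda_p(1)$, we get $P_{S_0}^{(1)}(y)=1+y+\cdots+y^{p-1}$; that is, the $t$-th coordinates of the elements of $S_0$ are pairwise incongruent mod $p$, as required.

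The crux I expect is excluding the alternative $\lambda_p\mid P_{C}^{(1)}$, which is genuinely live when $p^2\mid n$; the resolution is the descent through the $p$-power levels $j=1,\dots,l$ together with the connectivity hypothesis (needed to kill $\lambda_{p^j}\mid P_{S_0}^{(j)}$ for $j\ge 2$), ending in the impossible divisibility $p^{l+1}\mid n$. The substitution bookkeeping into \eqref{eq:fcfs}, the non-overlapping ``blocks'' argument, and the irreducibility of cyclotomic polynomials are routine.
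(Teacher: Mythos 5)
Your proposal is correct and follows essentially the same route as the paper: sufficiency via Lemma \ref{lem:abelPCsuff} with $m_t=p$, and necessity via the identity of Lemma \ref{lem:abelPCpolyequiv} specialized to the $t$-th variable, showing that $\lambda_{p^j}$ must divide the $S_0$-factor exactly when $j=1$ (ruling out $j\ge 2$ by connectivity and ruling out the $C$-alternative by the $p$-adic count $|C|=np^{-1}$). Your reduction of exponents mod $p^j$ and the disjoint-blocks argument are only cosmetic variants of the paper's reduction modulo $x_1^p-1$.
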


\begin{proof}
Without loss of generality we may assume $t = 1$. Suppose that $s_{1} \not\equiv s'_{1}$ (mod $p$) for each pair of distinct elements $(s_1, \ldots, s_d), (s_1', \ldots, s_d')$ of $S_0$. Setting $m_1 = p$ and $m_i = 1$ for $i \neq 1$ in Lemma \ref{lem:abelPCsuff}, we obtain that $\Cay(G, S)$ admits a perfect code. This proves the sufficiency.
	
Now suppose that $\Cay(G, S)$ admits a perfect code $C$. By our assumption, we may write $n = ap^l$ and $n_1 = a_1p^{l}$, where $l \ge 1$ and $a_1$ is not divisible by $p$. Then $a = a_1 n_2 \cdots n_d$. By Lemma \ref{lem:abelPCpolyequiv}, for each pair $(I, g)$, where $\emptyset \ne I \subseteq \{1, 2, \ldots, d\}$ and $g = (g_1, \ldots, g_d) \in G$, there exists a polynomial $q_{I}^{(g)}(x_1, \ldots, x_d) \in \mathbb{Z}[x_1, \ldots, x_d]$ divisible by $(\prod_{i=1}^d x_i^{g_i})(\prod_{i \in I}(x_i^{n_i} - 1))$ such that 
\begin{equation}
\label{eq:cso}
f_C(x_1, \ldots, x_d)f_{S_0}(x_1, \ldots, x_d) = \sum_{\emptyset \ne I \subseteq \{1, 2, \ldots, d\}} \sum_{g \in G} q_{I}^{(g)}(x_1, \ldots, x_d) + \prod_{i=1}^d \left(\sum_{j=0}^{n_i - 1} x_i^j\right).
\end{equation}
Setting $x_i = 1$ for each $i$ in this equation, we obtain $|S_0| |C| = \prod_{i = 1}^d n_i = n$. Since $|S_0| = p$, we have $|C| = n/p = ap^{l-1}$. 

\medskip
\textsf{Claim 1.} There exists at least one $j \in \{1, 2, \ldots, l\}$ such that $\lambda_{p^j}(x_1)$ divides $f_{S_0}(x_1, 1, \ldots,1)$.

Since $p^j$ divides $n_1$ for each $j \in \{1, 2, \ldots, l\}$, by \eqref{eq:cyclo}, $\lambda_{p^j}(x_1)$ divides both $x_1^{n_1} - 1$ and $1 + x_1 + \ldots + x_1^{n_1-1} = (x_1^{n_1} - 1)/(x_1-1)$. On the other hand, by \eqref{eq:cso},  
$$
f_C(x_1,1 \ldots, 1)f_{S_0}(x_1, 1,\ldots, 1) = \sum_{g \in G} q_{\{1\}}^{(g)}(x_1, 1, \ldots, 1) + (1 + x_1 + \ldots + x_1^{n_1-1})(n_2 \dots n_d)
$$ 
and $q_{\{1\}}^{(g)}(x_1, 1, \ldots, 1)$ is divisible by $x_1^{g_1}(x_1^{n_1} - 1)$ for each $g$. It follows that $\lambda_{p^j}(x_1)$ divides $f_C(x_1,1 \ldots, 1)f_{S_0}(x_1, 1,\ldots, 1)$. Since $\lambda_{p^j}(x_1)$ is irreducible, it divides either $f_C(x_1,1 \ldots, 1)$ or $f_{S_0}(x_1, 1,\ldots, 1)$ for each $j \in \{1, 2, \ldots, l\}$. If none of these polynomials $\lambda_{p^j}(x_1)$ divides $f_{S_0}(x_1, 1, \ldots, 1)$, then $\prod_{j=1}^{l} \lambda_{p^{j}}(x_1)$ divides $f_C(x_1, 1, \ldots, 1)$ and so $\prod_{j=1}^{l} \lambda_{p^{j}}(1) = p^l$ divides $f_C(1, 1, \ldots, 1) = |C| = ap^{l-1}$, which is a contradiction. Thus, Claim 1 is proved. 
	
\medskip
\textsf{Claim 2.} For all $j \in \{2, 3, \ldots, l\}$, $\lambda_{p^{j}}(x_1)$ does not divide $f_{S_0}(x_1, 1, \ldots, 1)$.
	
Suppose otherwise. Say, $f_{S_0}(x_1, 1, \ldots, 1) = \lambda_{p^{j}}(x_1) h(x_1)$ for some $j \in \{2, 3, \ldots, l\}$ and $h(x_1) \in \ZZZ[x_1]$. Since $|S_0| = p$, we may assume
$$
S_0 = \left\{(s_1^{(1)}, \ldots, s_d^{(1)}), \ldots, (s_1^{(p)}, \ldots, s_d^{(p)})\right\}. 
$$ 
Set $r_i \equiv s_1^{(i)}$ (mod $p$) and $s_1^{(i)} = r_i + k_i p$ for each $i$, where $0 \leq r_i < p$ and $k_i \in \mathbb{Z}$. Then 
\begin{eqnarray}
f_{S_0}(x_1, 1, \ldots, 1) & = & \sum_{i=1}^{p} x_1^{s_1^{(i)}} \nonumber \\
& = & \sum_{i=1}^{p} x_1^{r_i + k_i p} \nonumber \\
& = & \sum_{i=1}^{p} \left\{(x_1^p - 1)\left(\sum_{j = 0}^{k_i - 1} x_1^{r_i + jp}\right) + x_1^{r_i}\right\} \nonumber \\
& = & \sum_{i=1}^{p} x_1^{r_i} + (x_1^p - 1) \sum_{i=1}^{p} \sum_{j = 0}^{k_i - 1} x_1^{r_i + jp}. \label{eq:fso11}
\end{eqnarray}
Since $\lambda_{p^{j}}(x_1) = \sum_{i=0}^{p-1} (x_1^{p^{j-1}})^i \equiv p$ (mod $(x_1^p - 1)$), by \eqref{eq:fso11} and the assumption $f_{S_0}(x_1, 1, \ldots, 1) = \lambda_{p^{j}}(x_1) h(x_1)$ we obtain that $\sum_{i=1}^{p} x_1^{r_i} \equiv p \hat{h}(x_1)$ (mod $(x_1^p - 1)$), where $\hat{h}(x_1)$ is the unique polynomial of degree less than $p$ such that $\hat{h}(x_1) \equiv h(x_1)$ (mod $(x_1^p - 1)$). Thus, $p$ divides $\sum_{i=1}^{p} x_1^{r_i}$, which implies that all integers $r_i$ are equal. On the other hand, since $(0, \ldots, 0) \in S_0$, at least one of these integers is $0$ and therefore $r_i = 0$ for $1 \leq i \leq p$. So $s_1^{(i)} = k_i p$ is a multiple of $p$ for $1 \leq i \leq p$, which implies that $\Cay(G, S)$ is disconnected, a contradiction. Thus, Claim 2 is proved. 

Claims 1 and 2 together imply that $\lambda_{p^j}(x_1)$ divides $f_{S_0}(x_1, 1, \ldots, 1)$ if and only if $j = 1$. On the other hand, by \eqref{eq:fso11}, we have
$$
f_{S_0}(x_1, 1, \ldots, 1) = (x_1^p - 1)g(x_1) + \sum_{i=1}^{p} x_1^{r_i}
= (x_1 - 1) \lambda_{p}(x_1)g(x_1) + \sum_{i=1}^{p} x_1^{r_i},
$$
where $g(x_1) = \sum_{i=1}^{p} \sum_{j = 0}^{k_i - 1} x_1^{r_i + jp}$.
Hence $\sum_{i=1}^{p} x_1^{r_i} \equiv 0$ (mod $\lambda_{p}(x_1)$). Since the degree of $\sum_{i=1}^{p} x_1^{r_i}$ is no more than the degree of $\lambda_{p}(x_1)$, it follows that $\sum_{j=1}^{p} x_1^{r_j} = c \lambda_{p}(x_1)$ for some integer $c$. Setting $x_1 = 1$ in this equation, we obtain $c = 1$ and hence 
$\sum_{j=1}^{p} x_1^{r_j} = \lambda_{p}(x_1) = 1 + x_1 + \ldots + x_1^{p-1}$. In other words, $\{r_1, r_2, \ldots, r_{p}\} = \{0, 1, \ldots, p-1\}$, which means that $s_{1}^{(i)} \not\equiv s_{1}^{(j)}$ (mod $p$) for distinct elements $(s_1^{(i)}, \ldots, s_d^{(i)}), (s_1^{(j)}, \ldots, s_d^{(j)}) \in S_0$, as required to establish the necessity.
\qed
\end{proof}

The condition that only one $n_i$ is divisible by $p$ cannot be removed from Theorem \ref{thm:APCforP} for otherwise the result may not be true. We illustrate this by the following example.

\begin{example}
\label{counterAPCforP} 
Let $p \geq 5$ be a prime. Then 
\begin{eqnarray*}
S = & \ \{(1, 1), (p-1, p-1), (1, p-1), (p-1, 1)\} \ \cup \\ 
& \ \{(i,0): i \in \mathbb{Z}_p, i \neq 0, (p-3)/2, (p-1)/2, (p+1)/2, (p+3)/2\}
\end{eqnarray*}
is an inverse-closed generating subset of $\mathbb{Z}_p \times \mathbb{Z}_p$ with $|S|=p-1$. So $\Cay(\mathbb{Z}_p \times \mathbb{Z}_p, S)$ is connected with degree $p-1$. Let $C = \langle ((p-1)/2, 1) \rangle$. Then $|S_0| |C| = p^2$ and $C - C = C$ as $C$ is a subgroup of $\mathbb{Z}_p \times \mathbb{Z}_p$. We have
\begin{eqnarray*}
S_0 - S_0 = & \ \{(0, 2), (2, 2), (p-2, 2), (0, p-2), (2, p-2), (p-2, p-2)\}\ \cup \\ 
& \ \{(i,0): i \in \mathbb{Z}_p\} \ \cup \ \{(i,j): (i,j) \in G, i \neq (p-1)/2, (p+1)/2, j = 1, p-1\}. 
\end{eqnarray*}
Thus, for any $(x_1, x_2) \in S_0 - S_0$, we have $x_2 \in \{0, 1, 2, p-2, p-1\}$. The only elements of $C$ with the second coordinate in $\{0, 1, 2, p-2, p-1\}$ are $(0, 0), ((p-1)/2, 1), (p-1, 2), (1, p-2)$ and $((p+1)/2, p-1)$. Therefore, $(S_0 - S_0) \cap (C - C) = (S_0 - S_0) \cap C = \{(0,0)\}$. Thus, by Lemma \ref{lem:DirPprop}, $\mathbb{Z}_p \times \mathbb{Z}_p = S_0 \oplus C$, and by Lemma \ref{lem:DirP}, $C$ is a perfect code in $\Cay(\mathbb{Z}_p \times \mathbb{Z}_p,S)$. However, $S_0$ contains $(s_1^{(1)}, s_2^{(1)}) = (1,1), (s_1^{(2)}, s_2^{(2)}) = (1, p-1)$ and $(s_1^{(3)}, s_2^{(3)}) = (p-1, 1)$, but $s_1^{(1)} \equiv s_1^{(2)}$ (mod $p$) and $s_2^{(1)} \equiv s_2^{(3)}$ (mod $p$). 

Figure \ref{fig:5.1} shows the special case when $p = 5$, for which we have $S = \{(1,1), (1,4), (4,1), (4,4)\}$ and $\Cay(\mathbb{Z}_5  \times  \mathbb{Z}_5, S)$ admits $C = \langle (2,1) \rangle = \{(0,0), (2,1), (4,2), (1,3), (3,4)\}$ as a perfect code. 
\qed
\end{example}

\begin{figure}[ht]
\centering
\vspace{-2.8cm}
\includegraphics*[height=15.0cm]{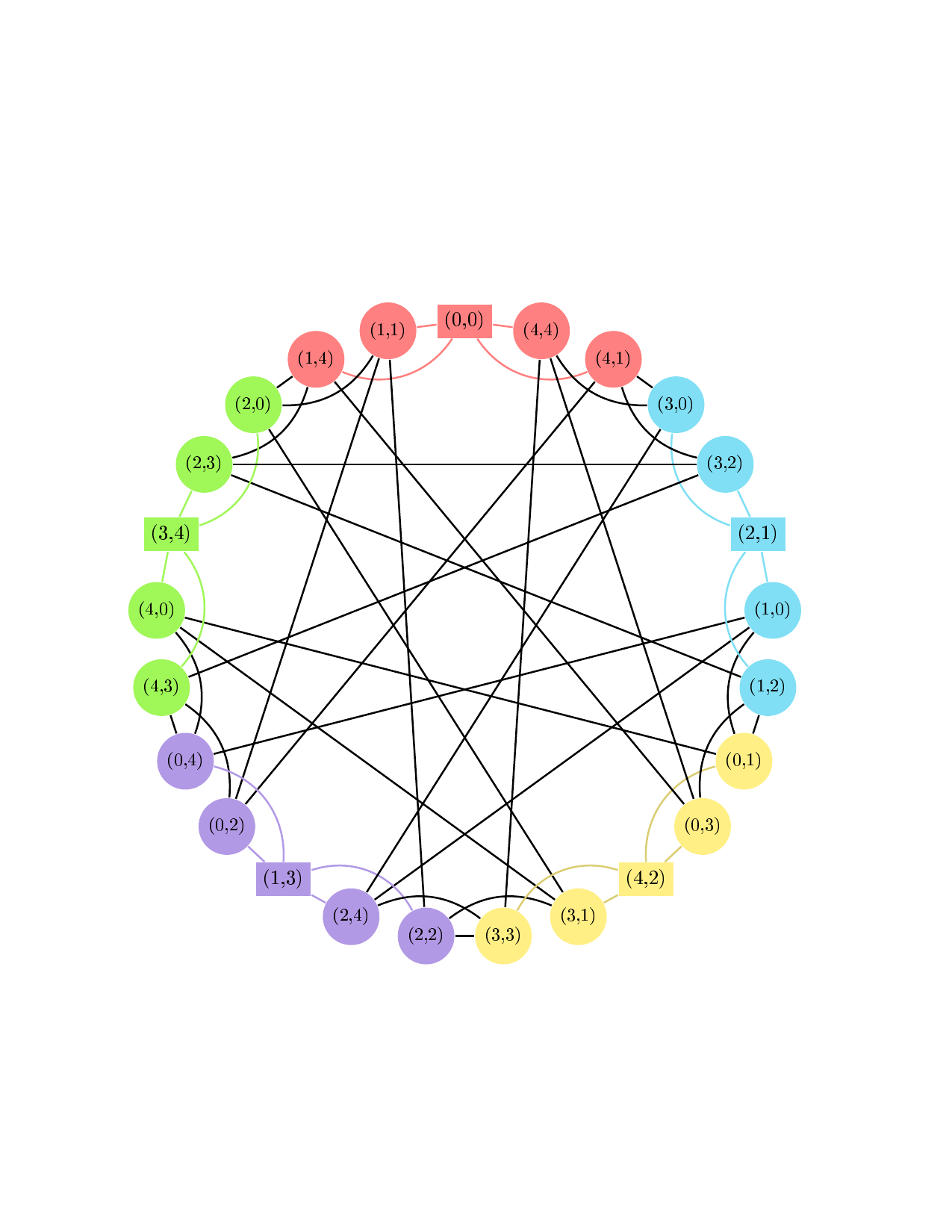}
\vspace{-2.5cm}
\caption{$\Cay(\mathbb{Z}_5 \times \mathbb{Z}_5, S)$ admits $C = \langle (2,1) \rangle$ as a perfect code, where $S = \{(1,1), (1,4), (4,1), (4,4)\}$.}
\label{fig:5.1}
\end{figure}  

The following theorem gives \cite[Theorem 1.2]{FHZ17} in the special case when $d = 1$.

\begin{theorem}
\label{thm:APCforPl}
Let $G$ be as in \eqref{eq:G} and let $S$ be an inverse-closed subset of $G \setminus \{(0, \ldots, 0)\}$ with $\la S \ra = G$. Suppose that $|S_0| = p^l$ is a prime power such that $p^l \mid n$ but $p^{l+1}\nmid n$. Suppose further that exactly one of $n_1, n_2, \ldots, n_d$, say, $n_t$, is divisible by $p$. Then $\Cay(G,S)$ admits a perfect code if and only if $s_t \not\equiv s'_t$ (mod $p^l$) for each pair of distinct elements $(s_1, \ldots, s_d), (s_1', \ldots, s_d')$ of $S_0$.
\end{theorem}

\begin{proof}	
Without loss of generality we may assume $t=1$. If $s_1 \not\equiv s'_1$ (mod $p^l$) for each pair of distinct elements $(s_1, \ldots, s_d), (s_1', \ldots, s_d')$ of $S_0$, then by setting $m_1 = p^l$ and $m_i = 1$ for each $i \neq 1$ in Lemma \ref{lem:abelPCsuff} we obtain that $\Cay(G,S)$ admits a perfect code. This proves the sufficiency. 
	
Now suppose that $\Cay(G,S)$ admits a perfect code $C$. Since $t = 1$ and $l$ is the exponent of $p$ in $n$, we have $n = ap^l$ and $n_1 = a_1p^l$, where $a_1$ is not divisible by $p$. Then $a = a_1 n_2 \cdots n_d$. By Lemma \ref{lem:abelPCpolyequiv}, for each pair $(I, g)$, where $\emptyset \ne I \subseteq \{1, 2, \ldots, d\}$ and $g = (g_1, \ldots, g_d) \in G$, there exists a polynomial $q_{I}^{(g)}(x_1, \ldots, x_d) \in \mathbb{Z}[x_1, \ldots, x_d]$ divisible by $(\prod_{i=1}^d x_i^{g_i})(\prod_{i \in I}(x_i^{n_i} - 1))$ such that 
\begin{equation}
\label{eq:csol}
f_C(x_1, \ldots, x_d)f_{S_0}(x_1, \ldots, x_d) = \sum_{\emptyset \ne I \subseteq \{1, 2, \ldots, d\}} \sum_{g \in G} q_{I}^{(g)}(x_1, \ldots, x_d) + \prod_{i=1}^d \left(\sum_{j=0}^{n_i - 1} x_i^j\right).
\end{equation}
Setting $x_i = 1$ for all $i$ in this equation, we obtain $|S_0| |C| = \prod_{i = 1}^d n_i = n$. Since $|S_0| = p^l$ and $p^{l+1}\not \mid n$, we have $|C| = n/p^l = a$ and $p$ is not a divisor of $|C|$. 

Similarly to the argument in the paragraph below Claim 1 in the proof of Theorem \ref{thm:APCforP}, by using \eqref{eq:cyclo} and \eqref{eq:csol} one can prove that $\lambda_{p^j}(x_1)$ divides either $f_C(x_1, 1, \ldots, 1)$ or $f_{S_0}(x_1, 1, \ldots, 1)$ for each $j \in \{1, 2, \ldots, l\}$. Suppose that there exists some $j \in \{1, 2, \ldots, l\}$ such that $\lambda_{p^j}(x_1)$ divides $f_C(x_1, 1, \ldots, 1)$. Then $f_C(x_1, 1, \ldots, 1) = \lambda_{p^j}(x_1)g(x_1)$ for some $g(x_1) \in \ZZZ[x_1]$. Setting $x_1 = 1$, we obtain $|C| = p g(1)$, which contradicts the fact that $p$ does not divide $|C|$. Thus, we have proved:  
	
\medskip	
\textsf{Claim 3.} For all $j \in \{1, 2, \ldots, l\}$, $\lambda_{p^j}(x_1)$ divides $f_{S_0}(x_1, 1, \ldots, 1)$.

Write 
$$
S_0 = \left\{(s_1^{(1)}, \ldots, s_d^{(1)}), \ldots, (s_1^{(p^l)}, \ldots, s_d^{(p^l)})\right\}. 
$$ 
Set $r_i \equiv s_1^{(i)}$ (mod $p^l$) and $s_1^{(i)} = r_i + k_i p^l$ for each $i$, where $0 \leq r_i < p^l$ and $k_i \in \mathbb{Z}$. Then
\begin{eqnarray*}
f_{S_0}(x_1, 1, \ldots, 1) & = & \sum_{i=1}^{p^l} x_1^{s_1^{(i)}} \\
& = & \sum_{i=1}^{p^l} x_1^{r_i + k_i p^l} \\
& = & \sum_{i=1}^{p^l} \left\{(x_1^{p^l} - 1)\left(\sum_{j=0}^{k_i - 1} x_1^{r_i + jp^l}\right) + x_1^{r_i}\right\} \\
& = & \sum_{i=1}^{p^l} x_1^{r_i} + (x_1^{p^l} - 1) \sum_{i=1}^{p^l} \sum_{j=0}^{k_i - 1} x_1^{r_i + jp^l}. 
\end{eqnarray*}
By \eqref{eq:cyclo} and Claim 3, $\prod_{j = 1}^{l}\lambda_{p^j}(x_1)$ divides both $(x_1^{p^l} - 1)$ and $f_{S_0}(x_1, 1, \ldots, 1)$. So the equation above implies that $\prod_{j = 1}^l \lambda_{p^j}(x_1)$ divides $\sum_{j=1}^{p^l} x_1^{r_j}$. Since the former has degree $p^l - 1$ and the latter has degree at most $p^l - 1$, we have $\sum_{j=1}^{p^l} x_1^{r_j} = c \prod_{j = 1}^l \lambda_{p^j}(x_1) = c\{(x_1^{p^l} - 1)/(x_1 - 1)\} = c (1 + x_1 + \ldots + x_1^{p^l - 1})$ for some integer $c$. Setting $x_1 = 1$, we obtain $c = 1$ and thus $\sum_{j=1}^{p^l} x_1^{r_j} = 1 + x_1 + \ldots + x_1^{p^l - 1}$. Therefore, $\{r_1, r_2, \ldots, r_{p^l}\} = \{0, 1, \ldots, p^l - 1\}$, or equivalently $s_{1}^{(i)} \not\equiv s_{1}^{(j)}$ (mod $p^l$) for distinct $(s_1^{(i)}, \ldots, s_d^{(i)}), (s_1^{(j)}, \ldots, s_d^{(j)}) \in S_0$, as required to prove the necessity.
\qed
\end{proof}

Unlike Theorem \ref{thm:APCforP}, at present we are not aware of any example which shows that the condition that only one $n_i$ is divisible by $p$ cannot be removed from Theorem \ref{thm:APCforPl}.

\subsection{Total perfect codes}

The following two lemmas are generalizations of Lemmas 2.4 and 2.5 in \cite{FHZ17}, respectively. We omit their proofs as they are similar to the proofs of Lemmas \ref{lem:abelPCpolyequiv} and \ref{lem:abelPCsuff}, respectively.

\begin{lemma}
\label{lem:abelTPCpolyequiv} 
Let $G$ be as in \eqref{eq:G} and let $S$ be an inverse-closed subset of $G \setminus \{(0, \ldots, 0)\}$. A subset $C$ of $G$ is a total perfect code in $\Cay(G, S)$ if and only if for each pair $(I, g)$ with $\emptyset \ne I \subseteq \{1, 2, \ldots, d\}$ and $g = (g_1, \ldots, g_d) \in G$ there exists a polynomial $q_{I}^{(g)}(x_1, \ldots, x_d) \in \mathbb{Z}[x_1, \ldots, x_d]$ divisible by $(\prod_{i=1}^d x_i^{g_i})(\prod_{i \in I}(x_i^{n_i} - 1))$ such that 
$$
f_C(x_1, \ldots, x_d)f_{S}(x_1, \ldots, x_d) = \sum_{\emptyset \ne I \subseteq \{1, 2, \ldots, d\}} \sum_{g \in G} q_{I}^{(g)}(x_1, \ldots, x_d) + \prod_{i=1}^d \left(\sum_{j=0}^{n_i - 1} x_i^j\right).
$$
\end{lemma}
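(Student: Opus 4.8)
The plan is to mimic the proof of Lemma~\ref{lem:abelPCpolyequiv} almost verbatim, replacing $S_0$ by $S$ throughout and invoking part~(b) of Lemma~\ref{lem:DirP} in place of part~(a). First I would prove the forward direction: assume $C$ is a total perfect code in $\Cay(G,S)$, so by Lemma~\ref{lem:DirP}(b) we have $G = S \oplus C$. Then for each pair $(s_1,\ldots,s_d) \in S$ and $(c_1,\ldots,c_d) \in C$, the sum $(s_1,\ldots,s_d)+(c_1,\ldots,c_d)$ equals a unique $g = (g_1,\ldots,g_d) \in G$ together with integer ``carries'' $(r_1^{(g)},\ldots,r_d^{(g)})$, i.e. $(s_1+c_1,\ldots,s_d+c_d) = (g_1+r_1^{(g)}n_1,\ldots,g_d+r_d^{(g)}n_d)$ as integer tuples. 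Setting $q_I^{(g)}(x_1,\ldots,x_d) = \left(\prod_{i=1}^d x_i^{g_i}\right)\left(\prod_{i \in I}(x_i^{r_i^{(g)}n_i}-1)\right)$, this polynomial is clearly divisible by $\left(\prod_{i=1}^d x_i^{g_i}\right)\left(\prod_{i \in I}(x_i^{n_i}-1)\right)$, and the same telescoping computation as in Lemma~\ref{lem:abelPCpolyequiv} --- expanding $\prod_{i=1}^d\bigl((x_i^{r_i^{(g)}n_i}-1)+1\bigr) = \sum_{I \subseteq \{1,\ldots,d\}}\prod_{i \in I}(x_i^{r_i^{(g)}n_i}-1)$ and peeling off the $I = \emptyset$ term --- yields exactly the claimed identity with $f_{S_0}$ replaced by $f_S$.

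Next I would prove the converse. Suppose the polynomial identity holds with suitable $q_I^{(g)}$ divisible by $\left(\prod_{i=1}^d x_i^{g_i}\right)\left(\prod_{i \in I}(x_i^{n_i}-1)\right)$. Substituting $x_i = 1$ for all $i$ gives $|S|\,|C| = \prod_{i=1}^d n_i = |G|$. To see $G = S + C$, argue by contradiction as in Lemma~\ref{lem:abelPCpolyequiv}: if some $g = (g_1,\ldots,g_d) \in G$ is not of the form $s + c$ (modulo the $n_i$) for any $s \in S$, $c \in C$, then $f_C f_S$ contains no monomial $\prod_i x_i^{g_i + r_i n_i}$; but $\prod_{i=1}^d\left(\sum_{j=0}^{n_i-1}x_i^j\right)$ contributes the monomial $\prod_i x_i^{g_i}$, so $\sum_{\emptyset \ne I}\sum_{g'}q_{I}^{(g')}$ would have to contain $-\prod_i x_i^{g_i}$ with no other monomial in the same residue class $\prod_i x_i^{g_i + r_i n_i}$ cancelling it --- contradicting that every $q_I^{(g)}$ with $I \ne \emptyset$ is divisible by $\prod_{i \in I}(x_i^{n_i}-1)$ (which forces monomials to come in $\pm$ pairs within a residue class whenever the $g$-component matches, and the only $q_I^{(g')}$ terms that can contribute to the class of $\prod_i x_i^{g_i}$ are those with $g' \equiv g$). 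Since $G = S + C$ and $|S|\,|C| = |G|$, Lemma~\ref{lem:DirPprop} gives $G = S \oplus C$, so by Lemma~\ref{lem:DirP}(b) $C$ is a total perfect code in $\Cay(G,S)$.

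Since the statement itself says the proof is omitted because it parallels that of Lemma~\ref{lem:abelPCpolyequiv}, I would in fact only need the one-line remark: \emph{the proof is identical to that of Lemma~\ref{lem:abelPCpolyequiv}, using part~(b) of Lemma~\ref{lem:DirP} in place of part~(a) and $f_S$ in place of $f_{S_0}$.} The only point of genuine care --- and the place I expect the main (minor) obstacle --- is the converse direction's bookkeeping: one must be sure that when $g$ is a ``missing'' sum, the monomial $\prod_i x_i^{g_i}$ on the right-hand side is not produced spuriously by the $q_I^{(g')}$ terms, i.e. that divisibility by $\prod_{i\in I}(x_i^{n_i}-1)$ genuinely prevents a single uncancelled lowest-degree monomial in each residue class. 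This is handled exactly as in Lemma~\ref{lem:abelPCpolyequiv}, so no new idea is required; the whole proof is routine transcription.
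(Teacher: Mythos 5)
Your proposal is correct and is precisely the argument the paper intends: it omits the proof of this lemma explicitly because it is obtained from the proof of Lemma~\ref{lem:abelPCpolyequiv} by replacing $S_0$ with $S$ and invoking part~(b) of Lemma~\ref{lem:DirP} instead of part~(a). Your transcription of both directions, including the bookkeeping in the converse, matches the paper's approach.
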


\begin{lemma}
\label{lem:abelTPCsuff}
Let $G$ be as in \eqref{eq:G} and let $S$ be an inverse-closed subset of $G \setminus \{(0, \ldots, 0)\}$ with $\la S \ra = G$. Suppose that $|S|$ can be factorized as $|S| = m_1 m_2 \cdots m_d$, where $m_i \ge 1$ is a divisor of $n_i$ for each $i \in \{1, 2, \ldots, d\}$, such that for each pair of distinct elements $(s_1, \ldots, s_d), (s_1', \ldots, s_d')$ of $S$ there exists at least one $j \in \{1, 2, \ldots, d\}$ with $s_j \not \equiv s'_j$ (mod $m_j$). Then $\Cay(G,S)$ admits a total perfect code.
\end{lemma}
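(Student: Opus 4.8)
The plan is to mimic the proof of Lemma~\ref{lem:abelPCsuff}, replacing $S_0$ by $S$ throughout and using part~(b) of Lemma~\ref{lem:DirP} in place of part~(a). Concretely, write $n_i = m_i r_i$ for each $i\in\{1,\dots,d\}$ (possible since $m_i\mid n_i$), and set
$$
C = (m_1\ZZZ_{n_1})\times\cdots\times(m_d\ZZZ_{n_d}).
$$
Then $|C| = r_1 r_2\cdots r_d$, so $|S|\,|C| = m_1\cdots m_d\, r_1\cdots r_d = n_1\cdots n_d = n = |G|$, which supplies condition~(c) of Lemma~\ref{lem:DirPprop}.

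Next I would verify that $C-C = C$: since $(0,\dots,0)\in C$ we get $C\subseteq C-C$, and conversely, for $(c_1,\dots,c_d),(c_1',\dots,c_d')\in C$ each coordinate difference $(c_i-c_i')\bmod n_i$ is still a multiple of $m_i$ (because $m_i$ divides $c_i$, $c_i'$ and $n_i$), so $C-C\subseteq C$. Hence $C-C = C$. Then I would show $(S-S)\cap(C-C) = \{(0,\dots,0)\}$: any element of $S-S$ has the form $((s_1-s_1')\bmod n_1,\dots,(s_d-s_d')\bmod n_d)$ with $(s_1,\dots,s_d),(s_1',\dots,s_d')\in S$; if these two tuples are distinct, the hypothesis gives some $j$ with $s_j\not\equiv s_j'\pmod{m_j}$, so the $j$-th coordinate of the difference is not a multiple of $m_j$, and therefore the difference does not lie in $C = C-C$. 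Thus condition~(b) of Lemma~\ref{lem:DirPprop} holds as well. By Lemma~\ref{lem:DirPprop}, $G = S\oplus C$, and by part~(b) of Lemma~\ref{lem:DirP}, $C$ is a total perfect code in $\Cay(G,S)$.

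Since the argument is essentially identical to that of Lemma~\ref{lem:abelPCsuff} except for using $S$ rather than $S_0$ and invoking Lemma~\ref{lem:DirP}(b) instead of Lemma~\ref{lem:DirP}(a), there is no real obstacle; the only point worth a brief remark is that the hypothesis $\la S\ra = G$ is not actually needed for this direction (it merely guarantees $\Cay(G,S)$ is connected), just as in Lemma~\ref{lem:abelPCsuff}. Given this, the authors' stated intention to ``omit the proof as it is similar'' is entirely justified, and I would likewise present it as a one-line remark indicating that one sets $C = (m_1\ZZZ_{n_1})\times\cdots\times(m_d\ZZZ_{n_d})$ and repeats verbatim the computation in the proof of Lemma~\ref{lem:abelPCsuff}.
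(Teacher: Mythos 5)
Your proposal is correct and is exactly the argument the paper intends: the authors explicitly omit the proof as being the same as that of Lemma~\ref{lem:abelPCsuff} with $S_0$ replaced by $S$ and Lemma~\ref{lem:DirP}(b) used in place of part (a), which is precisely what you carry out. The verification of conditions (b) and (c) of Lemma~\ref{lem:DirPprop} for $C = (m_1\ZZZ_{n_1})\times\cdots\times(m_d\ZZZ_{n_d})$ goes through verbatim, so there is nothing to add.
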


The sufficient condition in Lemma \ref{lem:abelTPCsuff} is not necessary even in the case when $d = 1$. For example, $\Cay(\mathbb{Z}_{12}, S)$ with $S = \{1, 3, 5, 7, 9, 11\}$ admits $\{0, 1\}$ as a perfect code but $1 \equiv 7 $ (mod $|S|$). 


The following theorem yields \cite[Theorem 1.3]{FHZ17} in the special case when $d = 1$. 

\begin{theorem}
\label{thm:ATPCforP}
Let $G$ be as in \eqref{eq:G} and let $S$ be an inverse-closed subset of $G \setminus \{(0, \ldots, 0)\}$ with $\la S \ra = G$. Suppose that $|S| = p$ is an odd prime and exactly one of $n_1, n_2, \ldots, n_d$, say, $n_{t}$, is divisible by $p$. Then $\Cay(G, S)$ admits a total perfect code if and only if $s_{t} \not\equiv s'_{t}$ (mod $p$) for each pair of distinct elements $(s_1, \ldots, s_d), (s_1', \ldots, s_d')$ of $S$.
\end{theorem}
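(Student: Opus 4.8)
The plan is to mirror the proof of Theorem~\ref{thm:APCforP}, replacing $f_{S_0}$ by $f_S$ throughout and using Lemma~\ref{lem:abelTPCpolyequiv} in place of Lemma~\ref{lem:abelPCpolyequiv}. Without loss of generality assume $t=1$, and write $n = ap^l$, $n_1 = a_1 p^l$ with $p \nmid a_1$, $l \ge 1$, so $a = a_1 n_2\cdots n_d$. For the sufficiency direction, if $s_1 \not\equiv s_1'$ (mod $p$) for all distinct pairs in $S$, I would set $m_1 = p$ and $m_i = 1$ for $i \neq 1$ in Lemma~\ref{lem:abelTPCsuff} to conclude that $\Cay(G,S)$ admits a total perfect code. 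This is immediate.

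For necessity, suppose $C$ is a total perfect code. By Lemma~\ref{lem:abelTPCpolyequiv} we get, for each pair $(I,g)$, a polynomial $q_I^{(g)}$ divisible by $(\prod_i x_i^{g_i})(\prod_{i\in I}(x_i^{n_i}-1))$ with $f_C f_S = \sum_{\emptyset\ne I}\sum_g q_I^{(g)} + \prod_i(\sum_{j=0}^{n_i-1} x_i^j)$. Setting all $x_i = 1$ gives $p\,|C| = n$, so $|C| = n/p = ap^{l-1}$. Specializing $x_i = 1$ for $i \ge 2$, the cross terms with $1 \notin I$ vanish and I get $f_C(x_1,1,\dots,1)f_S(x_1,1,\dots,1) = \sum_g q_{\{1\}}^{(g)}(x_1,1,\dots,1) + (n_2\cdots n_d)(1+x_1+\cdots+x_1^{n_1-1})$, where each term on the right is divisible by $x_1^{n_1}-1$ or equals a multiple of that same geometric sum; in either case $\lambda_{p^j}(x_1)$ (which divides both $x_1^{n_1}-1$ and the geometric sum, for $1 \le j \le l$) divides the left side, hence divides $f_C(x_1,1,\dots,1)$ or $f_S(x_1,1,\dots,1)$ by irreducibility. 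Evaluating a putative factorization $f_C = \lambda_{p^j} g$ at $x_1 = 1$ would force $p \mid |C| = ap^{l-1}$ when $j \ge 2$ gives $\lambda_{p^j}(1) = p$; but here, unlike the perfect-code case where $|C| = ap^{l-1}$ is still divisible by $p$ when $l \ge 2$, the analogous obstruction has to be handled by the Claim~1 / Claim~2 dichotomy: at least one $\lambda_{p^j}$ ($1\le j\le l$) divides $f_S(x_1,1,\dots,1)$ (else $p^l \mid |C| = ap^{l-1}$, impossible), and no $\lambda_{p^j}$ with $j \ge 2$ divides $f_S(x_1,1,\dots,1)$ --- this second part is the crux.

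To prove that no $\lambda_{p^j}$ with $j\ge 2$ divides $f_S(x_1,1,\dots,1)$, I would repeat the argument of Claim~2: write $S = \{(s_1^{(i)},\dots,s_d^{(i)}) : 1 \le i \le p\}$, put $r_i \equiv s_1^{(i)}$ (mod $p$) with $0 \le r_i < p$ and $s_1^{(i)} = r_i + k_i p$, so that $f_S(x_1,1,\dots,1) = \sum_{i=1}^p x_1^{r_i} + (x_1^p - 1)\sum_{i=1}^p\sum_{j=0}^{k_i-1} x_1^{r_i+jp}$. Since $\lambda_{p^j}(x_1) = \sum_{i=0}^{p-1}(x_1^{p^{j-1}})^i \equiv p$ (mod $x_1^p - 1$), a factorization $f_S(x_1,1,\dots,1) = \lambda_{p^j}(x_1)h(x_1)$ reduces mod $x_1^p - 1$ to $\sum_i x_1^{r_i} \equiv p\,\hat h(x_1)$, forcing $p \mid \sum_i x_1^{r_i}$, hence all $r_i$ equal. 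The one difference from the perfect-code proof is that here $0$ need not lie in $S$, so I cannot immediately conclude $r_i = 0$; instead, all $s_1^{(i)}$ are congruent mod $p$ to a common value $r$, so every element of $S$ has first coordinate $\equiv r$ (mod $p$), and then $\langle S\rangle$ is contained in the proper subgroup $\{(x_1,\dots,x_d) : x_1 \equiv mr \text{ (mod } p) \text{ for some } m\}$... more carefully, the first coordinates of $S-S$ are all $\equiv 0$ (mod $p$), so $\langle S\rangle$ cannot surject onto the $\mathbb{Z}_{n_1}$-component modulo $p$, contradicting $\langle S\rangle = G$. Combining the two claims, $\lambda_{p^j}(x_1)$ divides $f_S(x_1,1,\dots,1)$ iff $j = 1$; then as in the last paragraph of the proof of Theorem~\ref{thm:APCforP}, $\sum_i x_1^{r_i} \equiv 0$ (mod $\lambda_p(x_1)$) with degree constraints forces $\sum_i x_1^{r_i} = \lambda_p(x_1) = 1 + x_1 + \cdots + x_1^{p-1}$, i.e. $\{r_1,\dots,r_p\} = \{0,1,\dots,p-1\}$, which is exactly $s_1^{(i)} \not\equiv s_1^{(j)}$ (mod $p$) for distinct pairs. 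The main obstacle is the adaptation just described: without $0 \in S$ one must extract the contradiction with connectivity from the weaker conclusion ``all first coordinates congruent mod $p$'' rather than ``all first coordinates divisible by $p$''.
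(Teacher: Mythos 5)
Your overall strategy is exactly the paper's: sufficiency from Lemma \ref{lem:abelTPCsuff}, necessity by rerunning the proof of Theorem \ref{thm:APCforP} with $S_0$ replaced by $S$, and you correctly locate the one place where the adaptation is nontrivial, namely that $0 \notin S$ so Claim 2's conclusion ``all $r_i$ are equal'' no longer immediately yields $r_i = 0$. However, the argument you give to bridge that gap is wrong. Suppose all first coordinates of elements of $S$ are $\equiv r$ (mod $p$) with $r \neq 0$. Consider the projection $\pi : G \to \mathbb{Z}_p$ sending $(x_1, \ldots, x_d)$ to $x_1$ mod $p$ (well defined since $p \mid n_1$). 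Then $\pi(\la S \ra)$ is the subgroup of $\mathbb{Z}_p$ generated by $r$, which is all of $\mathbb{Z}_p$ when $r \neq 0$ because $p$ is prime. So the fact that $S - S$ has all first coordinates $\equiv 0$ (mod $p$) does \emph{not} prevent $\la S \ra$ from surjecting onto the $\mathbb{Z}_{n_1}$-component modulo $p$: $\la S \ra$ is generated by $S$, not by $S - S$, and $\la S - S \ra$ can be a proper subgroup of $\la S \ra$. Likewise the set $\{(x_1, \ldots, x_d) : x_1 \equiv mr \ (\text{mod } p) \text{ for some } m\}$ is all of $G$ when $r \neq 0$, not a proper subgroup. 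So no contradiction with connectivity arises from your reasoning, and the proof as written does not close.

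The gap is real but easily repaired, and the paper's fix is the ingredient you are missing: since $|S| = p$ is odd and $S = -S$, the inversion map is an involution on the odd-sized set $S$ and hence has a fixed point, i.e., $S$ contains an element $(s_1^{(1)}, \ldots, s_d^{(1)})$ with $2 s_1^{(1)} \equiv 0$ (mod $n_1$), so $s_1^{(1)} = 0$ or $n_1/2$; since $p$ is odd and $p \mid n_1$, in either case $s_1^{(1)} \equiv 0$ (mod $p$), forcing $r_1 = 0$ and hence $r_i = 0$ for all $i$. (Even more directly: for any $s \in S$ we also have $-s \in S$, so $r \equiv -r$ (mod $p$) and therefore $r = 0$ as $p$ is odd.) Once $r_i = 0$ for all $i$, every element of $S$ has first coordinate a multiple of $p$, so $\la S \ra$ lies in the proper subgroup $p\mathbb{Z}_{n_1} \times \mathbb{Z}_{n_2} \times \cdots \times \mathbb{Z}_{n_d}$, contradicting connectivity exactly as in the paper. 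Note that this is also the only place in the whole proof where the hypothesis that $p$ is \emph{odd} is used, which is a sign that any correct adaptation must invoke it; your proposed argument never does.
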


\begin{proof}
The sufficiency follows from Lemma \ref{lem:abelTPCsuff}, and the necessity can be proved in the same way as the proof of Theorem \ref{thm:APCforP}, with $S_0$ replaced by $S$ and with the following sentences added to the end of the proof of Claim 2: Assume without loss of generality that $t=1$. Since $|S| = p$ is odd and $S = -S$, $S$ contains an involution, say, $(s_1^{(1)}, \ldots, s_d^{(1)})$, and hence $s_1^{(1)} = 0$ or $s_1^{(1)} = n_1/2$. In either case, we have $s_1^{(1)} \equiv 0$ (mod $p$), which implies $r_1 = 0$ and hence $r_i = 0$ for $1 \le i \le p$.  
\qed
\end{proof}

The following example shows that the condition that only one $n_i$ is divisible by $p$ cannot be removed from Theorem \ref{thm:ATPCforP} for otherwise the result may not be true.
	  
\begin{example}
\label{counterATPCforP}
Let $p \geq 5$ be a prime and let 
\begin{eqnarray*}
S = & \ \{(1, p-1), (p-1, p+1), (1, p+1), (p-1, p-1)\}\ \cup \\
& \ \{(i,p): i\in \mathbb{Z}_p, i \neq (p-3)/2, (p-1)/2, (p+1)/2, (p+3)/2\}.
\end{eqnarray*}
Then $S$ is an inverse-closed generating subset of $\mathbb{Z}_{p} \times \mathbb{Z}_{2p}$ with $|S| = p$. Hence $\Cay(\mathbb{Z}_{p} \times \mathbb{Z}_{2p}, S)$ is a connected Cayley graph with degree $p$. Let $C = \langle ((p-1)/2, 1) \rangle$. Then $C$ is a subgroup of $\mathbb{Z}_{p} \times \mathbb{Z}_{2p}$ with order $|C| = 2p$. So $C - C = C$ and $|S| |C| = 2p^2$. We have 
\begin{eqnarray*}
S - S = & \ \{(0, 2), (2, 2), (p-2, 2), (0, 2p-2), (2, 2p-2), (p-2, 2p-2)\} \cup \{(i,0): i \in \mathbb{Z}_p\}\ \cup \\
& \ \{(i,j): (i,j) \in G, i \neq (p-1)/2, (p+1)/2 \text{ and } j = 1, 2p-1\}. \\
\end{eqnarray*}
So each element of $S - S$ has the second coordinate $2p-2, 2p-1, 0, 1$ or $2$, but the elements of $C$ with the second coordinate in $\{2p-2, 2p-1, 0, 1, 2\}$ are precisely $(1, 2p-2), ((p+1)/2, 2p-1), (0, 0), ((p-1)/2, 1)$ and $(p-1, 2)$. Hence $(S - S) \cap (C - C) = (S - S) \cap C = \{(0,0)\}$. Thus, by Lemma \ref{lem:DirPprop}, $\mathbb{Z}_{p} \times \mathbb{Z}_{2p} = S \oplus C$, and by Lemma \ref{lem:DirP}, $C$ is a total perfect code of $\Cay(\mathbb{Z}_{p} \times \mathbb{Z}_{2p},S)$. However, $S$ contains $(s_1^{(1)}, \ldots, s_2^{(1)}) = (1,p+1), (s_1^{(2)}, \ldots, s_2^{(2)}) = (1, p-1)$ and $(s_1^{(3)}, \ldots, s_2^{(3)}) = (p-1, p+1)$, but $s_1^{(1)} \equiv s_1^{(2)}$ (mod $p$) and $s_2^{(1)} \equiv s_2^{(3)}$ (mod $p$).  

The special case $p = 5$ is illustrated in Figure \ref{fig:52}. In this case, we see that $C = \langle (2,1) \rangle = \{(0,0), (2,1), (4,2), (1,3), (3,4), (0,5), (2,6), (4,7), (1,8), (3,9)\}$ is a total perfect code of $\Cay(\mathbb{Z}_5 \times \mathbb{Z}_{10}, S)$, where $S = \{(0,5), (1,4), (4,6), (1,6), (4,4)\}$. 
\qed
\end{example}

\begin{figure}[ht]
\centering
\vspace{-2.8cm}
\includegraphics*[height=15.0cm]{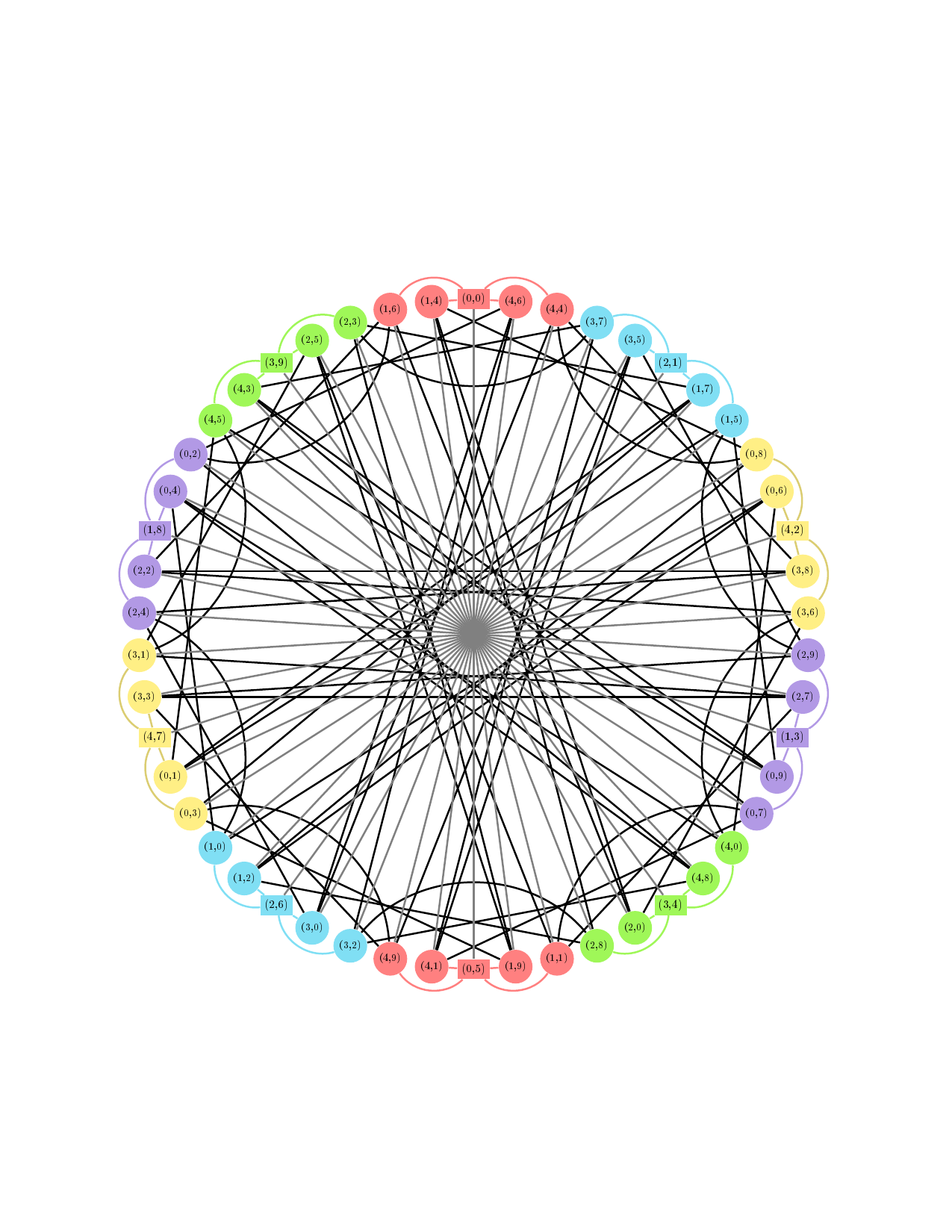}
\vspace{-2.5cm}
\caption{$\Cay(\mathbb{Z}_5 \times \mathbb{Z}_{10}, S)$ admits $C = \langle (2,1) \rangle$ as a total perfect code, where $S = \{(0,5), (1,4), (1,6), (4,4), (4,6) \}$.}
\label{fig:52}
\end{figure}  

The following theorem gives rise to \cite[Theorem 1.4]{FHZ17} in the special case when $d = 1$. 

\begin{theorem}
\label{thm:ATPCforPl}
Let $G$ be as in \eqref{eq:G} and let $S$ be an inverse-closed subset of $G \setminus \{(0, \ldots, 0)\}$ with $\la S \ra = G$. Suppose that $|S| = p^l$ is a prime power such that $p^l \mid n$ but $p^{l+1}\nmid n$. Suppose further that exactly one of $n_1, n_2, \ldots, n_d$, say, $n_t$, is divisible by $p$. Then $\Cay(G,S)$ admits a total perfect code if and only if $s_t \not\equiv s'_t$ (mod $p^l$) for each pair of distinct elements $(s_1, \ldots, s_d), (s_1', \ldots, s_d')$ of $S$.
\end{theorem}

\begin{proof}
The sufficiency follows from Lemma \ref{lem:abelTPCsuff}, and the necessity can be proved in the same way as the proof of Theorem \ref{thm:APCforPl} with $S_0$ replaced by $S$. 
\qed
\end{proof}

It is unknown whether the condition that only one $n_i$ is divisible by $p$ can be removed from Theorem \ref{thm:ATPCforPl}.

\subsection{A reduction}
\label{subsec:red}

We now prove a lemma which reduces the problem of determining whether a given connected Cayley graph $\Cay(G, S)$ of an abelian group $G$ admits a perfect code to the case when $S_0$ is aperiodic. (As proved in Lemma \ref{lem:rmvperi}, for $H = G_{S_0}$, $(S/H) \cup \{H\} \neq \emptyset$ is aperiodic in $G/H$.) This is a genuine reduction if and only if $S_0$ is periodic.

\begin{lemma}
\label{lem:redn}
Let $G$ be an abelian group. Let $S$ be a nonempty inverse-closed proper subset of $G \setminus \{0\}$ with $\la S \ra = G$, and let $H = G_{S_0}$. Then $\Cay(G, S)$ admits a perfect code if and only if $\Cay(G/H, (S/H) \setminus \{H\})$ admits a perfect code. Moreover, for any perfect code $X$ in $\Cay(G, S)$, $X/H$ is a perfect code in $\Cay(G/H, (S/H) \setminus \{H\})$; conversely, any perfect code $X/H$ in $\Cay(G/H, (S/H) \setminus \{H\})$ gives rise to $|H|^{k}$ perfect codes in $\Cay(G, S)$ each of which is formed by taking exactly one element from each coset of $H$ contained in $X/H$, where $k = |X/H|$.
\end{lemma}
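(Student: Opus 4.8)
The plan is to prove the biconditional by translating everything through the quotient map $\pi: G \to G/H$, using Lemma~\ref{lem:perodiv} as the engine. Write $\bar{S} = (S/H) \setminus \{H\}$, so that $\bar{S}_0 = S/H$ (note $H \in S/H$ since $H = G_{S_0} \subseteq S_0$, as $0 \in S_0$). First I would observe that $\bar{S}$ is inverse-closed in $G/H$ (because $S$ is inverse-closed and cosets of a subgroup are permuted by negation accordingly) and that $\la \bar{S} \ra = G/H$ (since $\la S \ra = G$), so $\Cay(G/H, \bar{S})$ is a legitimate connected Cayley graph. Then, for the forward direction: if $X$ is a perfect code in $\Cay(G,S)$, then $G = S_0 \oplus X$ by Lemma~\ref{lem:DirP}(a); applying Lemma~\ref{lem:perodiv} with the roles $X := S_0$, $C := X$ and $H = G_{S_0}$ gives $G/H = (S_0/H) \oplus (X/H)$, i.e.\ $G/H = \bar{S}_0 \oplus (X/H)$, and by Lemma~\ref{lem:DirP}(a) again this says exactly that $X/H$ is a perfect code in $\Cay(G/H, \bar{S})$.

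For the converse direction I would start from a perfect code $Y$ in $\Cay(G/H, \bar{S})$, so $G/H = \bar{S}_0 \oplus Y = (S_0/H) \oplus Y$ by Lemma~\ref{lem:DirP}(a). The task is to ``lift'' $Y$ to a perfect code in $\Cay(G,S)$. Here is where I expect the one genuine subtlety: I need to choose a set of coset representatives and check that the lift works regardless of the choice. Concretely, write $Y = \{H + y_1, \ldots, H + y_k\}$ with the $y_j$ pairwise in distinct cosets, and let $X = \{x_1, \ldots, x_k\}$ be \emph{any} transversal, i.e.\ $x_j \in H + y_j$ for each $j$; I claim every such $X$ is a perfect code in $\Cay(G,S)$. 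To verify this I would use Lemma~\ref{lem:perodiv} in the reverse direction: I need (i) $G/H = (S_0/H) \oplus (X/H)$ and (ii) $H \cap (X - X) = \{0\}$. Condition (i) holds because $X/H = Y$ by construction, and (ii) holds because the $x_j$ lie in $k$ \emph{distinct} cosets of $H$, so any nonzero difference $x_i - x_j$ ($i \ne j$) lies outside $H$, while $x_i - x_i = 0$; hence $H \cap (X-X) = \{0\}$. By Lemma~\ref{lem:perodiv} this gives $G = S_0 \oplus X$, and by Lemma~\ref{lem:DirP}(a), $X$ is a perfect code in $\Cay(G,S)$. This simultaneously establishes the ``if'' direction of the biconditional and the lifting statement.

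It remains to pin down the counting claim: a given perfect code $Y = X/H$ in the quotient, with $|Y| = k$, gives rise to exactly $|H|^k$ perfect codes $X$ in $\Cay(G,S)$ with $X/H = Y$. For this I would argue that the map $X \mapsto X/H$ from $\{$perfect codes of $\Cay(G,S)\}$ to $\{$perfect codes of $\Cay(G/H,\bar S)\}$ has the property that each $X$ in the preimage of $Y$ is a transversal of the $k$ cosets comprising $Y$: indeed, since $G = S_0 \oplus X$ forces $|X| = |G|/|S_0| = (|G/H|/|S_0/H|)\cdot|H|$ wait --- more cleanly, $|X| = |Y| \cdot |H| / |H| \cdot$ hmm; I would simply note $|X| = |G|/|S_0|$ and $|Y| = |G/H|/|\bar S_0| = (|G|/|H|)/(|S_0|/|H|) = |G|/|S_0| = |X|$, so $|X| = |Y| = k$, and since $X/H = Y$ consists of $k$ distinct cosets, $X$ must contain exactly one element from each. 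The number of such transversals is $\prod_{j=1}^{k} |H + y_j| = |H|^k$. Combined with the forward direction (every such transversal really is a perfect code, shown above), this is exactly a bijection between preimages and transversals, giving the count $|H|^k$.

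The main obstacle, such as it is, is purely bookkeeping: making sure the quotient objects are set up correctly ($\bar S$ versus $\bar S_0$, the fact that $H \subseteq S_0$ so that $H \in S/H$, and that $S$ proper with $\la S\ra = G$ ensures $\bar S$ is a nonempty proper inverse-closed generating set), and handling the cardinality identities so that the transversal/counting argument is airtight. No deep idea is needed beyond Lemmas~\ref{lem:DirP} and~\ref{lem:perodiv}, which do all the heavy lifting; the content is that $G_{S_0}$-periodicity of $S_0$ is exactly the obstruction to ``aperiodicity'' and that quotienting by it is lossless up to the $|H|^k$-fold choice of transversal.
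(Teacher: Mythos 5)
Your proof is correct, and it takes a genuinely different route from the paper's. The paper argues graph-theoretically: it forms the quotient graph of $\Cay(G,S)$ with respect to the partition of $G$ into cosets of $H$, verifies that this quotient graph is isomorphic to $\Cay(G/H,(S/H)\setminus\{H\})$, and then exploits the structural facts that the bipartite subgraph between any two adjacent cosets is complete bipartite and that each coset induces a complete graph (since $H\subseteq S_0$ forces $S\cap H=H\setminus\{0\}$); the perfect-code correspondence and the transversal description are then read off from this local structure. You instead run everything through the factorization language: $G=S_0\oplus X$ is equivalent, by Lemma~\ref{lem:perodiv} applied with $H=G_{S_0}$, to $G/H=(S_0/H)\oplus(X/H)$ together with $H\cap(X-X)=\{0\}$, and the latter condition is precisely the statement that $X$ is a transversal of the cosets in $X/H$; Lemma~\ref{lem:DirP}(a) then translates both factorizations back into perfect codes, and the count $|H|^k$ falls out of counting transversals. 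Your argument is shorter and arguably cleaner, since Lemma~\ref{lem:perodiv} packages exactly the equivalence needed; the paper's proof is longer but yields the additional structural picture (complete bipartite blocks between cosets, cliques within cosets) as a by-product. Two bookkeeping points you flag but should spell out if writing this up: $S\setminus H\neq\emptyset$ (otherwise $S_0=H$ would be a subgroup equal to $\langle S\rangle=G$, contradicting properness of $S$), so the quotient Cayley graph has a nonempty connection set; and $|S_0/H|=|S_0|/|H|$ because $S_0$ is a union of cosets of its own stabilizer $H$, which is what makes your cardinality computation $|X|=|Y|=k$ airtight.
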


\begin{proof}
Let $G, S$ and $H$ be as in the lemma. Note that $(S/H) \setminus \{H\} = (S \setminus H)/H$. Since $0 \in S_0$, we have $H \subseteq S_0$. Thus, $S \setminus H \ne \emptyset$, for otherwise we would have $H = S_0$ and so $G = \la S \ra = \la S_0 \ra = H = S_0$, which contradicts our assumption that $S$ is a proper subset of $G \setminus \{0\}$. Hence $\Cay(G/H, (S/H) \setminus \{H\})$ is well defined. 

Denote $\Ga = \Cay(G, S)$ and $\Si = \Cay(G/H, (S/H) \setminus \{H\})$. Denote by $\PP$ the set of cosets of $H$ in $G$ and $\Ga_{\PP}$ the quotient graph of $\Ga$ with respect to the partition $\PP$ of $G$. That is, $\Ga_{\PP}$ has vertex set $\PP$ such that two cosets of $H$ in $G$ are adjacent if and only if there is at least one edge of $\Ga$ between them. Since $H = G_{S_0}$, we have $S_0 + H = S_0$. Using this fact one can verify that the following hold for any pair of distinct cosets $H+x, H+y \in \PP$: $H+x$ and $H+y$ are adjacent in $\Ga_{\PP}$ $\Leftrightarrow$ there exist $a, b \in H$ such that $a+x$ and $b+y$ are adjacent in $\Ga$ $\Leftrightarrow$ there exist $a, b \in H$ and $s \in S$ such that $(a+x)-(b+y) = s$ $\Leftrightarrow$ there exist $a, b \in H$ and $s \in S$ such that $x-y = -a+s+b$ $\Leftrightarrow$ there exist $a \in H$ and $s' \in S$ such that $x-y = -a + s'$ $\Leftrightarrow$ $(H+x)-(H+y) \in (S/H) \setminus \{H\}$ $\Leftrightarrow$ $H+x$ and $H+y$ are adjacent in $\Si$. Therefore, $\Ga_{\PP}$ and $\Si$ are isomorphic. 

If two distinct cosets $H+x$ and $H+y$ are adjacent in $\Si$, then $x-y = a+s$ for some $a \in H$ and $s \in S$. Since $a + S_0 = S_0$, we have $x-y \in S$ and hence $x$ and $y$ are adjacent in $\Ga$. (Here we use the assumption that $G$ is abelian.) Since this holds for any representatives $x, y$ of $H+x, H+y$ respectively, we obtain that, for any $x' \in H+x$ and $y' \in H+y$, $x'$ and $y'$ are adjacent in $\Ga$. In other words, the bipartite subgraph of $\Ga$ between any two adjacent cosets of $H$ is a complete bipartite graph. On the other hand, the subgraph of $\Ga$ induced by the subset $H$ of $G$ is the Cayley graph $\Cay(H, S \cap H)$, and the subgraph of $\Ga$ induced by each coset $H+z$ is isomorphic to $\Cay(H, S \cap H)$. Note that $S \cap H = H \setminus \{0\}$ as $H \subseteq S_0$. Hence $\Cay(H, S \cap H)$ is a complete graph. 

Suppose that $\Ga$ admits a perfect code, say, $X$. Since $\Ga$ is connected (as $\la S \ra = G$) and the bipartite subgraph of $\Ga$ between any two adjacent cosets of $H$ is a complete bipartite graph, $X$ contains at most one vertex from each coset of $H$, that is, $|X \cap (H+z)| \le 1$ for each $z \in G$. On the other hand, as seen above the subgraph of $\Ga$ induced by each coset of $H$ is a complete graph. It follows that $X/H$ is a perfect code in $\Si$. Note that $H+x \ne H+x'$ for distinct $x, x' \in X$, for otherwise we would have $x, x' \in X \cap (H+z)$, which contradicts the fact that $|X \cap (H+z)| \le 1$. Hence $X/H$ and $X$ have the same cardinality. 

Conversely, if $\Si$ admits a perfect code, say, $X/H = \{H+x_1, \ldots, H+x_k\}$, then by taking exactly one element $y_i$ from each $H+x_i$ we obtain a prefect code $\{y_1, \ldots, y_k\}$ in $\Ga$. In this way we obtain exactly $|H|^k$ perfect codes in $\Ga$ from $X/H$. 
\qed
\end{proof} 

Lemma \ref{lem:redn} can be used to find whether a Cayley graph of an abelian group admits a perfect code by finding whether a certain quotient of it admits a perfect code, the latter being easier in some cases. As an example, let us consider $G = \ZZZ_{42}$ and 
$$S = \{1,5,6,7,11,12,13,17,18,19,23,24,25,29,30,31,35,36,37,41\} \subset G \setminus \{0\}.$$ One can see that $\Cay(G, S)$ admits $C = \{13,22\}$ as a perfect code. We have $H = G_{S_0} = 6 \ZZZ_{42}$, $G/H \cong \ZZZ_6$, $S/H = \{H, H+1, H+5\}$, and $\Cay(G/H, (S/H) \setminus \{H\}) \cong \Cay(\ZZZ_6, \{1, 5\})$ admits $C/H = \{H+1, H+4\}$ (identified with $\{1, 4\}$ in $\Cay(\ZZZ_6, \{1, 5\})$) as a perfect code.   

The ``only if'' part of the following result (\cite[Theorem 3.2]{DSLW16}) can be proved using Lemma \ref{lem:redn}: Any connected non-complete circulant $\Cay(\ZZZ_n, S)$ with degree $|S| = pq -1$ for some (not necessarily distinct) prime divisors $p, q$ of $n$ such that $n/pq$ is coprime to $pq$ and $S_0$ is periodic admits a perfect code if and only if $s \not \equiv s'$ (mod $pq$) for distinct $s, s' \in S_0$. (The ``if'' part of this result follows from the special case $d=1$ in Lemma \ref{lem:abelPCsuff}.) In fact, since $S_0$ is periodic, the subgroup of periods $H$ of $S_0$ in $G$ is nontrivial. Since $S_0$ is the union of some cosets of $H$ in $\ZZZ_n$, $|H|$ is a divisor of $|S_0| = pq$. Since $\Cay(\ZZZ_n, S)$ is connected, we have $\la S_0 \ra = \la S \ra = \ZZZ_n$. Since $\Cay(\ZZZ_n, S)$ is non-complete, $S_0$ is a proper subset of $\ZZZ_n$, which implies that $H$ is a proper subset of $S_0$. Hence $|H| \in \{p, q\}$. Without loss of generality we may assume $|H| = p$ and $S_0 = \cup_{i=0}^{q-1} (H+t_i)$, where $t_0 = 0$ and $t_i \not \equiv t_j$ (mod $(n/p)$) for distinct $i, j$. Then $\Cay(\ZZZ_n/H, (S/H) \setminus \{H\}) \cong \Cay(\ZZZ_{n/p}, T)$, where $T = \{t_0, t_1, \ldots, t_{q-1}\}$. Assume that $\Cay(\ZZZ_n, S)$ admits a perfect code. Then, by Lemma \ref{lem:redn}, $\Cay(\ZZZ_n/H, (S/H) \setminus \{H\})$ admits a perfect code. Since this circulant has degree $q-1$ and $q$ is a prime, by \cite[Theorem 3.1]{DSLW16} or \cite[Theorem 1.1]{FHZ17}, we have $t_i \not \equiv t_j$ (mod $q$) for distinct $i, j$. Since $pq$ and $n/pq$ are coprime, we then obtain $s \not \equiv s'$ (mod $pq$) for distinct $s, s' \in S_0$, establishing the necessity of the above-mentioned result. In \cite[Theorem 3.2]{DSLW16}, it was also stated (in a different way) that all perfect codes in $\Cay(\ZZZ_n, S)$ are of the form as described in Lemma \ref{lem:redn}. 

Finally, in \cite[Theorem 3.3]{YPD14} it was proved that any circulant $\Cay(\mathbb{Z}_n, S)$ such that $S_0$ is periodic and $n = |S_0| p$ for a prime $p$ admits a perfect code if and only if $s_i \not \equiv s_j$ (mod $r$) for $0 \le i < j \le r-1$, where $H$ is the subgroup of periods of $S_0$ in $\ZZZ_n$, $r = |S_0|/|H|$, and $S_0 / H = \{H+s_0, H+s_1, \ldots, H+s_{r-1}\}$ with $0 \leq s_i \leq pr - 1$ for each $i$. Moreover, under this condition all perfect codes in $\Cay(\mathbb{Z}_n, S)$ are of the form $C_{a_0, a_1, \ldots, a_{p-1}} + i$ for $\{a_0, a_1, \ldots, a_{p-1}\} \subseteq H$ and $0 \le i \le r-1$, where $C_{a_0, a_1, \ldots, a_{p-1}} = \{jr+ a_j: j = 0, 1, \ldots, p-1\}$. These results can be proved using Lemma \ref{lem:redn} and \cite[Theorem 3.2]{YPD14}.


\section{Circulant graphs}
\label{sec:tpc}

If $s \not \equiv s'$ (mod $|S_0|$) for any two distinct $s, s' \in S_0$, then $\Cay(\ZZZ_n, S)$ admits a perfect code (see \cite[Remark 1]{OPR07} and Lemma \ref{lem:abelPCsuff}), and in this case $k \ZZZ_n$ with $k = |S_0|$ is a perfect code in $\Cay(\ZZZ_n, S)$. This sufficient condition is known to be necessary as well in the following cases: (a) $|S| = p-1$, where $p$ is an odd prime (see \cite[Theorem 3.1]{DSLW16} and \cite[Theorem 1.1]{FHZ17}); (b) $|S| = p^l - 1$, where $p$ is a prime and $l$ is the exponent of $p$ in $n$ (see \cite[Theorem 3.4]{DSLW16} and \cite[Theorem 1.2]{FHZ17}); (c) $|S| = pq -1$ for some (not necessarily distinct) prime divisors $p, q$ of $n$ with $pq < n$ such that $n/pq$ and $pq$ are coprime, $\la S \ra = \ZZZ_n$, and $S_0$ is periodic (see \cite[Theorem 3.2]{DSLW16}); (d) $n \in \{p^{k}, p^{k}q, p^{2}q^{2}, pqr, p^{2}qr, pqrs: \text{ $p, q, r, s$ are primes and $k \ge 1$}\}$, $\la S \ra = \ZZZ_n$, $|S_0| < n$, and $n/|S_0|$ and $|S_0|$ are coprime (see \cite[Theorem 3.5]{DSLW16}); (e) $n/|S_0|$ is a prime and $S_0$ is aperiodic (see \cite[Theorem 3.2]{YPD14}). In this section we prove among other things that the counterparts of these results for total perfect codes are also true.   

\begin{lemma}
\label{lem:suff} 
(\cite[Lemma 2.5]{FHZ17})
Let $n \ge 4$ be an integer and $S$ an inverse-closed subset of $\ZZZ_n \setminus \{0\}$ with $\la S \ra = \ZZZ_n$. If $|S|$ divides $n$ and $s \not \equiv s'$ (mod $|S|$) for any two distinct $s, s' \in S$, then $\Cay(\mathbb{Z}_{n},S)$ admits a total perfect code. Moreover, in this case the subgroup $k \ZZZ_n$ of $\mathbb{Z}_{n}$ is a total perfect code in $\Cay(\mathbb{Z}_{n},S)$, where $k = |S|$.  
\end{lemma}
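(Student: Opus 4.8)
The plan is to verify directly that $C = k\ZZZ_n$, with $k = |S|$, is a total perfect code in $\Cay(\ZZZ_n, S)$, using the factorization characterisation of Lemma~\ref{lem:DirP}(b) together with the two-out-of-three criterion of Lemma~\ref{lem:DirPprop}. This is in fact the special case $d = 1$ of Lemma~\ref{lem:abelTPCsuff}, so one could simply invoke that lemma; I will instead spell out the short self-contained argument.

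First I would record the parameters and check the cardinality condition. Since $k = |S|$ divides $n$, the cyclic subgroup $C = k\ZZZ_n$ of $\ZZZ_n$ has order $|C| = n/k$, so that $|S|\,|C| = n = |\ZZZ_n|$; this is condition~(c) of Lemma~\ref{lem:DirPprop}. Next I would compute the two difference sets. Because $C$ is a subgroup, $C - C = C = k\ZZZ_n$, i.e.\ $C - C$ is precisely the set of residues divisible by $k$. A nonzero element of $S - S$ has the form $s - s'$ with $s \ne s'$ in $S$, and the hypothesis $s \not\equiv s'$ (mod $k$) says exactly that $s - s' \notin k\ZZZ_n$; hence $(S - S) \cap (C - C) = \{0\}$, which is condition~(b). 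By Lemma~\ref{lem:DirPprop}, conditions~(b) and~(c) together yield $\ZZZ_n = S \oplus C$, and then Lemma~\ref{lem:DirP}(b) gives that $C$ is a total perfect code in $\Cay(\ZZZ_n, S)$, as required.

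Finally I would add a short remark as a sanity check, since a total perfect code necessarily has even size: the $k$ residues $s \bmod k$ for $s \in S$ are pairwise distinct and hence exhaust $\ZZZ_k$, so some $s_0 \in S$ satisfies $s_0 \equiv 0$ (mod $k$); as $S = -S$ and these residues are distinct we get $s_0 = -s_0$, so $2s_0 = 0$ in $\ZZZ_n$ and therefore $s_0 = n/2$ (because $s_0 \ne 0$), whence $k \mid n/2$ and $|C| = n/k$ is even. The hypothesis $\la S \ra = \ZZZ_n$ is used only to ensure that $\Cay(\ZZZ_n, S)$ is connected, so that ``total perfect code'' is the intended notion; it does not enter the factorization computation. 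There is essentially no obstacle in this proof --- it is a direct verification --- and the only points needing a little care are to apply the right part of Lemma~\ref{lem:DirP} (a total perfect code corresponds to the factorization $(S, C)$, not $(S_0, C)$) and to use that $C - C = C$ precisely because $C$ is a subgroup.
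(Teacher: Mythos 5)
Your proof is correct and follows essentially the same route the paper takes: the paper treats this lemma as the special case $d=1$ of Lemma~\ref{lem:abelTPCsuff}, whose proof (modelled on that of Lemma~\ref{lem:abelPCsuff}) is exactly your verification that $C=k\ZZZ_n$ satisfies conditions (b) and (c) of Lemma~\ref{lem:DirPprop} and then an appeal to Lemma~\ref{lem:DirP}(b). Your closing parity check is a nice extra consistency observation but is not needed for the argument.
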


In general, the sufficient condition in this lemma is not necessary. In what follows we identify five cases for which this sufficient condition is also necessary. See Table \ref{tab2} for a summary of our results in this section along with two known results of the same flavour, where $p$ and $q$ are (not necessarily distinct) primes, $k \ge 1$ is an integer, and $|S|$ is required to be a divisor of $n$.  

{\small
\begin{table}
\begin{center}
\begin{tabular}{|c|c|c|c|c|c|c|c|}
\hline 
$n$ & $|S|$ & $C$ & $|S|$ and $|C |$ & Periodicity & All total & Reference \\
& & & coprime? & of $S$ & perfect codes &  \\
& & &  & & have been &  \\
& & &  & & determined? & \\
\hline 
 & $p$ &   & not required &   & unknown & \cite[Theorem 1.3]{FHZ17} \\ 
 & (odd) &   &   &   &  & \\ \hline
 & $p^l$ &    & required &   & unknown & \cite[Theorem 1.4]{FHZ17} \\ \hline 
 & $pq$ &    &  required & periodic & unknown & Theorem \ref{thm:CTPCforPQSP} \\ \hline
 $n \in N$ & $pq$ &    & required & aperiodic & unknown & Lemma \ref{lem:CTPCforPQSA} \\ \hline
 $n \in N$ & &   &  required &   & unknown & Theorem \ref{thm:CTPCforNbar} \\ \hline
 & & $C \le \ZZZ_n$ & not required &   & unknown & Theorem \ref{thm:CTPCforsubgp} \\ \hline
 $n = |S| p$ & & $|C|=p$ & not required & aperiodic & yes & Theorem \ref{thm:CTPCforPdA} \\ \hline	
\end{tabular}
\caption{Some cases under which $\Cay(\ZZZ_n, S)$ admits a total perfect code if and only if $s \not \equiv s'$ (mod $|S|$) for distinct $s, s' \in S$, where $N = \{p^{k}, p^{k}q, p^{2}q^{2}, pqr, p^{2}qr, pqrs: \text{ $p, q, r, s$ are primes and $k \ge 1$}\}$.}
\label{tab2}
\end{center}
\end{table}
}
  
\begin{theorem}
\label{thm:CTPCforsubgp} 
Let $n \ge 4$ be an integer and let $S$ be an inverse-closed subset of $\ZZZ_n \setminus \{0\}$ such that $\la S \ra = \ZZZ_n$ and $|S|$ divides $n$. Then $\Cay(\mathbb{Z}_n,S)$ admits a subgroup of $\mathbb{Z}_n$ as a total perfect code if and only if $s \not \equiv s'$ (mod $|S|$) for distinct $s, s' \in S$.
\end{theorem}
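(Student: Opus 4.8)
The plan is to prove both directions, with the easier ``if'' direction following immediately from Lemma \ref{lem:suff}. Indeed, if $s \not\equiv s'$ (mod $|S|$) for all distinct $s, s' \in S$, then since $|S|$ divides $n$ and $\la S \ra = \ZZZ_n$, Lemma \ref{lem:suff} gives that the subgroup $k\ZZZ_n$ with $k = |S|$ is a total perfect code in $\Cay(\ZZZ_n, S)$; this is precisely a subgroup of $\ZZZ_n$, so this direction is done. (Note $n \ge 4$ ensures $|S| \ge 2$, so such subgroups are proper and nontrivial when needed.)

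For the ``only if'' direction, suppose $\Cay(\ZZZ_n, S)$ admits a subgroup $C$ of $\ZZZ_n$ as a total perfect code. First I would invoke Lemma \ref{lem:DirP}(b) and the remark after it: $(S, C)$ is a factorization of $\ZZZ_n$, so $|S||C| = n$, and since $|S|$ divides $n$ we get $|C| = n/|S|$. Now $C$ is a subgroup of $\ZZZ_n$ of order $n/|S|$, hence $C = |S|\ZZZ_n = k\ZZZ_n$ with $k = |S|$. The key structural fact to extract from $C$ being a total perfect code is that the induced subgraph on $C$ is a perfect matching; equivalently, by Lemma \ref{lem:DirPprop} applied to the factorization $(S,C)$, we have $(S - S) \cap (C - C) = \{0\}$. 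Since $C = k\ZZZ_n$ is a subgroup, $C - C = C = k\ZZZ_n$. Therefore $(S - S) \cap k\ZZZ_n = \{0\}$, which is exactly the statement that for any two distinct $s, s' \in S$, $s - s' \notin k\ZZZ_n$, i.e. $s \not\equiv s'$ (mod $k = |S|$). This completes the necessity.

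The proof is short, and the only mild subtlety — the step I would flag as needing a sentence of care — is confirming that the only subgroup of $\ZZZ_n$ of order $n/|S|$ is $|S|\ZZZ_n$ (true because $\ZZZ_n$ is cyclic, hence has a unique subgroup of each order dividing $n$), and then that for a subgroup $C$ one genuinely has $C - C = C$. Both are routine. I do not expect any real obstacle here; the theorem is essentially a direct packaging of Lemma \ref{lem:suff} with Lemmas \ref{lem:DirP} and \ref{lem:DirPprop}, specialised to the situation where the candidate code is forced to be the cyclic subgroup $|S|\ZZZ_n$.
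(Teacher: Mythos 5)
Your proposal is correct and follows essentially the same route as the paper: sufficiency via Lemma \ref{lem:suff}, then necessity by combining Lemma \ref{lem:DirP} and Lemma \ref{lem:DirPprop} to get $|C| = n/|S|$ and $(S-S)\cap(C-C)=\{0\}$, identifying $C$ as the unique subgroup $|S|\ZZZ_n$ of that order, and reading off the congruence condition. The only cosmetic difference is notational (the paper writes $k = n/|S|$ where you write $k = |S|$), and your aside about the induced matching is unnecessary but harmless.
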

	
\begin{proof}
The sufficiency follows from Lemma \ref{lem:suff}. To prove the necessity, let $n$ and $S$ be as in the theorem and let $n = k|S|$. Suppose that $\Cay(\mathbb{Z}_n,S)$ admits a total perfect code $C$ which is a subgroup of $\mathbb{Z}_n$. Then $\mathbb{Z}_n = S \oplus C$ by Lemma \ref{lem:DirP}. Thus, by Lemma \ref{lem:DirPprop}, $(S-S) \cap (C-C) = \{0\}$ and $|C| = n/|S| = k$. Since the only subgroup of $\mathbb{Z}_n$ with order $k$ is $(n/k) \ZZZ_n$, we have $C = (n/k) \ZZZ_n$ and hence $C - C = (n/k) \ZZZ_n$. This together with $(S-S) \cap (C-C) = \{0\}$ implies $s-s' \not \in (n/k) \ZZZ_n$, or equivalently $s \not \equiv s'$ (mod $|S|$) for distinct $s, s' \in S$.  
\qed
\end{proof}

\begin{theorem}
\label{thm:CTPCforPdA}  
Let $n \ge 4$ be an integer. Let $S$ be an inverse-closed subset of $\ZZZ_n \setminus \{0\}$ such that $\la S \ra = \ZZZ_n$ and $n = p|S|$ for some prime $p$. Let $H$ be the subgroup of periods of $S$ in $\ZZZ_n$ under addition and set $k = |S|/|H|$. Then $\Cay(\mathbb{Z}_n, S)$ admits a total perfect code if and only if $p = 2$ and $s \not \equiv s'$ (mod $k$) for distinct $H+s, H+s' \in S/H$. Moreover, under this condition the total perfect codes in $\Cay(\mathbb{Z}_n, S)$ are precisely the subsets of $\mathbb{Z}_n$ of the form $\{a_0 + i, a_1 + k + i\}$ for $a_0, a_1 \in H$ and $0 \le i \le k-1$. 
\end{theorem}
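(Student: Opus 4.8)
The plan is to push everything to the quotient $\mathbb{Z}_n/H$, where $H = G_S$, thereby reducing to a factorization of the cyclic group $\mathbb{Z}_n/H$ of order $pk$ in which one factor has prime size, and then to invoke Lemma~\ref{lem:peripk}. First I would collect the easy facts. Since $H = G_S$ is the setwise stabiliser of $S$, the set $S$ is a union of cosets of $H$; as $0 \notin S$, none of these cosets equals $H$, so $H \cap S = \emptyset$, $|H|$ divides $|S|$, $n = pk|H|$, $H = pk\mathbb{Z}_n$, the quotient $\mathbb{Z}_n/H$ is cyclic of order $pk$, and $|S/H| = |S|/|H| = k$. By part (b) of Lemma~\ref{lem:DirP}, $C$ is a total perfect code in $\Cay(\mathbb{Z}_n,S)$ if and only if $\mathbb{Z}_n = S \oplus C$, and then Lemma~\ref{lem:DirPprop} gives $|C| = n/|S| = p$; since the subgraph induced by a total perfect code is a matching, $|C|$ is even, so $p = 2$. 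This already settles the ``$p=2$'' part of the condition and its necessity.

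For the congruence condition I would use Lemma~\ref{lem:perodiv} with $X = S$: $\mathbb{Z}_n = S \oplus C$ if and only if $\mathbb{Z}_n/H = (S/H) \oplus (C/H)$ and $H \cap (C-C) = \{0\}$. Suppose a total perfect code $C$ exists. Then $\mathbb{Z}_n/H = (S/H) \oplus (C/H)$ forces $|C/H| = pk/k = 2$, a prime power, so by Lemma~\ref{lem:peripk} applied in $\mathbb{Z}_n/H$ one of $S/H$, $C/H$ is periodic. By Lemma~\ref{lem:rmvperi}, $S/H$ is aperiodic, hence $C/H$ is periodic; its group of periods has order dividing $2$, so it equals the unique subgroup $T = \{H, H+k\}$ of order $2$ of the cyclic group $\mathbb{Z}_n/H$, and therefore $C/H$ is a single coset of $T$. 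Translating that factor back to $T$, Lemma~\ref{lem:DirPprop} gives $\mathbb{Z}_n/H = (S/H) \oplus T$, i.e.\ $S/H$ is a transversal of $T$ in $\mathbb{Z}_n/H$. Since two cosets $H+s, H+s'$ lie in distinct cosets of $T$ precisely when $s \not\equiv s' \pmod k$, and there are exactly $k$ cosets of $T$, being a transversal is equivalent to the stated congruence condition; this proves necessity.

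For sufficiency, assume $p = 2$ and $s \not\equiv s' \pmod k$ for distinct $H+s, H+s' \in S/H$. By the same coset/residue equivalence, the $k$ cosets $H+s$ making up $S/H$ lie in $k$ distinct cosets of $T$, hence in all of them, so $(S/H)+T = \mathbb{Z}_n/H$ and $|S/H|\,|T| = |\mathbb{Z}_n/H|$, giving $\mathbb{Z}_n/H = (S/H) \oplus T$ by Lemma~\ref{lem:DirPprop}. For any $a_0, a_1 \in H$ and any $i$ with $0 \le i \le k-1$, set $C = \{a_0 + i,\, a_1 + k + i\}$; then $C/H = \{H+i, H+k+i\}$ is the coset $T + (H+i)$, so $\mathbb{Z}_n/H = (S/H) \oplus (C/H)$ after translating $T$, while $C - C = \{0, \pm(a_1 - a_0 + k)\}$ and $a_1 - a_0 + k$ is an odd multiple of $k$, hence not in $H = 2k\mathbb{Z}_n$, so $H \cap (C-C) = \{0\}$. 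Lemma~\ref{lem:perodiv} then yields $\mathbb{Z}_n = S \oplus C$, so $C$ is a total perfect code by Lemma~\ref{lem:DirP}. Finally, for the ``precisely'' assertion: running the necessity analysis on an arbitrary total perfect code $C$ shows $|C| = 2$ and $C/H = T + (H+i)$ for some $0 \le i \le k-1$, that is $C/H = \{H+i, H+k+i\}$; since $H \cap (C-C) = \{0\}$, $C$ meets each of these two cosets in exactly one element, and writing those elements as $a_0 + i$ and $a_1 + k + i$ with $a_0, a_1 \in H$ exhibits $C$ in the asserted form.

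The computations are routine once the lemmas are lined up; the one genuinely decisive step is combining Lemma~\ref{lem:peripk} with the aperiodicity of $S/H$ (Lemma~\ref{lem:rmvperi}) to force $C/H$ to be a coset of the order-$2$ subgroup of $\mathbb{Z}_n/H$ — everything afterwards is the mechanical translation between cosets of $T$ and residues modulo $k$. I would also take care at the outset to record $H \cap S = \emptyset$, since that is what makes the cardinality bookkeeping ($|S/H| = k$, hence $|C/H| = 2$) come out exactly.
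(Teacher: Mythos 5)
Your proof is correct and follows essentially the same route as the paper's: the decisive step in both is Vuza's lemma (Lemma~\ref{lem:peripk}) combined with the aperiodicity of $S/H$ (Lemma~\ref{lem:rmvperi}) to force $C/H$ to be a coset of the order-$2$ subgroup of $\mathbb{Z}_n/H$. The only difference is organizational: the paper first settles the case where $S$ is aperiodic in $\mathbb{Z}_n$ itself (obtaining the congruence via Theorem~\ref{thm:CTPCforsubgp}) and then reduces the periodic case to it, whereas you work in the quotient $\mathbb{Z}_n/H$ uniformly from the outset, which avoids the case split.
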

	
\begin{proof}
We deal with the following two cases separately. 

\medskip
\textsf{Case 1.} $S$ is aperiodic. 

In this case we have $H = \{0\}$ and by Lemma \ref{lem:suff} it remains to prove the necessity.    

Suppose that $C$ is a total perfect code in $\Cay(\mathbb{Z}_n,S)$. Then $|C|$ is even and $\mathbb{Z}_n = S \oplus C$ by Lemma \ref{lem:DirP}. Thus, by Lemma \ref{lem:DirPprop}, $|C| = n/|S| = p$ and hence $p=2$. Since $\mathbb{Z}_n = S \oplus C$ and $S$ is aperiodic, by Lemma \ref{lem:peripk}, $C$ must be periodic. Denote by $K$ the subgroup of periods of $C$ in $\ZZZ_n$. Then $|K| > 1$ divides $|C| = 2$. Hence $|K| = 2$ and $K = (n/2) \ZZZ_n$. Since $|K| = |C|$, $C$ is a coset of $K$ in $\mathbb{Z}_n$. So we have proved that every total perfect code in $\Cay(\mathbb{Z}_n,S)$ is of the form $(n/2) \ZZZ_n + i = \{i, (n/2) + i\}$ for some $i \in \mathbb{Z}_n$. In particular, $\Cay(\mathbb{Z}_n,S)$ admits the subgroup $(n/2) \ZZZ_n$ of $\ZZZ_n$ as a total perfect code. Hence, by Theorem \ref{thm:CTPCforsubgp}, we have $s \not \equiv s'$ (mod $|S|$) for distinct $s, s' \in S$.  

\medskip
\textsf{Case 2.}  $S$ is periodic. 

In this case we have $|H| > 1$. Since $n/|H| = kp$, we have $H = (kp) \ZZZ_n$. We have $H \not \in S/H$, or equivalently, $S \cap H = \emptyset$, for otherwise, say, $h \in S \cap H$, we would have $-h \in S \cap H$ and hence $0 = (-h) + h \in S + h = S$, a contradiction. Let $S/H = \{H+t_1, H+t_2, \ldots, H+t_{k}\}$, where $0 < t_i \leq kp - 1$ for $1 \le i \le k$. By Lemmas \ref{lem:DirP} and \ref{lem:perodiv}, we have: $\Cay(\mathbb{Z}_n, S)$ admits a total perfect code $C$ $\Leftrightarrow$ $\mathbb{Z}_n = S \oplus C$ $\Leftrightarrow$ $\mathbb{Z}_n / H = (S / H) \oplus (C/H)$ and $H \cap (C - C) = \{0\}$ $\Leftrightarrow$ $\Cay(\mathbb{Z}_n/H, S/H)$ admits $C/H$ as a total perfect code. Since $H = (kp) \ZZZ_n$, we have $s \not \equiv s'$ (mod $k$) for distinct $H+s, H+s' \in S/H$ if and only if $t_i \not \equiv t_j$ (mod $k$) for distinct $i, j \in \{1, 2, \ldots, k\}$. 

Suppose that $\Cay(\mathbb{Z}_n, S)$ admits a total perfect code. Then $p = 2$ as seen in Case 1 and so $n/|H| = 2k$. Also, from the previous paragraph, $\Cay(\mathbb{Z}_n/H, S/H)$ admits a total perfect code. Since $\phi: \mathbb{Z}_n / H \rightarrow \mathbb{Z}_{2k}$ defined by $\phi(H+x) = x$, $0 \le x < 2k$, is a group isomorphism, it follows that $\Cay(\mathbb{Z}_{2k}, T)$ admits a total perfect code. Moreover, $T = \{t_1, t_2, \ldots, t_{k}\}$ is aperiodic as $S/H$ is aperiodic by Lemma \ref{lem:rmvperi}. It follows from what we proved in Case 1 that $t_i \not \equiv t_j$ (mod $k$) for distinct $i, j \in \{1, 2, \ldots, k\}$, as required.

Now suppose that $p = 2$ and $t_i \not \equiv t_j$ (mod $k$) for distinct $i, j \in \{1, 2, \ldots, k\}$. Then by what we proved in Case 1, $\Cay(\mathbb{Z}_{2k}, T)$, and hence $\Cay(\mathbb{Z}_n/H, S/H)$, admit a total perfect code. So $\Cay(\mathbb{Z}_n, S)$ admits a total perfect code by our discussion above. Moreover, by what we proved in Case 1, any total perfect code in $\Cay(\mathbb{Z}_{2k}, T)$ is a coset of the subgroup $k \ZZZ_{2k} = \{0, k\}$ of $\mathbb{Z}_{2k}$ and hence is of the form $k \ZZZ_{2k} + i$, $0 \le i \le k-1$. Therefore, the total perfect codes in $\Cay(\mathbb{Z}_n, S)$ are exactly the subsets of $\mathbb{Z}_n$ of the form $\{a_0 + i, a_1 + k + i\}$ for $a_0, a_1 \in H$ and $0 \le i \le k-1$.  
\qed
\end{proof}

In Theorem \ref{thm:CTPCforPdA} the condition $s \not \equiv s'$ (mod $k$) for distinct $H+s, H+s' \in S/H$ is independent of the choice of the representatives $s, s'$ of $H+s, H+s'$, respectively. In the case when $S$ is aperiodic or $S$ is periodic and $p$ is not a divisor of $|S|$, this condition is equivalent to that $s \not \equiv s'$ (mod $|S|$) for distinct $s, s' \in S$.  
 
\begin{theorem}
\label{thm:CTPCforPQSP} 
Let $n \ge 6$ be an integer, and let $p$ and $q$ be primes such that $pq$ divides $n$ and $pq$ is coprime to $n/pq$. Let $S$ be an inverse-closed subset of $\ZZZ_n \setminus \{0\}$ such that $\la S \ra = \ZZZ_n$, $|S| = pq$ and $S$ is periodic. Then $\Cay(\mathbb{Z}_n,S)$ admits a total perfect code if and only if $s \not \equiv s'$ (mod $pq$) for distinct $s, s' \in S$.
\end{theorem}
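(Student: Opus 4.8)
The plan is to mimic Case~2 of the proof of Theorem~\ref{thm:CTPCforPdA}: reduce modulo the subgroup of periods of $S$ and feed the quotient into Theorem~\ref{thm:ATPCforP}. Sufficiency needs nothing new: if $s \not\equiv s' \pmod{pq}$ for distinct $s, s' \in S$, then, since $|S| = pq$ divides $n$ and $n \ge 6$, Lemma~\ref{lem:suff} produces the total perfect code $(pq)\ZZZ_n$. So I will concentrate on necessity and assume $\Cay(\ZZZ_n, S)$ admits a total perfect code $C$.

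Put $H := G_S$. Periodicity of $S$ makes $H$ nontrivial, and since $S = -S$ and $0 \notin S$ we get $S \cap H = \emptyset$ (if $h \in S \cap H$ then $0 = (-h)+h \in S + h = S$), so $H$ is a proper subgroup. Because $|H|$ divides $|S| = pq$, the integer $r := |S/H| = pq/|H|$ lies in $\{1, p, q\}$, $H = (n/|H|)\ZZZ_n$, and $S$ is a union of $r$ cosets of $H$; moreover, via the group isomorphism $\ZZZ_n/H \cong \ZZZ_{n/|H|}$, the graph $\Cay(\ZZZ_n/H, S/H)$ is identified with a connected circulant $\Cay(\ZZZ_{n/|H|}, T)$ of degree $r$, where $T$ is the image of $S/H$. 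Exactly as in Theorem~\ref{thm:CTPCforPdA} (Case~2), Lemmas~\ref{lem:DirP} and~\ref{lem:perodiv} give that $\Cay(\ZZZ_n, S)$ admits a total perfect code if and only if $\Cay(\ZZZ_{n/|H|}, T)$ does: forward, the condition $H \cap (C-C)=\{0\}$ from Lemma~\ref{lem:perodiv} says $C$ meets each coset of $H$ at most once, so $C$ descends to a total perfect code of the quotient; backward, one lifts by choosing a single representative in each coset, which automatically keeps $H \cap (C-C) = \{0\}$. Hence $\Cay(\ZZZ_{n/|H|}, T)$ has a total perfect code.

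Next I would eliminate the value $r = 2$. In that case $\Cay(\ZZZ_{n/|H|}, T)$ is a connected $2$-regular circulant, hence a cycle of length $n/|H|$, and a cycle admits a total perfect code only when its length is divisible by $4$ (the code is forced to be a union of adjacent pairs separated by gaps of length $2$). But $r = 2$ forces $|H| = pq/2$, so $n/|H| = 2(n/pq)$ with $n/pq$ odd by the hypothesis $\gcd(pq, n/pq) = 1$, whence $n/|H| \equiv 2 \pmod 4$ --- a contradiction with the previous paragraph. So $r$ is $1$ or an odd prime; in the latter case Theorem~\ref{thm:ATPCforP} applies to $\Cay(\ZZZ_{n/|H|}, T)$ (degree an odd prime dividing $n/|H|$, and $\langle T \rangle = \ZZZ_{n/|H|}$) and, since that graph has a total perfect code, yields $t_i \not\equiv t_j \pmod r$ whenever $H+t_i$, $H+t_j$ are distinct cosets of $H$ meeting $S$. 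It then remains to upgrade this to $s \not\equiv s' \pmod{pq}$ for all distinct $s, s' \in S$, split according to whether $s, s'$ lie in a common coset of $H$. If they do, then $s - s' = (n/|H|)k$ with $k$ a nonzero residue modulo $|H|$, and (writing $n = pq\cdot(n/pq)$ with $\gcd(pq, n/pq)=1$ and using $|H|\mid pq$) the relation $pq \mid (s-s')$ would force $|H|\mid k$, impossible; this already covers every pair when $r=1$, since then $S$ is a single coset of $H$. If instead $s\in H+t_i$ and $s'\in H+t_j$ with $i\ne j$ --- which forces $r$ to be an odd prime --- then reducing $s \equiv t_i$, $s' \equiv t_j \pmod{n/|H|}$ modulo $r$ (legitimate as $r\mid n/|H|$) and using $t_i\not\equiv t_j\pmod r$ gives $s\not\equiv s'\pmod r$, hence $s\not\equiv s'\pmod{pq}$ because $r\mid pq$. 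This proves the necessity.

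The step I expect to demand the most care is the value $r = 2$: here one genuinely leaves the toolbox of Theorems~\ref{thm:ATPCforP}--\ref{thm:ATPCforPl} (which are stated only for odd primes) and must rule out a total perfect code by the ad hoc cycle observation, using precisely the coprimality $\gcd(pq, n/pq) = 1$ to pin down $n/|H| \bmod 4$. The remaining work --- keeping straight which divisor of $pq$ equals $|H|$, isolating the degenerate case $r = 1$, and lifting a non-congruence modulo $r$ to one modulo $pq$ --- is routine, but the quotient identification and the case split are where errors would hide.
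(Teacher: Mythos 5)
Your proposal is correct and follows essentially the same route as the paper's proof: reduce modulo the subgroup of periods $H = G_S$ (using Lemmas~\ref{lem:DirP} and~\ref{lem:perodiv}, with $S \cap H = \emptyset$), handle the case $|H| = pq$ by a direct arithmetic/coprimality argument (the paper invokes Lemma~\ref{lem:shortpf}), apply the prime-degree total perfect code theorem to the quotient circulant $\Cay(\ZZZ_{n/|H|}, T)$, and lift the non-congruence from $T$ back to $S$. The one genuine difference is your explicit elimination of the degree-$2$ quotient (when $2 \in \{p,q\}$ and $|H| = pq/2$) via the cycle-of-length-$\equiv 2 \pmod 4$ observation; this is a worthwhile extra step, since the paper's appeal to \cite[Theorem 1.3]{FHZ17} at that point is stated only for odd primes and silently skips this subcase.
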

	
\begin{proof}
The sufficiency follows from Lemma \ref{lem:suff}. It remains to prove the necessity. 

Let $n = kpq$. Then $k \ge 2$ is coprime to $pq$ by our assumption. Suppose that $\Cay(\mathbb{Z}_n,S)$ admits a total perfect code $C$. Then $\mathbb{Z}_n = S \oplus C$ by Lemma \ref{lem:DirP}. Since $S$ is periodic, the subgroup of periods $H$ of $S$ in $\ZZZ_n$ has order $|H| > 1$. Since $|H|$ is a divisor of $|S| = pq$, we have $|H| \in \{pq, p, q\}$. If $|H| = pq$, then $|S| = |H| = pq$ is coprime to $n/|S|$, and thus by Lemma \ref{lem:shortpf}, $s \not \equiv s'$ (mod $pq$) for distinct $s, s' \in S$. 

It remains to consider the case when $|H| \in \{p, q\}$. Without loss of generality we may assume that $|H| = p$. Then $H = (kq) \ZZZ_n$ and $|S/H| = q$. Let $S/H = \{H+t_1, H+t_2, \ldots, H+t_q\}$ and $T = \{t_1, t_2, \ldots, t_q\}$, where $0 \leq t_i \leq kq - 1$. Note that $S \cap H = \emptyset$, for otherwise, say, $h \in S \cap H$, we would have $-h \in S \cap H$ and hence $0 = (-h) + h \in S + h = S$, a contradiction. Hence $H \not \in S/H$. Since $\mathbb{Z}_n = S \oplus C$, by Lemma \ref{lem:perodiv}, $\mathbb{Z}_n / H = (S/H) \oplus (C/H)$ and $H \cap (C-C) = \{0\}$. By Lemma \ref{lem:DirP}, $C/H$ is a total perfect code in $\Cay(\mathbb{Z}_n / H, S / H)$. Since $\phi(H+x) = x$, $H+x \in \mathbb{Z}_n / H$, defines an isomorphism $\phi$ from $\mathbb{Z}_n / H$ to $\mathbb{Z}_{kq}$, it follows that $\Cay(\mathbb{Z}_{kq}, T)$ admits a total perfect code. Since $|T| = q$ is a prime, by \cite[Theorem 1.3]{FHZ17}, we conclude that $t_i \not \equiv t_j$ (mod $q$) for distinct $t_i, t_j \in T$. 
	
For distinct $s, s' \in S$, there exist $t, t' \in T$ such that $s \in H+t$ and $s' \in H+t'$. If $t = t'$, then $s - s' = ckq$ for some integer $c$ with $0 < |c| \leq p - 1$. Since $k$ is coprime to $pq$, $p$ does not divide $ck$ and therefore $s - s' \neq 0$ (mod $pq$). If $t \neq t'$, then $s - s' = ckq + t - t'$ for some integer $c$ with $|c| \leq p - 1$. Since $t \not \equiv t'$ (mod $q$), we have $s - s' \neq 0$ (mod $q$) and thus $s - s' \neq 0$ (mod $pq$). In either case, we have  $s \not \equiv s'$ (mod $pq$) for distinct $s, s' \in S$.  
\qed
\end{proof}

\begin{lemma}
\label{lem:CTPCforPQSA}
Let $n \in \{p^{k}, p^{k}q, p^{2}q^{2}, pqr, p^{2}qr, pqrs: \text{ $p, q, r, s$ are primes and $k \ge 1$}\}$, and let $p$ and $q$ be primes such that $pq$ divides $n$ and $pq$ is coprime to $n/pq$. Let $S$ be an inverse-closed subset of $\ZZZ_n \setminus \{0\}$ such that $\la S \ra = \ZZZ_n$, $|S| = pq$ and $S$ is aperiodic. Then $\Cay(\mathbb{Z}_n,S)$ admits a total perfect code if and only if $s \not \equiv s'$ (mod $pq$) for distinct $s, s' \in S$.
\end{lemma}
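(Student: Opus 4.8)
I would first dispatch the sufficiency: since $|S|=pq$ divides $n$ and $s\not\equiv s'\pmod{pq}$ for distinct $s,s'\in S$, Lemma \ref{lem:suff} gives a total perfect code. For the necessity, suppose $\Cay(\ZZZ_n,S)$ admits a total perfect code $C$. By Lemma \ref{lem:DirP}, $\ZZZ_n=S\oplus C$, and by Lemma \ref{lem:DirPprop}, $|C|=n/|S|=:m$. Since $|C|$ is even (a total perfect code induces a matching) and $\gcd(pq,m)=1$ by hypothesis, both $p$ and $q$ are odd. The first point I would record is that $\ZZZ_n$ is a \emph{good} cyclic group (because $n$ lies in $N$, the list of orders of good cyclic groups from Section \ref{sec:good}) and, crucially, that $N$ is closed under taking divisors, so every quotient $\ZZZ_n/K$ with $|K|<n$ is again a good cyclic group whose order lies in $N$. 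Since $S$ is aperiodic, goodness forces $C$ to be periodic; set $K=G_C$, so $1<|K|$ and $|K|\mid m$.

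Next I would pass to $\ZZZ_n/K$. Applying Lemma \ref{lem:perodiv} to $\ZZZ_n=C\oplus S$ (with $X=C$, $H=G_C=K$) gives $\ZZZ_n/K=(C/K)\oplus(S/K)$ together with $K\cap(S-S)=\{0\}$; the latter says distinct elements of $S$ lie in distinct cosets of $K$, so $|S/K|=|S|=pq$, and then Lemma \ref{lem:DirPprop} yields $|C/K|=|\ZZZ_n/K|/(pq)=m/|K|$. If $m/|K|=1$, then $K$ is the unique subgroup of $\ZZZ_n$ of order $m$, namely $pq\ZZZ_n$; since $C$ is a union of cosets of $K=G_C$ with $|C|=|K|$, it is a single coset of $pq\ZZZ_n$, whence $C-C=pq\ZZZ_n$. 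Then $(S-S)\cap(C-C)=\{0\}$ (Lemma \ref{lem:DirPprop}) gives $(S-S)\cap pq\ZZZ_n=\{0\}$, i.e.\ $s\not\equiv s'\pmod{pq}$ for distinct $s,s'\in S$, which is what we want.

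It remains to handle $m/|K|>1$. By Lemma \ref{lem:rmvperi}, $C/K$ is aperiodic in $\ZZZ_n/K$; since $\ZZZ_n/K$ is a good cyclic group and both $|C/K|=m/|K|>1$ and $|S/K|=pq>1$, goodness forces $S/K$ to be periodic. Note $|\ZZZ_n/K|=n/|K|=pq\,(m/|K|)\in N$ with $pq$ coprime to its cofactor $m/|K|$. If $K\notin S/K$, then $S/K$ is a legitimate connection set and $C/K$ is a total perfect code in $\Cay(\ZZZ_n/K,S/K)$, a circulant whose order lies in $N$, with $S/K$ periodic of size $pq$; Theorem \ref{thm:CTPCforPQSP} then yields $\bar s\not\equiv\bar s'\pmod{pq}$ in $\ZZZ_n/K$ for distinct $\bar s,\bar s'\in S/K$. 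If instead $K\in S/K$, one checks from $K\cap(S-S)=\{0\}$ and $S=-S$ that $S\cap K$ consists only of the involution $n/2$ of $\ZZZ_n$, so that $C/K$ is a perfect code in $\Cay(\ZZZ_n/K,(S/K)\setminus\{K\})$, a circulant of degree $pq-1$ with $S_0=S/K$; here \cite[Theorem 3.5]{DSLW16} gives the same congruence condition for $S/K$. In either subcase, since $pq$ divides $n/|K|$, the relation $\bar s\equiv\bar s'\pmod{pq}$ in $\ZZZ_n/K\cong\ZZZ_{n/|K|}$ is equivalent to $s\equiv s'\pmod{pq}$ in $\ZZZ_n$, and distinct $s,s'\in S$ give distinct $\bar s,\bar s'\in S/K$; hence $s\not\equiv s'\pmod{pq}$ for distinct $s,s'\in S$, completing the proof.

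The steps I expect to be most delicate are: (i) verifying that $N$ is closed under divisors, so that every intermediate quotient remains a good cyclic group with order in $N$ — this is a finite check over the six shapes listed in $N$; and (ii) the bookkeeping in the subcase $K\in S/K$, where one must pass from a total perfect code to a perfect code and confirm that the numerical hypotheses of \cite[Theorem 3.5]{DSLW16} ($\langle S'\rangle=\ZZZ_{n'}$, $|S_0'|<n'$, $\gcd(|S_0'|,n'/|S_0'|)=1$) all hold, which they do exactly because $m/|K|>1$ and $\gcd(pq,m)=1$. It is worth stating explicitly that goodness of the quotient rules out $S/K$ being aperiodic once $|C/K|>1$, so no genuine induction is required; alternatively the whole argument can be phrased as an induction on $n$ ranging over $N$, with Theorem \ref{thm:CTPCforPQSP} supplying the periodic branch.
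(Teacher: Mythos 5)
Your proof is correct and follows essentially the same route as the paper's: sufficiency from Lemma \ref{lem:suff}, and for necessity, goodness of $\ZZZ_n$ forces $C$ to be periodic, one quotients by $K=G_C$, uses Lemma \ref{lem:rmvperi} and goodness of the quotient to make $S/K$ periodic, applies Theorem \ref{thm:CTPCforPQSP} to the quotient circulant, and lifts the congruence back using $K\cap(S-S)=\{0\}$ and $pq\mid n/|K|$. The one substantive difference is that you explicitly treat the degenerate case $|K|=|C|$ and the subcase $K\in S/K$ (reducing the latter to a perfect-code statement via \cite[Theorem 3.5]{DSLW16}), whereas the paper dispatches both at once with the claim $S\cap H=\emptyset$, proved by exhibiting the two factorizations $h+(-h)$ and $(-h)+h$ of $0$; that argument tacitly assumes $h\neq -h$ and so does not literally cover $h=n/2$, which is exactly the element you isolate. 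Your extra bookkeeping is therefore not redundant caution but closes a small gap in the published argument, at the modest cost of importing one more external result.
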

	
\begin{proof}
Again, the sufficiency follows from Lemma \ref{lem:suff}. It remains to prove the necessity. 
	
Let $n = kpq$. Then $k \ge 2$ is coprime to $pq$. Suppose that $\Cay(\mathbb{Z}_n,S)$ admits a total perfect code $C$. Without loss of generality we may assume $0 \in C$. By Lemma \ref{lem:DirP}, $\mathbb{Z}_n = S \oplus C$, and by Lemma \ref{lem:DirPprop}, $|C| = n/|S| = n/pq = k$. Since $n \in N$, $\mathbb{Z}_n$ is a good cyclic group. Since $\mathbb{Z}_n = S \oplus C$ but $S$ is aperiodic, it follows that $C$ must be periodic. Denote by $H$ the subgroup of periods of $C$ in $\ZZZ_n$. Then $|H| > 1$ divides $|C| = k$. Since $0 \in C$, we have $H \subseteq C$. By Lemma \ref{lem:perodiv}, $\mathbb{Z}_n / H = (S/H) \oplus (C/H)$ and $H \cap (S-S) = \{0\}$. The latter implies $H+s \neq H+s'$ for distinct $s, s' \in S$ and thus $|S/H| = |S| = pq$. Set $\ell = n/|H|$, $S / H = \{H + t_1, H+t_2, \ldots, H+t_{pq}\}$ and $T = \{t_1, t_2, \ldots, t_{pq}\}$, where $0 \leq t_i \leq \ell - 1$ for each $i$.

We claim that $H \not \in S/H$, or equivalently, $S \cap H = \emptyset$. Suppose otherwise. Let $h \in S \cap H$. Since $-S = S$ and $H \subseteq C$, we have $h \in S \cap C$ and $-h \in S \cap C$. So $h + (-h)$ and $(-h) + h$ are two different ways to express $0$ as the sum of an element of $S$ and an element of $C$, but this contradicts the fact that $\mathbb{Z}_n = S \oplus C$. Thus $H \not \in S/H$. Since $\mathbb{Z}_n / H = (S/H) \oplus (C/H)$, by Lemma \ref{lem:DirP}, we obtain that $C/H$ is a total perfect code in $\Cay(\mathbb{Z}_n / H, S/H)$. Since $\phi: \mathbb{Z}_n / H \rightarrow \mathbb{Z}_{\ell}$ defined by $\phi(H+x) = x$ is a group isomorphism, it follows that $\Cay(\mathbb{Z}_{\ell}, T)$ admits a total perfect code. Since $\Cay(\mathbb{Z}_n, S)$ is connected, so are $\Cay(\mathbb{Z}_n / H, S/H)$ and $\Cay(\mathbb{Z}_{\ell}, T)$. Since $n \in \{p^{k}, p^{k}q, p^{2}q^{2}, pqr, p^{2}qr, pqrs: \text{ $p, q, r, s$ are primes and $k \ge 1$}\}$, we have $\ell \in \{p^{k}, p^{k}q, p^{2}q^{2}, pqr, p^{2}qr, pqrs: \text{ $p, q, r, s$ are primes and $k \ge 1$}\}$ and hence $\mathbb{Z}_n / H$ is a good group. Since $\mathbb{Z}_n / H = (S/H) \oplus (C/H)$ and $C/H$ is aperiodic by Lemma \ref{lem:rmvperi}, it follows that $S/H$ must be a periodic subset of $\mathbb{Z}_n / H$. So there exists $H+b \in \mathbb{Z}_n / H$ with $b \not \in H$ such that $(S/H) + (H+b) = S/H$. Thus, $T+b = T$, and hence $b$ is a period of $T$ in $\mathbb{Z}_{\ell}$ but is not the identity element of $\mathbb{Z}_{\ell}$ due to the isomorphism $\phi$. In other words, $T$ is periodic. Note that $\ell = (k/|H|)(pq)$ and $k/|H|$ and $|T| = pq$ are coprime. So we can apply Theorem \ref{thm:CTPCforPQSP} to $\Cay(\mathbb{Z}_{\ell}, T)$ to obtain $t_i \not \equiv t_j$ (mod $pq$) for distinct $t_i, t_j \in T$.
	
For distinct $s, s' \in S$, there exist $t, t' \in T$ such that $s \in H+t$ and $s' \in H+t'$. If $t = t'$, then $s - s' \in H$, which contradicts the fact that $H \cap (S-S) = \{0\}$. Hence $t \neq t'$ and $s - s' = c\ell + t - t'$ for some integer $c$ with $|c| \leq |H| - 1$. Since $t \not \equiv t'$ (mod $pq$) and $pq$ is a divisor of $\ell$, we obtain that $s - s' \neq 0$ (mod $pq$), as required.
\qed
\end{proof}

The following result uses Lemma \ref{lem:CTPCforPQSA} in its proof, and on the other hand it covers Lemma \ref{lem:CTPCforPQSA} as a special case. At present it is unknown whether the condition $n \in \{p^{k}, p^{k}q, p^{2}q^{2}, pqr, p^{2}qr, pqrs: \text{ $p, q, r, s$ are primes and $k \ge 1$}\}$ can be removed from this result.  

\begin{theorem}
\label{thm:CTPCforNbar} 
Let $n \in \{p^{k}, p^{k}q, p^{2}q^{2}, pqr, p^{2}qr, pqrs: \text{ $p, q, r, s$ are primes and $k \ge 1$}\}$, and let $S$ be an inverse-closed subset of $\ZZZ_n \setminus \{0\}$ such that $\la S \ra = \ZZZ_n$, $|S|$ divides $n$, and $|S|$ and $n/|S|$ are coprime. Then $\Cay(\mathbb{Z}_n,S)$ admits a total perfect code if and only if $s \not \equiv s'$ (mod $|S|$) for distinct $s, s' \in S$.
\end{theorem}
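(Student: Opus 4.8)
The sufficiency is immediate from Lemma~\ref{lem:suff}, so the plan is to prove the necessity by induction on $n$ (within the family $N$). Suppose $\Cay(\ZZZ_n, S)$ admits a total perfect code $C$; after translating we may assume $0 \in C$, so $\ZZZ_n = S \oplus C$ by Lemma~\ref{lem:DirP}. Since $C$ induces a matching, $|C| = n/|S|$ is even, and as $\gcd(|S|, n/|S|) = 1$ this forces $|S|$ to be odd; being inverse-closed of odd size, $S$ then contains the unique involution $n/2$ of $\ZZZ_n$. Because $n \in N$, the group $\ZZZ_n$ is good, so at least one of $S$ and $C$ is periodic. If $|S| = p^l$ is a prime power, then $p^l$ is the exact power of $p$ dividing $n$ and Theorem~\ref{thm:ATPCforPl} (taken with $d = 1$) gives the conclusion; so from now on we assume $|S|$ has at least two distinct prime divisors and reduce to a smaller circulant.

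\emph{Case 1: $S$ is periodic.} Set $H = G_S$, so $|H| > 1$ divides $|S|$, and $S \cap H = \emptyset$ because $S$ is a union of cosets of $H$ not containing $0$. By Lemma~\ref{lem:perodiv}, $\ZZZ_n/H = (S/H) \oplus (C/H)$ with $H \notin S/H$, so by Lemma~\ref{lem:DirP}(b) $C/H$ is a total perfect code in $\Cay(\ZZZ_n/H, S/H) \cong \Cay(\ZZZ_{n/|H|}, T)$ for the corresponding $T$. If $|S/H| = 1$, then $|S| = |H| = |G_S|$ and Lemma~\ref{lem:shortpf} applied to $S$ directly yields $s \not\equiv s'$ (mod $|S|$) for distinct $s, s' \in S$. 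If $|S/H| \ge 2$, then $n/|H| \in N$ ($N$ being closed under divisors), $\la T\ra = \ZZZ_{n/|H|}$, $|T|$ divides $n/|H|$, $\gcd(|T|, (n/|H|)/|T|) = 1$, and $n/|H| < n$, so the inductive hypothesis gives $t \not\equiv t'$ (mod $|T|$) for distinct $t, t' \in T$. One then lifts this to $S$: for $s, s' \in S$ in distinct cosets of $H$ it follows because $|T| = |S|/|H|$ divides both $n/|H|$ and $|S|$, and for $s, s'$ in a common coset one has $s - s' = c\,(n/|H|)$ with $0 < |c| < |H|$, so $\gcd(|S|, n/|S|) = 1$ forces $|S| \nmid (s - s')$.

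\emph{Case 2: $S$ is aperiodic, hence $C$ is periodic.} Set $H = G_C$; then $H \subseteq C$, and applying Lemma~\ref{lem:perodiv} to $\ZZZ_n = C \oplus S$ gives $H \cap (S - S) = \{0\}$ and $\ZZZ_n/H = (S/H) \oplus (C/H)$, so in particular $|S/H| = |S|$. If $C = H$, then $C$ is a subgroup and Theorem~\ref{thm:CTPCforsubgp} applies. Otherwise $|H| < |C|$, so $|S| = |S/H| < n/|H|$. Since $0 \in C$ has exactly one neighbour in $C$, we have $|S \cap H| \le |S \cap C| = 1$, and inverse-closedness forces $S \cap H = \emptyset$ or $S \cap H = \{n/2\}$. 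If $n/2 \notin H$, then $S \cap H = \emptyset$, so $C/H$ is a total perfect code in $\Cay(\ZZZ_{n/|H|}, T)$, the hypotheses hold for this graph with $n/|H| < n$, and the inductive hypothesis together with $|S/H| = |S|$ and the divisibility $|S| \mid (n/|H|)$ lifts the conclusion to $S$. If $n/2 \in H$, then $n/2 \in S \cap H$, so $H \in S/H$; hence $\ZZZ_n/H = (S/H) \oplus (C/H)$ exhibits $C/H$ as a \emph{perfect code} in $\Cay(\ZZZ_{n/|H|}, (S/H) \setminus \{H\})$ by Lemma~\ref{lem:DirP}(a). As $|(S/H) \setminus \{H\}| + 1 = |S| < n/|H|$, $n/|H| \in N$, and the relevant sizes are coprime, the classification of perfect codes in such circulants, \cite[Theorem~3.5]{DSLW16}, gives $a \not\equiv a'$ (mod $|S|$) for distinct $a, a' \in S/H$, which lifts to $S$ as before.

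I expect the crux to be the final subcase of Case~2: when $C$ is periodic with $n/2$ in its subgroup of periods, the natural quotient necessarily converts the total-perfect-code problem into a genuine perfect-code problem, so the induction does not stay inside the class of total perfect codes and one must invoke the perfect-code counterpart \cite[Theorem~3.5]{DSLW16}. The other recurring technicality is transporting the condition ``$s \not\equiv s'$ (mod $|S|$)'' across a quotient by a subgroup of order coprime to $|S|$: this is precisely where the hypothesis $\gcd(|S|, n/|S|) = 1$ is indispensable, both to exclude $s \equiv s'$ (mod $|S|$) for two elements of $S$ lying in one coset of the period subgroup and to carry congruences through the quotient map; it is also why the induction must be on $n$ rather than on $|S|$, since in Case~2 with $n/2 \notin H$ the reduced connection set still has size $|S|$.
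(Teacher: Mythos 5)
Your argument is correct, but it is organized quite differently from the paper's. The paper proves the necessity by running through the six admissible shapes of $n$ and, within each, the possible values of $|S|$, reducing every configuration to a previously established result (\cite[Theorems 1.3 and 1.4]{FHZ17}, Theorem~\ref{thm:CTPCforPdA}, Theorem~\ref{thm:CTPCforPQSP}, Lemma~\ref{lem:CTPCforPQSA}, Lemma~\ref{lem:shortpf}); the quotient-and-lift computation you use appears there only inside Cases 4 and 5. You instead run a single induction on $n$ organized around the good-group dichotomy ($S$ periodic versus $C$ periodic), which absorbs Theorem~\ref{thm:CTPCforPQSP} and Lemma~\ref{lem:CTPCforPQSA} as special cases and lets the closure of $N$ under divisors do the combinatorial work the paper does by hand; I checked the coprimality and divisibility transfers through the quotient, both lifting computations, and the applicability of the inductive hypothesis to $\ZZZ_{n/|H|}$, and they are all sound. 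The price of your route is the extra subcase of Case~2 where $n/2 \in G_C$: there the quotient converts the total-perfect-code problem into a genuine perfect-code problem and you must import \cite[Theorem~3.5]{DSLW16}, which the paper's proof of this theorem never invokes. It is worth noting that your treatment of that subcase is in fact more careful than the corresponding step in the paper's proof of Lemma~\ref{lem:CTPCforPQSA}, where the claim $S \cap G_C = \emptyset$ is justified by exhibiting ``two different ways to express $0$''--- an argument that tacitly assumes $h \neq -h$ and therefore does not by itself exclude $n/2 \in S \cap G_C$; your analysis shows explicitly how that possibility is to be handled.
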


\begin{proof} 
Again, by Lemma \ref{lem:suff}, it suffices to prove the necessity. Let $n$ and $S$ be as in the theorem. Suppose that $\Cay(\mathbb{Z}_n,S)$ admits a total perfect code $C$. Then $\mathbb{Z}_n = S \oplus C$ by Lemma \ref{lem:DirP}. We aim to prove
\begin{equation}
\label{eq:ssprime}
s \not \equiv s' \text{ (mod $|S|$) for distinct $s, s' \in S$.}
\end{equation}
Since $|S|$ and $n/|S|$ are coprime, $n$ is not a prime power for otherwise we would have $|S| = 1$ and so $\Cay(\mathbb{Z}_n,S)$ is disconnected, which contradicts our assumption. Since 
$$
n \in \{p^{k}, p^{k}q, p^{2}q^{2}, pqr, p^{2}qr, pqrs: \text{ $p, q, r, s$ are primes and $k \ge 1$}\},
$$ 
one of the following cases occurs, and in each case we can easily work out possible values of $|S|$ using the assumption that $|S|$ and $n/|S|$ are coprime. 

\medskip	
\textsf{Case 1.} $n = p^kq$, where $p$ and $q$ are distinct primes and $k \geq 1$.

In this case we have $|S| \in \{p^k, q\}$ and \eqref{eq:ssprime} follows from \cite[Theorem 1.4]{FHZ17}.

\medskip	
\textsf{Case 2.} $n = p^2q^2$, where $p$ and $q$ are distinct primes.

In this case we have $|S| \in \{p^2, q^2\}$ and again \eqref{eq:ssprime} follows from \cite[Theorem 1.4]{FHZ17}.
	
\medskip
\textsf{Case 3.} $n = pqr$, where $p$, $q$ and $r$ are distinct primes.

In this case we have $|S| \in \{p, q, r, pq, pr, qr\}$ and \eqref{eq:ssprime} follows from \cite[Theorem 1.3]{FHZ17}, Theorem \ref{thm:CTPCforPQSP} or Lemma \ref{lem:CTPCforPQSA}.
	
\medskip
\textsf{Case 4.} $n = p^2qr$, where $p$, $q$  and $r$ are distinct primes.

In this case we have $|S| \in \{q, r, qr, p^2, p^2q, p^2r\}$. If $|S| \in \{q,r\}$, then we obtain \eqref{eq:ssprime} from  \cite[Theorem 1.3]{FHZ17}. If $|S| = p^2$, then \eqref{eq:ssprime} follows from \cite[Theorem 1.4]{FHZ17}. If $|S| = qr$, then \eqref{eq:ssprime} follows from Theorem \ref{thm:CTPCforPQSP} or Lemma \ref{lem:CTPCforPQSA}.
	
If $|S| \in \{p^2q, p^2r\}$ and $S$ is aperiodic, then by Lemma \ref{lem:DirPprop}, $n = |S||C| = r |S|$ or $q |S|$. So \eqref{eq:ssprime} follows from Theorem \ref{thm:CTPCforPdA}.
	
Suppose that $|S| \in \{p^2q, p^2r\}$ and $S$ is periodic. Denote by $H$ the subgroup of periods of $S$ in $\ZZZ_n$. Then $H \not \in S/H$, or equivalently, $S \cap H = \emptyset$, for otherwise, say, $h \in S \cap H$, we would have $-h \in S \cap H$ and hence $0 = (-h) + h \in S + h = S$, a contradiction. If $|H| = |S|$, then \eqref{eq:ssprime} follows from Lemma \ref{lem:shortpf}. Assume $|H| \ne |S|$ in the sequel. Then $|H|$ is a proper divisor of $|S|$ and hence $|H| \in \{p, p^2, q, pq, r, pr\}$. Set $k = |S|/|H|$ and $\ell = n/|H|$. Set $S / H = \{H+t_1, H+t_2, \ldots, H+t_{k}\}$ and $T = \{t_1, t_2, \ldots, t_{k}\}$, where $0 < t_i < \ell$. Since $\mathbb{Z}_n = S \oplus C$, by Lemma \ref{lem:perodiv}, $\mathbb{Z}_n / H = (S / H) \oplus (C/H)$ and $H \cap (C - C) = \{0\}$. Thus, by Lemma \ref{lem:DirP}, $C/H$ is a total perfect code in $\Cay(\mathbb{Z}_n / H, S / H)$. Since $\phi: \mathbb{Z}_n / H \rightarrow \mathbb{Z}_{\ell}$ defined by $\phi(H+x) = x$, $0 \le x < \ell$, is a group isomorphism, it follows that $\Cay(\mathbb{Z}_{\ell}, T)$ admits a total perfect code. Since $|T| = k \in \{p, p^2, q, r, pq, pr\}$, by \cite[Theorem 1.4]{FHZ17}, Theorem \ref{thm:CTPCforPQSP} or Lemma \ref{lem:CTPCforPQSA}, we then obtain that $t \not \equiv t'$ (mod $k$) for distinct $t, t' \in T$. For distinct $s, s' \in S$, there exist $t, t' \in T$ such that $s \in H+t$ and $s' \in H+t'$. If $t = t'$, then $s - s' = c\ell$ for some integer $c$ with $0 < |c| \leq |H| - 1$. Since $|H| > 1$ divides $|S|$ and $|S|$ is coprime to $n/|S|$, we obtain that $|S|$ is not a divisor of $c \ell$ and hence $s \not \equiv s'$ (mod $|S|$). If $t \neq t'$, then $s - s' = c\ell + t - t'$ for some integer $c$ with $|c| \leq |H| - 1$. Since $\ell = k (n/|H|)$ and $t \not \equiv t'$ (mod $k$), we have $s \not \equiv s'$ (mod $k$). Since $|S| = k |H|$, we then have $s \not \equiv s'$ (mod $|S|$).  

\medskip	
\textsf{Case 5.} $n = pqrs$, where $p$, $q$, $r$ and $s$ are distinct primes.

If $|S| \in \{p, q, r, s\}$, then we obtain \eqref{eq:ssprime} from \cite[Theorem 1.3]{FHZ17}. If $|S|$ is the product of any two of $p, q, r$ and $s$, then by Theorem \ref{thm:CTPCforPQSP} or Lemma \ref{lem:CTPCforPQSA} we obtain \eqref{eq:ssprime}. It remains to consider the case where $|S|$ is the product of three of $p, q, r$ and $s$. Without loss of generality we may assume that $|S| = qrs$. Then $|C| = n/|S|  = p$ by Lemma \ref{lem:DirPprop}. If $S$ is aperiodic, then \eqref{eq:ssprime} follows from Theorem \ref{thm:CTPCforPdA}. In the sequel we assume that $S$ is periodic; that is, the subgroup of periods $H$ of $S$ in $\ZZZ_n$ has order $|H| > 1$. Similarly to Case 4, we can see that $H \not \in S/H$. If $|H| = |S|$, then we obtain \eqref{eq:ssprime} from Lemma \ref{lem:shortpf}. Assume $|H| \ne |S|$ in the rest of the proof. Then $|H|$ is a proper divisor of $|S|$. Set $k = |S|/|H|$ and $\ell = n/|H|$. Set $S / H = \{H+t_1, H+t_2, \ldots, H+t_{k}\}$ and $T = \{t_1, t_2, \ldots, t_{k}\}$, where $0 < t_i < \ell$. Since $\mathbb{Z}_n = S \oplus C$, by Lemma \ref{lem:perodiv}, $\mathbb{Z}_n / H = (S / H) \oplus (C/H)$ and $H \cap (C - C) = \{0\}$. Hence, by Lemma \ref{lem:DirP}, $C/H$ is a total perfect code in $\Cay(\mathbb{Z}_n / H, S / H)$. Since $\phi: \mathbb{Z}_n / H \rightarrow \mathbb{Z}_{\ell}$ defined by $\phi(H+x) = x$, $0 \le x < \ell$, is a group isomorphism, it follows that $\Cay(\mathbb{Z}_{\ell}, T)$ admits a total perfect code. Since $|T| = k \in \{q, r, s, qr, qs, rs\}$, by \cite[Theorem 1.4]{FHZ17}, Theorem \ref{thm:CTPCforPQSP} or Lemma \ref{lem:CTPCforPQSA}, we have $t \not \equiv t'$ (mod $k$) for distinct $t, t' \in T$. Based on this we obtain $s \not \equiv s'$ (mod $|S|$) for distinct $s, s' \in S$ in exactly the same way as in Case 4. 
\qed
\end{proof} 	

\smallskip
\noindent \textbf{Acknowledgements}~~We are grateful to the anonymous referees for their careful reading of our manuscript and their helpful comments and suggestions. The third author was supported by a Discovery Project (DP250104965) of the Australian Research Council.

\section*{Declarations}

\noindent \textbf{Conflict of interest}~~All authors declare that they have no conflicts of interest to
disclose.

{\small

}

\end{document}